\newtheorem{theorem}{Theorem}
\newtheorem{lemma}{Lemma}
\newtheorem{corollary}{Corollary}
\newtheorem{definition}{Definition}
\newtheorem{remark}{Remark}
\newtheorem{example}{Example}
\DeclareMathOperator\Int{int}
\def\B{\mathbb B}
\def\C{\mathbb C} 
\def\N{\mathbb N}
\def\R{\mathbb R}
\def\T{\mathbb T}
\def\X{\mathbb X}
\def\cA{\mathcal A}
\def\cH{\mathcal H}
\def\cK{\mathcal K}
\def\cO{\mathcal O}
\def\cR{\mathcal R}
\def\scrB{\mathscr B} 
\def\scrC{\mathscr C} 
\def\scrD{\mathscr D} 
\def\scrE{\mathscr E} 
\def\scrH{\mathscr H} 
\def\scrL{\mathscr L} 
\def\scrM{\mathscr M}
\def\re{{\rm Re}}
\begin{document}

\title[Evolution Equations on Conic Manifolds]{Evolution Equations on Manifolds with Conical Singularities}

\author{Elmar Schrohe} 
\address{E. Schrohe, Leibniz University Hannover, Institute of Analysis, Welfengarten 1, 30167 Hannover, Germany }
\email{schrohe@math.uni-hannover.de} 

\begin{abstract}
This is an introduction to the analysis of nonlinear evolution  equations on manifolds with conical singularities via maximal regularity techniques. We address the specific difficulties due to the singularities, in particular the choice of extensions of the conic Laplacian that guarantee the existence of a bounded $H_\infty$-calculus. We introduce the relevant technical tools and survey, as main examples, applications to the porous medium equation, the fractional porous medium equation, the Yamabe flow, and the Cahn-Hilliard equation. 
\end{abstract}

\keywords{Conical singularity, cone differential operators, porous medium equation, Cahn-Hilliard equation }
\subjclass[2020]{35B40, 35C20, 58J40, 35K59, 35K65, 35R01}

\maketitle
\tableofcontents


\date{\today}

\section{Introduction} 

Understanding the interplay of analysis and geometry has always been  a central issue in mathematics. 
 In the present article we consider nonlinear parabolic equations on manifolds that have conical singularities, see Section \ref{sect.2.1} for the precise definition. On the one hand we are interested in the existence of solutions in this context, on the other hand we would like to know how the geometry of the manifold near the singularities is reflected in the solutions.  

Concerning the existence of solutions, we rely on maximal regularity techniques. 
Maximal regularity has become an indispensable tool in the analysis of nonlinear parabolic problems. In connection with the Cl\'ement-Li theorem  it guarantees the existence of short-time solutions to initial value problems of the form 
\begin{eqnarray}
\label{1.1}
\partial_t u + A(u)u =F(t,u), \quad u(0) = u_0,
\end{eqnarray}
under suitable conditions on the right hand side $F$ and the initial value $u_0$, provided that  $A(u(t))$ is, for each $t$, a closed  operator in the Banach space $\X_0$ with fixed domain $\scrD(A(u(t)))=\X_{1}$, where $\X_1$ is densely embedded in $\X_0$ and  $A(u_0)$ has maximal regularity,   see Theorem \ref{ThmCL}, below. 

A first issue are the closed extensions of cone differential operators. For elliptic operators on (domains in) $\R^n$ or on closed manifolds, the choice of the spaces $\X_0$ and $\X_1$ is clear in general, and maximal regularity has been established in many instances. On manifolds with conical singularities, the situation is more subtle. In general, the domain of the maximal extension differs from that of the minimal by a nontrivial finite-dimensional space. 
We review the corresponding results and show, how this finite-dimensional space can be determined conveniently.  In view of the applications we have in mind, our focus lies on the cone Laplacian. 
Here we show, moreover, how extensions can be chosen in such a way that the Laplacian admits a bounded $H_\infty$-calculus. 

We then consider examples, namely the porous medium equation (PME), the fractional PME, the Yamabe problem, and the Cahn-Hilliard equation.

The PME, also called nonlinear heat equation, models -- among other things -- the flow of a gas in a porous medium, heat transfer or groundwater flow. It takes the form 
\begin{eqnarray*}
\partial_t u - \Delta(u^m) = F(t,u), \quad u(0) = u_0,
\end{eqnarray*}
where $m>0$, $F(t,u)$ is a source term, possibly also depending on $\nabla u$, and $u_0$  an initial value.  
For $m=1$ we recover the heat equation. 
The PME  is easily brought into the form \eqref{1.1} with $A(u) = - mu^{m-1}\Delta$. Maximal regularity for $A(u_0)$ can be derived from the bounded $H_\infty$-calculus of $c-\Delta$, $c>0$, via perturbation theory for $\cR$-sectoriality. 
We show how the existence of short time solutions can be inferred from the Cl\'ement-Li theorem. Moreover, we sketch the proof of the existence of global solutions. We follow here \cite{RS2} and \cite{RS3}. 
We focus on strictly positive initial values $u_0$, as this leads to maximal regularity solutions. For non-negative $u_0$ a concept of weak solutions has been suggested by Aronson and Peletier; it can also be applied in the conically degenerate situation. 

We then address the question, how the shape of the tip is reflected in the solution. Already in the results mentioned above, the geometry comes in, as the possible choice of the domain is intimately related to the first non-zero eigenvalue of the Laplacian on the cross-section. 
In order to see more of this phenomenon, we may consider a (strictly positive) initial value $u_0$ of the form 
$$u_0(x) = c_0+\cO(x^\infty),$$   
where $c_0$ is a positive constant and $x$ is the distance to the tip. We show that the only asymptotics that the solution $u$ can develop are governed by the spectrum of the Laplacian on the cross-section of the cone.  

In \cite{RSh1,RSh2} Roidos and Shao studied the fractional PME, where $-\underline\Delta$ is replaced by $(-\Delta)^\sigma$, $0<\sigma<1$, which requires new methods. 
Moreover,  Roidos \cite{RY} was able to find a solution to the Yamabe flow equation on conic manifolds using the techniques developed for the PME.      


The Cahn-Hilliard equation is a phase-field or diffuse interface equation. It models the separation of  phases in a binary mixture, for example a two-component alloy, but many other applications are encountered.
One finds the equation stated in various forms. We consider here the initial value problem 
\begin{eqnarray*}
\partial_t u +\Delta^2u +\Delta(u-u^3) =0, \quad u(0) = u_0.
\end{eqnarray*}
Here $u$ denotes the concentration difference of the two phases. The sets where $u=\pm1$ correspond to areas of pure phases.  

The Cahn-Hilliard equation is semi-linear  and therefore of a simpler type than the PME. It might be solved by other methods, but maximal regularity techniques provide an elegant tool to derive the existence of short time solutions. We first focus on the results  in \cite{RS4} and \cite{RS1} that establish the existence of short time solutions and show how they can be derived in a simpler way. The new feature here is the analysis of the bi-Laplacian.  

Lopes and Roidos in \cite{LR1} and \cite{LR2} were able to prove the long time existence of solutions in dimensions 2 and 3. Moreover, they could define a semi-flow on the solutions of average zero with a global attractor.


The structure of this article is as follows. In Section 2, we will introduce the basics: manifolds with conical singularities, cone Sobolev spaces, and conically degenerate differential operators. The principal example is the Laplace-Beltrami operator $\Delta$  with respect to a conically degenerate metric. Moreover, we will recall the two symbols associated to a cone differential operator. They define ellipticity and characterize the Fredholm property between Sobolev spaces, see also \cite{S24}. 

Section 3 is devoted to the study of the possible closed extensions of a cone differential operator $A$ as an unbounded operator  in a cone Sobolev space $\cH^{s,\gamma}_p(\B)$, $s,\gamma\in \R$, $1<p<\infty$. Here we will also encounter the so-called model cone operator $\widehat A$ associated with $A$. It will play an important role in the proof of the existence of a bounded $H_\infty$-calculus  for $\Delta$. Theorem \ref{T3.4} recalls the structure of the domains of the maximal extensions of $A$ and $\widehat A$. As a consequence one obtain the structure of the domains of {\em arbitrary} closed extensions of $A$ and $\widehat  A$ in Corollary \ref{3.5}.  As an example we present the domains of the maximal closed extensions of $\Delta$ and $\widehat \Delta$ that have been determined in \cite{RS5} following the analysis in \cite{SS2} and \cite{GKM2}.

In Section 4 we sketch the concepts of bounded $H_\infty$-calculus, bounded imaginary powers and $\cR$-sectoriality. Theorem \ref{HDelta} goes back to \cite{SS2}. It shows how a good choice of the domain guarantees the existence of a bounded $H_\infty$-calculus and thus maximal regularity. This is exemplified in the case of the Laplacian. 

The porous medium equation (including subsections on the fractional PME and on the Yamabe problem) is treated in  Section 5.  Section 6 is devoted to  the  initial value problem for the Cahn-Hilliard equation.\medskip

\noindent{\bf Relation to other work}
This article focuses on the work of  Nikolaos Roidos and the author\footnote{supported by the DFG Priority Program `Geometry at Infinity' under Grant SCHR 319/9-1} and  their collaborators P. Lopes, J. Seiler and Y. Shao. Two more articles of interest that are not treated here are \cite{Ro1} by Roidos on complex powers and \cite{RoSa} by Roidos and Savas-Halilaj on the curve shortening flow in the conic setting. They also use maximal regularity techniques. 

Apart from these articles, there is a   wealth of literature on evolution equations on singular manifolds, too much to cite. A few articles that are similar in spirit and contain many references to further work should be mentioned. 
In \cite{IMS} and   \cite{MRS15}, Isenberg, Mazzeo and Sesum and Mazzeo, Rubinstein and Sesum  considered  the Ricci flow on asymptotically conical surfaces and surfaces with conical singularities, respectively. 
Grieser, Held, Uecker and Vertman \cite{GHUV} considered the Cahn-Hilliard problem with interface parameter $\varepsilon>0$  for phase transitions on manifolds with conical singularities and studied the limit as $\varepsilon \to 0$ using $\Gamma$-convergence. 
Bahuaud and Vertman \cite{BV} proved long time existence of the Yamabe flow, and   Vertman \cite{Ver} established short time (under certain conditions also long time) existence of the Ricci-deTurck flow on manifolds with incomplete edges.
Chen  et al. considered semi-linear parabolic equations with cone, edge and corner degeneracy  and established results on global existence and finite time blow-up, see \cite{CL0,CL1,CL2}.
Grillo and Muratori with various collaborators studied in a series of papers the porous medium equation on complete noncompact Riemannian manifolds, e.g. hyperbolic space or Cartan-Hadamard manifolds. See e.g. \cite{GMP18}, or \cite{V15} for related work. Also the fractional porous medium equation has been studied in this setting, e.g. by Berchio, Bonforte,  Grillo and  Muratori in \cite{BBGM24}.

The cone calculus is closely related to Melrose's $b$-calculus \cite{Mel93}. At various places I have pointed out similarities. For a comparison of both approaches see also \cite{LauterSeiler}. 

\subsection*{Acknowledgement} I thank the referees for many valuable comments.

\section{Conical Singularities and Cone Sobolev Spaces}

\subsection{Manifolds with Conical Singularities} \label{sect.2.1}


An $(n+1)$-dimensional  manifold with conical singularities is a topological space $B$ that is a smooth manifold of dimension $n+1$  outside a finite subset $D\subseteq B$  of isolated points.
Moreover,  near every  $d\in D$, the space $B$ has the structure of a cone. More precisely,  there exist an open neighborhood $U_d\subseteq B$ of $d$ and a homeomorphism $\phi_d: U_d\to ({[0,1}[\times Y_d)/(\{0\}\times Y_d)$ that restricts to a diffeomorphism $\phi_d: U_d\setminus \{d\}\to {]0,1[}\times Y_d$. The space $Y_d$, the cross-section of the cone, is a smooth manifold of dimension $n$. 

If $\phi_d':   U_d\to ([0,1[\times Y_d)/(\{0\}\times Y_d)$ is another map with the above properties, we require that $\phi_d'\phi_d^{-1}: {]0,1[}\times Y_d\to {]0,1[}\times Y_d$ extends to a diffeomorphism   ${]-1,1[}\times Y_d\to {]-1,1[}\times Y_d$.

This assumption allows us to blow up the singularities. We obtain an $(n+1)$-dimensional manifold $\B$ with boundary $\partial \B =\bigsqcup_{d\in D} Y_d $, which is analytically a much simpler object to work with.  In the sequel, we will assume $\B$ to be compact and fix the notation 
\begin{eqnarray}
\dim \B=n+1.
\end{eqnarray}

\begin{remark}
One can also define manifolds with boundary and conical singularities. In this case, $B\setminus D$ and the spaces $Y_d$ are manifolds with boundary of dimensions $n+1$ and $n$, respectively. 
\end{remark} 

In order  to model the conical singularities, we endow $\B$ with a degenerate Riemannian metric $g$. We assume $g$ to be a smooth Riemannian metric away from the boundary. Near the boundary, we fix a collar neighborhood ${[0,1[}\times \partial \B$  with coordinates $(x,y)$ and suppose that  $g$ is of the form 
\begin{eqnarray}
\label{eq.g}
g = dx^2 + x^2h(x),
\end{eqnarray}
where ${[0,1[}\ni x\mapsto h(x)$ is a smooth family of (non-degenerate) Riemannian metrics on $\partial \B$. We will say that $\B$ has straight conical singularities, if $h$ is independent of $x$. 

\subsection{Cone Differential Operators}
\begin{definition}
A differential operator $A$ of order $\mu$ on $\B$ is called conically degenerate or a cone differential operator, if it is a differential operator of order $\mu$ with smooth coefficients on the interior $\Int \B$ of $\B$, and, near the boundary,  can be written in the form 
\begin{eqnarray}
\label{eq.conedeg}
 A = x^{-\mu}\; \sum_{j=0}^\mu a_j(x)(-x\partial_x)^j,
\end{eqnarray}
where ${[0,1[}\ni x\mapsto a_j(x)$ is a smooth family of differential operators of order $\mu-j$ on $\partial \B$, $j=0,\ldots,\mu$. 
\end{definition} 

\begin{example} 
\label{Laplace}
The Laplace-Beltrami operator with respect to the metric $g$ is  
\begin{eqnarray}
\label{eq.Delta}
\Delta = x^{-2} \left((-x\partial_x)^2 -(n-1 +H(x))(-x\partial_x) + \Delta_{h(x)}  \right)
\end{eqnarray}
and thus a cone differential operator of order $2$. Here, $\Delta_{h(x)}$ is the Laplace-Beltrami operator on $\partial \B$ with respect to the metric $h(x)$, and 
\begin{eqnarray}
\label{eq.H}
H(x)= \frac x2 \frac{\partial_x(\det h(x))}{\det h(x)} = \mathcal O(x). 
\end{eqnarray}
\end{example} 

\begin{definition}
To a  cone differential operator $A$ as in \eqref{eq.conedeg} we associate two symbols:
\begin{enumerate}\renewcommand{\labelenumi}{(\roman{enumi})}
\item The (standard) principal pseudodifferential symbol $\sigma^\mu_\psi (A) $. Near $\partial \B$ it is  of the form $$\sigma^\mu_\psi (A)(x,y,\xi,\eta) = \sum_{j=0}^\mu \sigma_\psi^{\mu-j}(a_j(x))(y,\eta) (-ix\xi)^j.
$$  
with  the principal symbol $\sigma_\psi^{\mu-j}(a_j(x))$ of $a_j(x)$. 
Close to $\partial \B$ we also define the rescaled principal symbol: 
$$x^\mu \sigma_\psi^\mu(A)(x,y,\xi/x,\eta)
= \sum_{j=0}^\mu \sigma_\psi^{\mu-j}(a_j(x)) (y,\eta) (-i\xi)^j.$$   
\item The {\em conormal symbol} or {\em principal Mellin symbol}  of $A$  is the operator-valued polynomial $\sigma_M(A) :\mathbb C\to \scrL(H^\mu(\partial \B), L^2(\partial \B))$ defined by \footnote{This is essentially the {\em indicial family} in \cite[(5.7)]{Mel93}. While $z$ here corresponds to $-x\partial_x$, the variable $\lambda$ in \cite[(5.7)]{Mel93} corresponds to $xD_x$. The role played by the real part of $z$, e.g., in Definition \ref{2.15}, below, is therefore played by the imaginary part of $\lambda$ in \cite{Mel93}.}
$$\sigma_M(A)(z) = \sum_{j=0}^\mu a_j(0) z^j.$$
\end{enumerate} 
\end{definition}

\begin{example}For the Laplace-Beltrami operator with respect to the metric $g$ in \eqref{eq.g} we obtain near $\partial \B$:
\begin{eqnarray}
\label{2.6.1}
\sigma_\psi^2(\Delta)(x,y,\xi,\eta)  = x^{-2} \left( (-ix\xi)^2 +\sigma_\psi^2(\Delta_{h(x)})\right);
\end{eqnarray}
the rescaled version is
\begin{eqnarray}
\label{2.6.2}
x^2\sigma_\psi^2(\Delta)(x,y,\xi/x, \eta)  =  -\xi^2 - \|\eta\|^2_{h(x)},
\end{eqnarray}
where $\|\eta\|_{h(x)}$ is the norm of  $\eta\in T^*_y\partial \B$ induced by the metric $h(x)$. 

The principal Mellin symbol is 
\begin{eqnarray}
\label{2.6.3}
\sigma^2_M(\Delta)(z) = z^2 -(n-1)z+\Delta_{h(0)}.
\end{eqnarray}
Note that $H$ does not appear, since $H(0)=0$.  
\end{example}

\subsection{Cone Sobolev Spaces}  

The definition of cone differential operators suggests that the right choice of Sobolev spaces on manifolds with conical singularities should take into account the fact that derivatives with respect to $x$ always appear in the form $x\partial_x$ and that one  has powers of $x$ as weight factors. For $k\in \N_0$ and $\gamma\in \R$ we therefore define the space $\cH^{k,\gamma}_p(\B)$ by \footnote{For $p=2$, a corresponding family of spaces $\rho^a H^m_b(X;{{}^b\Omega}^\frac12)$ was 
introduced in \cite[(5.44)]{Mel93}.} 
\begin{eqnarray}
\label{eq.hsg}
\lefteqn{\cH^{k,\gamma}_p(\B) = \Big\{u\in H^k_{p, loc}(\Int \B): }\\ 
&&x^{\frac{n+1}2-\gamma}(x\partial_x)^jD_y^\alpha (\omega u) \in L^p(\B, \frac{dxdy}x)
\;\text{\rm for }j+|\alpha|\le k
\Big\} \nonumber.
\end{eqnarray}

Here $\omega$ is a {\em cut-off function near} $x=0$, i.e. $0\le \omega\in C^\infty_c([0,1[)$, $\omega (x) =1$ for small $x\ge 0$.  
This yields a norm on $\cH^{k,\gamma}_p(\B)$. Different choices for $\omega$ yield equivalent norms. 

For $p=2$,  $\cH^{0,0}_2(\B) = L^2(\B, \mu_g)$  is the $L^2$ space for $\B$ with the measure $\mu_g$ induced by the metric $g$, up to equivalence of norms (we will obtain the norm in $L^2(\B,\mu_g)$ if we choose the measure $\sqrt{\det h(x)}\frac{dxdy}x$ in \eqref{eq.hsg}). This motivates the factor $x^{(n+1)/2}$ in \eqref{eq.hsg}. In general, up to equivalent norms, 
$$L^p(\B,\mu_g) = \cH_p^{0,\gamma_p}(\B), \quad\text{with }  \gamma_p = (n+1)\left( 1/2 - 1/p\right),$$
by \cite[Remark 2.12(c)]{SS0}.
We next obtain $\cH^{s,\gamma}_p(\B)$ for $s\ge0$ by interpolation and finally for $s\le0$ by duality. There also is a direct way to introduce these spaces: Denote by $H^s_p(\R\times \partial\B)$ the standard $L^p$-Sobolev space on  $\R\times \partial\B$.  One then has the following alternative definition.

\begin{definition}\label{eq.hsg.2}For $s,\gamma\in \R$ and $1\le p < \infty$, $\cH^{s,\gamma}_p(\B)$ is the set of all $u\in H^{s}_{p,loc} (\Int \B)$ such that, for every cut-off function $\omega$ as defined after \eqref{eq.hsg},
$$ S_\gamma (\omega u) \in H^{s}_p(\R\times \partial\B ).$$
Here $S_\gamma$ is (the extension to distributions of) the map 
\begin{eqnarray*}
S_\gamma:  C^\infty_c(\R_+\times \partial\B) \to C_c^\infty (\R\times \partial\B), \quad 
(S_\gamma \varphi)(r,y) = e^{(\frac{n+1}2-\gamma)r} \varphi(e^{-r},y),
\end{eqnarray*}
and $\R_+ = {]0,\infty[}$. This makes $\cH^{s,\gamma}_p(\B)$ a Banach space. 
\end{definition}

That this definition actually furnishes the same space is easily checked by first considering the case  $s=0$ and then observing how derivatives transform under $S_\gamma$.  
 
\begin{corollary}\label{2.6}Let $1\le p<\infty$ and $s>(n+1)/p$, $\gamma\in \R$. Then a function   $u\in \cH^{s,\gamma}_p(\B)$  is continuous on $\Int \B$. Near $\partial \B$ it satisfies the estimate 
\begin{eqnarray*}
|u(x,y) |\le cx^{\gamma - \frac{n+1}2}\|u\|_{\cH^{s,\gamma}_p(\B)}
\end{eqnarray*}
for some constant $c\ge0$. 
In particular, $u(x,y) \to 0$ as $x\to 0$ whenever $\gamma>(n+1)/2$. 
\end{corollary}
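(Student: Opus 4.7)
The plan is to reduce both claims to the classical Sobolev embedding $H^s_p \hookrightarrow C\cap L^\infty$, exploiting the fact that $\dim \B = n+1 = \dim(\R\times\partial\B)$ so the same critical threshold $s>(n+1)/p$ applies on both sides of the transformation $S_\gamma$. The proof splits naturally according to the two assertions.

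For continuity on $\Int\B$, I would argue directly: since by hypothesis $u\in H^s_{p,\mathrm{loc}}(\Int\B)$ and $\Int\B$ is a smooth $(n+1)$-manifold, the classical Sobolev embedding applied in relatively compact coordinate charts gives that $u$ agrees almost everywhere with a continuous function on $\Int\B$. No conic structure is used here.

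For the boundary estimate, the plan is to pass to the cylinder via Definition \ref{eq.hsg.2}. Fix a cut-off $\omega$ with $\omega\equiv 1$ on $[0,x_0]$ for some $x_0>0$, and set $v:=S_\gamma(\omega u)$. By the definition of $\cH^{s,\gamma}_p(\B)$,
$$\|v\|_{H^s_p(\R\times\partial\B)}\le C\|u\|_{\cH^{s,\gamma}_p(\B)}.$$
Since $\R\times\partial\B$ is a smooth $(n+1)$-dimensional manifold of bounded geometry and $s>(n+1)/p$, the standard Sobolev embedding yields $\|v\|_{L^\infty(\R\times\partial\B)}\le C'\|v\|_{H^s_p(\R\times\partial\B)}$, so $v$ is bounded (and continuous) on the whole cylinder, uniformly in $r$.

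It then remains to undo the change of variables. Writing $r=-\log x$, the formula defining $S_\gamma$ expresses $v(r,y)$ as the product of $(\omega u)(x,y)$ with an explicit power of $x$, so inverting gives $u(x,y)=x^{\gamma-(n+1)/2}\,v(-\log x,y)$ for $x\in(0,x_0]$, where $\omega=1$. Taking absolute values and inserting the $L^\infty$-bound on $v$ produces the claimed estimate
$$|u(x,y)|\le c\,x^{\gamma-(n+1)/2}\|u\|_{\cH^{s,\gamma}_p(\B)}.$$
The final assertion is immediate, since $x^{\gamma-(n+1)/2}\to 0$ as $x\to 0$ precisely when $\gamma>(n+1)/2$. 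No essential obstacle arises; the only points requiring care are that $S_\gamma$ intertwines the weighted cone norm with an honest Sobolev norm on the cylinder (so classical embeddings apply), and that the correct power of $x$ is read off from $S_\gamma^{-1}$—this is where the exponent $\gamma-(n+1)/2$, rather than its opposite, enters.
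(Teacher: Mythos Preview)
Your proof is correct and follows essentially the same route as the paper: continuity on $\Int\B$ from the local Sobolev embedding $H^s_{p,\mathrm{loc}}\hookrightarrow C$, and the boundary estimate by applying the Sobolev embedding $H^s_p(\R\times\partial\B)\hookrightarrow L^\infty$ to $v=S_\gamma(\omega u)$ and then inverting $S_\gamma$ to recover the factor $x^{\gamma-(n+1)/2}$. The paper's proof is simply a more compressed version of exactly this argument.
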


\begin{proof}Since $u\in H^{s}_{p,loc} (\Int \B) $ for $s>(n+1)/p$, it is continuous in the interior of $\B$. 
The fact that $S_\gamma(\omega u) \in H^s_p(\R\times \partial\B)$ implies that 
\begin{eqnarray*}
|e^{(\gamma-\frac{n+1}2 )r} (\omega u)(e^{-r} ,y) |\le c_1 \|S_\gamma (\omega u)\|_{H^s_p(\R\times \partial\B)}\le c_2 \|\omega u \|_{\cH^{s,\gamma}_p(\B)} 
\end{eqnarray*}
for suitable constants $c_1$ and $c_2$. 
For $x=e^{-r}$ we obtain the  assertion. 
\end{proof}

\begin{lemma}\label{2.7}
In the situation of Corollary \ref{2.6}, we  have the weaker estimate
$u(x,y) \to 0$ as $x\to 0$ when $\gamma = (n+1)/2$.
\end{lemma}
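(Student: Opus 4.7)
The plan is to exploit the special form of $S_\gamma$ at $\gamma=(n+1)/2$: here the exponential weight $e^{((n+1)/2-\gamma)r}$ degenerates to $1$, so that
$$v(r,y) := S_\gamma(\omega u)(r,y) = (\omega u)(e^{-r}, y).$$
Consequently, proving $u(x,y)\to 0$ as $x\to 0^+$ is equivalent to showing $v(r,y)\to 0$ as $r\to+\infty$. This is the key structural observation; without the extra factor $x^{\gamma-(n+1)/2}$ that drove the argument of Corollary \ref{2.6}, I need to extract decay from the integrability of $v$ rather than from a pointwise estimate.

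By hypothesis $v\in H^s_p(\R\times\partial\B)$ with $s>(n+1)/p$. Since this inequality is strict and $\partial\B$ is a compact $n$-manifold, I can pick $\alpha>0$ so small that $s>(n+1)/p+\alpha$, and the Sobolev embedding then yields
$$H^s_p(\R\times\partial\B)\hookrightarrow C^{0,\alpha}_b(\R\times\partial\B).$$
Thus $v$ is bounded and uniformly Hölder continuous on $\R\times\partial\B$. At the same time $H^s_p(\R\times\partial\B)\subset L^p(\R\times\partial\B)$ for $s\ge 0$, so $v\in L^p(\R\times\partial\B)$ as well.

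The remaining step is a standard measure-theoretic fact: a uniformly continuous function $v$ on $\R\times\partial\B$ that lies in $L^p$ must satisfy $v(r,y)\to 0$ as $|r|\to\infty$, uniformly in $y$. Indeed, if one had $\varepsilon>0$ and sequences $r_k\to+\infty$, $y_k\in\partial\B$ with $|v(r_k,y_k)|\ge\varepsilon$, then uniform continuity would produce a fixed $\delta>0$ such that $|v|\ge\varepsilon/2$ on the ball of radius $\delta$ around each $(r_k,y_k)$. Passing to a subsequence with $r_{k+1}-r_k>2\delta$, these balls become pairwise disjoint of uniform positive measure, contradicting $\int|v|^p<\infty$. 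Setting $x=e^{-r}$, so that $r\to+\infty$ corresponds to $x\to 0^+$, and using that $\omega(x)=1$ for small $x$, gives the claimed decay of $u(x,y)$.

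The main obstacle is conceptual rather than technical: at the critical weight $\gamma=(n+1)/2$, the pointwise Sobolev bound no longer forces decay, so I replace the estimate with the softer qualitative argument above, combining uniform continuity from Sobolev embedding with $L^p$-integrability to rule out persistent oscillations at infinity in the $r$-variable.
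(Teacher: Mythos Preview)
Your proof is correct and follows essentially the same route as the paper: reduce to $v=S_{(n+1)/2}(\omega u)\in H^s_p(\R\times\partial\B)$ and then read off $v(r,y)\to0$ as $r\to\infty$, hence $u(x,y)\to0$ as $x\to0^+$. The paper merely asserts this decay for $H^s_p$-functions with $s>(n+1)/p$ without justification; your uniform-continuity-plus-$L^p$ argument is exactly the standard way to fill that step in (an equivalent one-liner: density of $C_c^\infty$ in $H^s_p$ together with the continuous embedding $H^s_p\hookrightarrow C_b$ forces the image to lie in $C_0$).
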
 

\begin{proof} We only need to consider $\omega u$ for a cut-off function $\omega$. As $S_{(n+1)/2}(\omega u) \in H^s_p(\R\times \partial\B)$, $(\omega u)(e^{-r},y) \to 0 $ as $r\to \infty$.
We then obtain the assertion for $x=e^{-r}$. 
\end{proof}

The following lemma is an immediate consequence of  \eqref{eq.hsg}.

\begin{lemma}\label{2.8} 
Let $\omega$ be a cut-off function near $x=0$ and $0\not=c\in C^\infty (\partial \B)$. 
For $q\in \R$, $k\in \N$,  let $u(x,y) = \omega(x) x^{-q} \ln^k x c(y)$. Then $u\in \cH^{\infty,\gamma}_p(\B)$  
if and only if  $q< (n+1)/2-\gamma$. 
\end{lemma}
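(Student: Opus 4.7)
The plan is to verify the defining condition \eqref{eq.hsg} of $\cH^{\infty,\gamma}_p(\B)$ directly for every $k\in\N$ (and in fact $k\in\N_0$). Since $\omega u$ is smooth on $\Int\B$ and supported in the collar neighbourhood $\{0\le x<1\}$, the only issue is the behaviour as $x\to 0^+$.

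First I would carry out the differentiations. The operator $D_y^\alpha$ acts only on $c(y)$ and produces a bounded smooth function on $\partial\B$, so it plays no role in the integrability question. For the $x$-derivatives I use Leibniz on $\omega(x)\cdot x^{-q}\ln^k x$. Any summand in which some $x\partial_x$ falls on $\omega$ is compactly supported in $\{x\ge\varepsilon\}$ for some $\varepsilon>0$ and therefore trivially in $L^p(\B,dx\,dy/x)$. On the region where $\omega\equiv 1$, an easy induction based on
$$x\partial_x(x^{-q})=-q\,x^{-q},\qquad x\partial_x(\ln^k x)=k\ln^{k-1}x,$$
yields
$$(x\partial_x)^j(x^{-q}\ln^k x)=x^{-q}P_j(\ln x),$$
where $P_j$ is a polynomial of degree at most $k$. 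Hence each summand of $x^{\frac{n+1}{2}-\gamma}(x\partial_x)^jD_y^\alpha(\omega u)$ reduces, near $x=0$, to a finite linear combination of terms of the form
$$x^{\frac{n+1}{2}-\gamma-q}\,\ln^m x\,\tilde c(y),\qquad 0\le m\le k,$$
with $\tilde c\in C^\infty(\partial\B)$.

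Since $\partial\B$ is compact, $u\in\cH^{\infty,\gamma}_p(\B)$ is equivalent to the convergence, for every $m\le k$, of
$$\int_0^\varepsilon x^{\,p\left(\frac{n+1}{2}-\gamma-q\right)-1}\,|\ln x|^{pm}\,dx.$$
Setting $\beta:=p\bigl(\tfrac{n+1}{2}-\gamma-q\bigr)$, the logarithmic factor is subdominant: for $\beta>0$ one has $|\ln x|^{pm}\le C_\beta x^{-\beta/2}$ near $0$, so the integral converges and $u\in\cH^{\infty,\gamma}_p(\B)$. Conversely, if $\beta\le 0$, I take $j=0$, $\alpha=0$ and observe that the resulting integrand $x^{\beta-1}|\ln x|^{pk}$ is not integrable at $x=0$ (for $\beta<0$ already the power blows up; for $\beta=0$ the logarithmic factor prevents integrability, and when $k=0$ the bare $1/x$ still diverges), so $u\notin\cH^{\infty,\gamma}_p(\B)$.

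The only mildly delicate point is the bookkeeping of the polynomial $P_j(\ln x)$, which I would just verify by induction on $j$; everything else is a direct computation with the Mellin-type measure $dx/x$. I do not expect any serious obstacle.
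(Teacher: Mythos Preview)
Your argument is correct and follows exactly the route the paper indicates: the paper gives no detailed proof, merely stating that the lemma is an immediate consequence of the defining condition \eqref{eq.hsg}, and you have spelled out that verification carefully. The only tacit assumption (in both the paper and your write-up) is that $c\not\equiv 0$; otherwise the ``only if'' direction is vacuous.
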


\begin{remark} \label{2.9} 
Lemma \ref{2.6}, Corollary \ref{2.7} and Lemma \ref{2.8} show that  a given space $\cH^{s,\gamma}_p(\B)$, $s>(n+1)/p$, either contains functions that diverge as $x\to0$, or all functions in this space tend to zero as $x\to 0$.
\end{remark} 

We will need the following multiplier lemma: 

\begin{lemma}\label{multiplier}
Let $1<p,q<\infty$, $\sigma>0$, $\gamma\in \R$. Then multiplication by $m\in \cH^{\sigma+(n+1)/q,(n+1)/2}_q(\B)$ defines a bounded linear operator on $\cH^{s,\gamma}_p(\B)$ for $-\sigma<s<\sigma$. 
\end{lemma}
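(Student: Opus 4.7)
My plan is to reduce the claim to the classical Sobolev multiplier theorem on the cylinder $\R\times \partial \B$, using the map $S_\gamma$ from Definition \ref{eq.hsg.2}. First I would fix cut-off functions $\omega,\omega_1$ near $x=0$ with $\omega_1\equiv 1$ on $\mathrm{supp}\,\omega$, and split $mu=\omega mu+(1-\omega)mu$. On $\mathrm{supp}(1-\omega)$, the weight is irrelevant and the spaces $\cH^{s,\gamma}_p(\B)$ and $\cH^{\sigma+(n+1)/q,(n+1)/2}_q(\B)$ restrict to ordinary Sobolev spaces on a compact piece of $\Int\B$; the estimate then follows chart by chart from the standard multiplier theorem in $\R^{n+1}$.

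For the boundary piece $\omega mu$, the key algebraic point is that the weight $(n+1)/2$ assigned to $m$ is chosen precisely so that $S_{(n+1)/2}$ carries no exponential prefactor. Using $\omega=\omega_1\omega$, a direct computation from the explicit form of $S_\gamma$ gives the pointwise identity
\[
S_\gamma(\omega m u)(r,y) \;=\; S_{(n+1)/2}(\omega_1 m)(r,y)\cdot S_\gamma(\omega u)(r,y).
\]
By assumption the first factor lies in $H^{\sigma+(n+1)/q}_q(\R\times\partial\B)$ and the second in $H^s_p(\R\times\partial\B)$, so the question reduces to showing that multiplication by an element of $H^{\sigma+(n+1)/q}_q(\R\times\partial\B)$ is bounded on $H^s_p(\R\times\partial\B)$ for $-\sigma<s<\sigma$.

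This is a classical Sobolev multiplier theorem: since $\partial\B$ is compact, a finite partition of unity reduces matters to $\R^{n+1}$, and for $\sigma>0$ the space $H^{\sigma+(n+1)/q}_q(\R^{n+1})$ embeds into $L^\infty$ and acts as a pointwise multiplier on $H^s_p(\R^{n+1})$ for $0\le s<\sigma$ (via paraproduct decomposition; see, e.g., Triebel or Runst--Sickel). The range $-\sigma<s<0$ then follows by duality, observing that by the case just treated $m$ defines a bounded multiplier on $H^{-s}_{p'}$.

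The hard part is the classical multiplier inequality itself, which, however, is well known. Everything conic-specific is absorbed by the choice of weight $(n+1)/2$ for $m$, which causes the exponential prefactors in the $S_\gamma$ computation to cancel and thereby reduces the problem to a clean product estimate in Sobolev spaces on a smooth manifold.
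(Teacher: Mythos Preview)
Your proof is correct and follows essentially the same route as the paper: split into interior and boundary pieces, and near $\partial\B$ use the transformation $S_\gamma$ (the paper just writes $m(e^{-r},y)$, implicitly using that the weight $(n+1)/2$ kills the exponential prefactor) to reduce to a standard Sobolev multiplier statement on $\R\times\partial\B$. The only cosmetic difference is that the paper passes through the Zygmund embedding $H^{\sigma+(n+1)/q}_q\hookrightarrow C^\sigma_*$ and then uses that $C^\sigma_*$ multiplies $H^s_p$ for $|s|<\sigma$, whereas you cite the Sobolev multiplier theorem (Runst--Sickel/paraproduct) directly.
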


\begin{proof} The function $m$ belongs to $H^{\sigma+(n+1)/q}_{q,loc}(\Int\B)$ and thus locally to the Zygmund space $C^\sigma_*$. It therefore acts as a multiplication operator on $H^s_p$ for $-\sigma<s<\sigma$.  
In order to study the behavior near the boundary, we can assume $m$ to be supported in a single coordinate neighborhood. By definition, the function $m(e^{-r},y)$ then is an element of $H^{\sigma +(n+1)/q}_q(\R\times \partial \B)\hookrightarrow C^\sigma_*(\R\times \partial \B)$. It therefore acts as a bounded multiplier on $H^s_p(\R\times \partial \B)$ for $-\sigma<s<\sigma$.  
\end{proof} 

%
\begin{corollary}\label{algebra} $\cH^{s_0,\gamma_0}_p(\B)$ is a Banach algebra for  $s_0>(n+1)/p$ and $\gamma_0\ge(n+1)/2$ up to an equivalent norm.  
\end{corollary}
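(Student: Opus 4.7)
My plan is to reduce the claim to the standard Sobolev-algebra property of $H^{s_0}_p(\R\times\partial\B)$ via the map $S_\gamma$ from Definition~\ref{eq.hsg.2}. A direct appeal to Lemma~\ref{multiplier} does not work, because that lemma demands a multiplier of regularity $\sigma+(n+1)/q$ while each factor only lies in $\cH^{s_0,\gamma_0}_p(\B)$: no pair $(\sigma,q)$ simultaneously satisfies $\sigma>s_0$ (needed for $s_0$ to be in the admissible range) and $\sigma+(n+1)/q\le s_0$. Hence one has to exploit the algebra structure of standard Sobolev spaces on the cylinder rather than the multiplier lemma.

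First I would split $u=\omega u+(1-\omega)u$ (and likewise $v$) for a cut-off $\omega$ near $x=0$. On $\Int\B$, which is a smooth manifold of dimension $n+1$, the standard fact that $H^{s_0}_{p,\mathrm{loc}}$ is a Banach algebra for $s_0>(n+1)/p$ handles $(1-\omega)u\cdot(1-\omega)v$ and the mixed terms. It then remains to bound $\omega^{2}uv$ in $\cH^{s_0,\gamma_0}_p(\B)$ near the boundary.

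A direct computation from $(S_\gamma\varphi)(r,y)=e^{((n+1)/2-\gamma)r}\varphi(e^{-r},y)$ yields the product identity
\[
S_{\gamma_1}(f)\cdot S_{\gamma_2}(g)=S_{\gamma_1+\gamma_2-(n+1)/2}(fg).
\]
Choosing $\gamma_1=\gamma_0$ and $\gamma_2=(n+1)/2$, which is allowed precisely because $\gamma_0\ge(n+1)/2$, gives
\[
S_{\gamma_0}(\omega^{2}uv)=S_{\gamma_0}(\omega u)\cdot S_{(n+1)/2}(\omega v).
\]
From \eqref{eq.hsg} the weight $x^{(n+1)/2-\gamma}$ is monotone decreasing in $\gamma$ on $(0,1]$, so $\cH^{s_0,\gamma_0}_p(\B)\hookrightarrow\cH^{s_0,(n+1)/2}_p(\B)$ for $\gamma_0\ge(n+1)/2$; consequently both $S_{\gamma_0}(\omega u)$ and $S_{(n+1)/2}(\omega v)$ belong to $H^{s_0}_p(\R\times\partial\B)$.

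Finally, since $\dim(\R\times\partial\B)=n+1$ and $s_0>(n+1)/p$, the space $H^{s_0}_p(\R\times\partial\B)$ is a Banach algebra by the classical Moser estimate together with the Sobolev embedding into $L^\infty$. Therefore
\[
\bigl\|S_{\gamma_0}(\omega^{2}uv)\bigr\|_{H^{s_0}_p(\R\times\partial\B)}\le C\,\|u\|_{\cH^{s_0,\gamma_0}_p(\B)}\|v\|_{\cH^{s_0,\gamma_0}_p(\B)},
\]
which by Definition~\ref{eq.hsg.2} is exactly the boundary contribution to $\|\omega^{2}uv\|_{\cH^{s_0,\gamma_0}_p(\B)}$. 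Combined with the interior estimate, this proves the Banach-algebra property up to an equivalent norm. The main point to get right is the asymmetric choice of exponents $\gamma_1=\gamma_0$, $\gamma_2=(n+1)/2$, which balances the exponential factors in $S_\gamma$ and is the precise reason the threshold $\gamma_0\ge(n+1)/2$ appears in the hypothesis.
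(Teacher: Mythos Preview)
Your proof is correct. In the paper the corollary is stated without proof, immediately after Lemma~\ref{multiplier}; you correctly observe that a literal application of that lemma fails at the endpoint $s=s_0$ because of the strict inequality $|s|<\sigma$ inherited from the Zygmund multiplier theorem. Your argument is essentially the proof of Lemma~\ref{multiplier} rerun with the sharper input that $H^{s_0}_p(\R\times\partial\B)$ is itself a Banach algebra for $s_0>(n+1)/p$: the product identity $S_{\gamma_0}(fg)=S_{\gamma_0}(f)\cdot S_{(n+1)/2}(g)$ is exactly the relation implicit in that proof (the multiplier there has weight $(n+1)/2$, so $S_{(n+1)/2}$ carries no exponential factor), and the embedding $\cH^{s_0,\gamma_0}_p(\B)\hookrightarrow\cH^{s_0,(n+1)/2}_p(\B)$ for $\gamma_0\ge(n+1)/2$ is Theorem~\ref{2.10}(a). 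So your route and the paper's implicit one coincide in spirit; you have simply made explicit the endpoint refinement that the label ``Corollary'' leaves to the reader.
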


%

\begin{theorem}\label{2.10}The cone Sobolev spaces have the following properties for $s,\gamma\in\R$ and $1<p<\infty$. 
\begin{enumerate}\renewcommand{\labelenumi}{{\rm(\alph{enumi})}}
\item $\cH^{s,\gamma}_p(\B)\hookrightarrow \cH^{s',\gamma'}_p(\B)$ for $s\ge s'$ and $\gamma\ge \gamma'$
\item The embedding in (a) is compact if, and only if, $s>s'$ and $\gamma>\gamma'$.
\item Complex interpolation furnishes 
$$\cH^{\tilde s+\varepsilon,\tilde \gamma+\varepsilon}_p(\B)\hookrightarrow[\cH^{s,\gamma}_p(\B), \cH^{s',\gamma'}_p(\B)]_\sigma\hookrightarrow \cH^{\tilde s-\varepsilon,\tilde \gamma-\varepsilon}_p(\B),$$
where $\tilde s = s+\sigma(s'-s)$, $\tilde\gamma= \gamma+\sigma(\gamma'-\gamma)$, and $\varepsilon >0$ is arbitrary. 
For $\gamma=\gamma'$,   
$$[\cH^{s,\gamma}_p(\B), \cH^{s',\gamma}_p(\B)]_\sigma= \cH^{\tilde s, \gamma}_p(\B).$$
\end{enumerate} 
\end{theorem}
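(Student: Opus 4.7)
The plan is to reduce all three statements to corresponding properties of the Bessel potential spaces $H^s_p(\R\times\partial\B)$ via the map $S_\gamma$ in Definition \ref{eq.hsg.2}. Fix a cut-off $\omega$ as there and decompose $u=(1-\omega)u+\omega u$; on the interior piece the spaces coincide locally with standard Sobolev spaces, so (a)--(c) are classical there, and I focus on $\omega u$. The key observation is that for distinct weight parameters,
$$S_{\gamma'}(\omega u)(r,y)= e^{(\gamma'-\gamma)r}\, S_\gamma(\omega u)(r,y),$$
with both sides supported in the same half-line $\{r\ge -\ln c\}$, where $c<1$ bounds the support of $\omega$ in $x$. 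The exponential factor is a smooth multiplier whose growth or decay on that half-line is explicitly controlled by the sign of $\gamma'-\gamma$.

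For (a) I take $\gamma\ge\gamma'$: the multiplier $e^{(\gamma'-\gamma)r}$ and all its $r$-derivatives are bounded on the relevant support, so it acts continuously on $H^s_p(\R\times\partial\B)$, and combining with the classical inclusion $H^s_p\hookrightarrow H^{s'}_p$ for $s\ge s'$ gives the embedding. For the sufficiency part of (b), I localise further near the tip with $\omega_R(x)=\omega(Rx)$ and use the same identity, now with $\gamma-\gamma'>0$, to obtain a tail bound of the form
$$\|\omega_R u\|_{\cH^{s',\gamma'}_p(\B)}\le C\, R^{-(\gamma-\gamma')}\,\|u\|_{\cH^{s,\gamma}_p(\B)};$$
the complementary piece $(\omega-\omega_R)u$ is supported in a compact subset of $\Int\B$, where Rellich--Kondrachov (using $s>s'$) yields a convergent subsequence in $H^{s'}_p$, and a diagonal extraction in $R$ completes the argument. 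Necessity follows from two standard counterexamples: interior translates of a fixed bump rule out $s=s'$, and the dyadic scaling family $u_k(x,y)=\omega(2^k x)\,x^{-q}v(y)$ with $q$ chosen just below $(n+1)/2-\gamma$ (cf.\ Lemma \ref{2.8}) has bounded $\cH^{s,\gamma}_p$-norm but its mass escapes to $x=0$, ruling out $\gamma=\gamma'$.

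For (c), when $\gamma=\gamma'$ the single operator $S_\gamma$ identifies both $\cH^{s,\gamma}_p$ and $\cH^{s',\gamma}_p$ simultaneously with Bessel potential spaces on $\R\times\partial\B$, so the classical complex-interpolation identity $[H^s_p,H^{s'}_p]_\sigma=H^{\tilde s}_p$ transfers verbatim to the equality stated. The case $\gamma\ne\gamma'$ is the real obstacle. With a single reference map $S_\gamma$, the second space becomes the exponentially weighted Bessel space $\{f\in H^{s'}_p:e^{(\gamma-\gamma')r}f\in H^{s'}_p\}$, for which complex interpolation does not return a standard scale. The plan is to apply Stein's interpolation theorem to the analytic family obtained by combining multiplication by $e^{z(\gamma-\gamma')r}$ with a complex Bessel potential of order $z(s'-s)$; uniform estimates on the two vertical lines $\re z=0,1$ are routine, but identifying the image at the intermediate $z=\sigma$ requires absorbing a small fraction of the exponential weight into a slightly larger Bessel scale. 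This exchange is precisely what produces the arbitrary $\varepsilon>0$ loss in both $s$ and $\gamma$ and yields the two-sided inclusions. I expect this weight--smoothness trade-off to be the principal technical hurdle.
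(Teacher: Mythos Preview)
Your arguments for (a), for the sufficiency in (b), and for the equality in (c) when $\gamma=\gamma'$ are correct and coincide with the paper's proof, which is very terse: it simply declares (a) ``immediate from the definition'', says (b) ``follows from the corresponding property of the standard Sobolev spaces'', and for (c) with $\gamma=\gamma'$ invokes Definition~\ref{eq.hsg.2} together with the weight isomorphism $u\mapsto x^\sigma u$ between $\cH^{s,\gamma}_p(\B)$ and $\cH^{s,\gamma+\sigma}_p(\B)$. You have spelled out exactly what these sentences mean.

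One concrete error: your counterexample for the necessity of $\gamma>\gamma'$ in (b) does not work. For $q<(n+1)/2-\gamma$ the functions $u_k=\omega(2^kx)x^{-q}v$ are simply the tails of the fixed element $\omega(x)x^{-q}v\in\cH^{s,\gamma}_p(\B)$, and a direct computation with \eqref{eq.hsg} shows $\|u_k\|_{\cH^{s,\gamma}_p}\sim 2^{-k((n+1)/2-\gamma-q)}\to 0$, so the sequence already converges in the source space. The standard fix is to use normalised dilated bumps $u_k(x,y)=2^{k((n+1)/2-\gamma)}\phi(2^kx)v(y)$ with $\phi\in C^\infty_c(\,]0,1[\,)$; under $S_\gamma$ these become integer translates of a fixed $H^s_p(\R\times\partial\B)$ function and hence form a bounded sequence with no convergent subsequence when $\gamma'=\gamma$.

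For the $\varepsilon$-loss inclusions in (c) with $\gamma\neq\gamma'$ the paper gives no argument at all---it cites \cite[Lemma~5.4]{CSS0}---so your Stein-interpolation plan is genuinely different and more ambitious. The strategy is reasonable, but the obstacle is not quite the one you name. After pushing forward by $S_\gamma$, the problem becomes complex interpolation between $H^s_p(\R\times\partial\B)$ and the exponentially weighted space $e^{-(\gamma'-\gamma)r}H^{s'}_p(\R\times\partial\B)$, restricted to functions supported in a half-line $\{r>r_0\}$. The analytic family you propose involves a Bessel potential $(1-\Delta)^{z(s'-s)/2}$, which is nonlocal and destroys this one-sided support; once support is lost you can no longer invert $S_\gamma$ to return to $\cH^{s,\gamma}_p(\B)$. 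Repairing this requires inserting a further cutoff in $r$ and controlling its commutator with the Bessel potential, and it is this step---not the ``absorption of a fraction of the exponential weight''---that forces the $\varepsilon$-loss. An alternative that sidesteps the issue is to set up a single retraction/coretraction pair valid simultaneously for both endpoint spaces, which is closer in spirit to how the cited reference proceeds.
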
 

\begin{proof}
(a) is immediate from the definition; (b) follows from the corresponding property of the standard Sobolev spaces. The first part of (c) is \cite[Lemma 5.4]{CSS0}. For $\gamma=(n+1)/2$, the second follows from the standard interpolation theorem for Sobolev spaces and Definition \ref{eq.hsg.2}. For general $\gamma$ use the fact that multiplication by a smooth positive function that is equal to $x^\sigma$ near $\partial \B$ and equal to  $1$ outside a neighborhood of $\partial \B$ furnishes an isomorphism from $\cH^{s,\gamma}_p(\B)$ to $\cH^{s,\gamma+\sigma}_p(\B)$ for any $\gamma$. 
\end{proof}

There is another class of spaces that is important for the analysis on conic manifolds. 

\begin{definition}\label{2.11} 
For $s,\gamma\in\R$ and $1<p<\infty$,  $\cK^{s,\gamma}_p(\R_+\times\partial \B)$ consists of all distributions $u$ on $\R_+\times\partial \B$ such that for every cut-off function $\omega$
\begin{enumerate}\renewcommand\labelenumi{\rm(\roman{enumi})}
\item $\omega u \in \cH^{s,\gamma}_p(\B)$, and 
\item given a coordinate map $\kappa:U\subseteq\partial \B\to \R^n$ and $\phi\in C^\infty_c(U)$, the push forward $\chi_*((1-\omega)(x)\phi(y) u)$ is an element of $H^{s}_p(\R^{1+n})$, where $\chi(x,y) = (x,x\kappa(y))$. 
\end{enumerate}
$\cK^{s,\gamma}_p(\R_+\times\partial \B)$ can then be given the topology of a Banach space. 
\end{definition} 

Away from the boundary of $\B$, $\cK^{s,\gamma}_p(\R_+\times \partial \B)$ is the canonical Sobolev space $H^{s}_p$ on the far end of the cone with cross-section $\partial \B$, defined by considering $x\in (0,\infty)$ as a global coordinate. Alternatively, we may consider it there as the Sobolev space on $\R_+\times \partial \B$ with respect to a metric that is conical at infinity.\footnote{See \cite[Sect. 4.15]{Mel93} for a corresponding geometric model.} 
These spaces were introduced for $p=2$ in \cite[Section 2.1.1]{Schu}.

From the definition of the cone differential operators and \eqref{eq.hsg}  we immediately
obtain

\begin{theorem}\label{2.14}
Let $A$ be as in \eqref{eq.conedeg}, $s,\gamma\in \R$ and $1<p<\infty$. Then
$$A: \cH^{s+\mu,\gamma+\mu}_p(\B)\to \cH^{s,\gamma}_p(\B)$$ 
is a bounded operator. 
\end{theorem}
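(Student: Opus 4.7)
The plan is to localize the estimate into an interior piece and a boundary piece, each of which reduces to a standard mapping property for differential operators.

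Fix a cut-off function $\omega$ near $x=0$ and write $u=\omega u+(1-\omega)u$. On the support of $1-\omega$ the coordinate $x$ is bounded below by a positive constant, so the factor $x^{-\mu}$ in \eqref{eq.conedeg} is a smooth bounded function and $A$ reduces there to a differential operator of order $\mu$ with smooth coefficients on a relatively compact subset of $\Int\B$. Since $\cH^{s+\mu,\gamma+\mu}_p(\B)$ and $\cH^{s,\gamma}_p(\B)$ coincide locally with the usual Sobolev space $H^s_p$ away from $\partial\B$, the classical mapping property of differential operators of order $\mu$ on smooth manifolds disposes of the interior contribution.

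The heart of the argument is the boundary contribution $A(\omega u)$. By Definition \ref{eq.hsg.2}, the $\cH^{s,\gamma}_p$-norm near $\partial\B$ is measured by the $H^s_p(\R\times\partial\B)$-norm of $S_\gamma(\omega u)$, so I would relate $S_\gamma\bigl(A(\omega u)\bigr)$ to $S_{\gamma+\mu}(\omega u)\in H^{s+\mu}_p(\R\times\partial\B)$. Three elementary identities drive the computation with the substitution $x=e^{-r}$: the factor $x^{-\mu}$ combines with the exponential prefactor of $S_\gamma$ and shifts the weight by $\mu$, giving a relation of the form $S_\gamma\circ x^{-\mu}=S_{\gamma+\mu}$; the Fuchs-type field $-x\partial_x$ becomes $\partial_r$ acting on $\varphi(e^{-r},y)$, which commuted through the exponential prefactor produces $\partial_r$ shifted by a real constant $c_{\gamma,n}$; and since $a_j(x)$ differentiates only in $y$, it passes through $S_\gamma$ to become $a_j(e^{-r})$. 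Combining these yields
\[ S_\gamma\circ A \;=\; \widetilde A\circ S_{\gamma+\mu}, \qquad \widetilde A \;=\; \sum_{j=0}^\mu a_j(e^{-r})\,(\partial_r+c_{\gamma+\mu,n})^j, \]
a differential operator of order $\mu$ on the cylinder $\R\times\partial\B$.

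It remains to show $\widetilde A:H^{s+\mu}_p(\R\times\partial\B)\to H^s_p(\R\times\partial\B)$ is bounded. Because each $a_j$ is a smooth family of differential operators on $\partial\B$ over $x\in[0,1[$, the coefficients $r\mapsto a_j(e^{-r})$ are smooth on $[0,\infty)$ and, together with all $(r,y)$-derivatives, uniformly bounded on the set $\{r\ge r_0\}$ containing the support of $S_\gamma(\omega u)$. Boundedness $H^{s+\mu}_p\to H^s_p$ for a differential operator of order $\mu$ with such uniformly bounded smooth coefficients on the cylinder is standard. A more elementary route for $s=k\in\N_0$ is to work directly from \eqref{eq.hsg}, using the commutation $(x\partial_x)^i(x^{-\mu}f)=x^{-\mu}(x\partial_x-\mu)^if$ and the fact that $D_y^\alpha$ raises the order of $a_j(x)$ only in the tangential direction; the extension to general $s\in\R$ is then supplied by the duality and complex interpolation properties collected in Theorem \ref{2.10}.

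The main technical point is the careful bookkeeping of the weight shift under conjugation by $S_\gamma$: once the three algebraic identities above are in place, everything reduces to the smoothness of $x\mapsto a_j(x)$ at $x=0$, which is exactly what guarantees that the transformed coefficients stay uniformly bounded as $r\to\infty$, i.e.\ as $x\to 0$.
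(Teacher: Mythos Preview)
Your argument is correct and is precisely a fleshed-out version of what the paper treats as immediate: the paper gives no proof beyond the remark preceding Theorem \ref{2.14} that the statement follows directly ``from the definition of the cone differential operators and \eqref{eq.hsg}''. Your localization into interior and collar parts, together with the conjugation identities for $S_\gamma$ (equivalently the direct computation from \eqref{eq.hsg} for integer $s$ that you also indicate), is exactly the verification the paper leaves to the reader.
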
  

\begin{definition}\label{2.15} 
A cone differential operator  $A$ as in \eqref{eq.conedeg} is called elliptic with respect to the line 
$\Gamma_{(n+1)/2-\gamma}= \{z\in \C: \re(z) = (n+1)/2-\gamma\}$, $\gamma\in \R$, if 
\begin{enumerate}\renewcommand{\labelenumi}{(\roman{enumi})}
\item the principal pseudodifferential symbol  $\sigma^\mu_\psi(A)$ is invertible on $T^*(\Int \B)\setminus 0$, and the rescaled symbol extends smoothly to $x=0$, remaining invertible up to $x=0$ for $(\xi,\eta)\not=0$.
\item The principal Mellin symbol $\sigma^\mu_M(A)(z)\in \scrL(H^\mu(\partial \B), L^2(\partial \B))$ is invertible for every $z\in \Gamma_{(n+1)/2-\gamma}$. 
\end{enumerate} 
\end{definition}

\begin{remark}\label{2.16} 
It seems arbitrary to assume  invertibility  in $\scrL(H^\mu(\partial \B),L^2(\partial \B))$. However, the spectral invariance of the differential operators implies that we might have equivalently required invertibility  in $\scrL(H^{s+\mu}_p(\partial \B), H^s_p(\partial \B))$ for any $s\in \R$ and $1<p<\infty$ (or many other spaces). 
\end{remark}

As usual, ellipticity implies the Fredholm property. 

\begin{theorem}\label{2.17}
Let $A$ be as in \eqref{eq.conedeg}, $s,\gamma\in \R$, $1<p<\infty$.  
Then 
$$A: \cH^{s+\mu,\gamma+\mu}_p(\B)\to \cH^{s,\gamma}_p(\B)$$
is a Fredholm operator, if, and only if, $A$ is elliptic with respect to the line $\Gamma_{(n+1)/2-\gamma -\mu}$ in the sense of Definition \ref{2.15}.
In this case, the index is independent of $s$ and $p$. The index does depend on $\gamma$, but it is locally constant near all $\gamma$ for which $\sigma_M^\mu(A)(z)$ is invertible for every  $z\in \Gamma_{(n+1)/2-\gamma}$.   
\end{theorem}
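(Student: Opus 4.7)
The plan is to prove the theorem by constructing a parametrix in Schulze's cone pseudodifferential calculus: in that calculus $A$ is elliptic in the sense of Definition \ref{2.15} precisely when it admits a two-sided parametrix modulo smoothing Green operators, and the latter are compact between the relevant cone Sobolev spaces by the compact embedding in Theorem \ref{2.10}(b).

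For sufficiency I build a parametrix $P$ of order $-\mu$ in three localizations. In the interior of $\B$ I apply the classical H\"ormander construction, inverting $\sigma_\psi^\mu(A)$ modulo smoothing operators. In a collar ${[0,1[}\times\partial\B$ I pass to the Mellin transform $M_{\gamma+\mu}$, which identifies the near-boundary part of $\cH^{s,\gamma+\mu}_p(\B)$ with an $H^s_p$-Sobolev space on the line $\Gamma_{(n+1)/2-\gamma-\mu}\subset\C$ with values in Sobolev spaces on $\partial\B$, and turns $-x\partial_x$ into multiplication by $z$. Hypothesis (ii) of Definition \ref{2.15} supplies invertibility of $\sigma_M(A)(z)$ on that line; by spectral invariance of pseudodifferential operators (cf.\ Remark \ref{2.16}) this extends to a thin strip with uniform bounds as $|\im z|\to\infty$, and Mellin-quantizing $\sigma_M(A)^{-1}$ produces the boundary contribution to $P$. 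Hypothesis (i), through the rescaled symbol, provides the uniform estimates at $x=0$ needed to glue the interior and boundary pieces by a cut-off partition. A formal Neumann series then absorbs the composition errors into smoothing Green operators that gain weight strictly in both source and target; by Theorem \ref{2.10}(b) these are compact, so $A$ is Fredholm.

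For necessity, assume $A$ is Fredholm. Failure of invertibility of $\sigma_\psi^\mu$ or its rescaled version at some point $(x_0,y_0,\xi_0,\eta_0)$ produces an infinite Weyl sequence of localized high-frequency oscillations --- rescaled at $x_0$ and normalized in $\cH^{s+\mu,\gamma+\mu}_p(\B)$ --- that is weakly null but whose $A$-image converges to zero in norm, violating the Fredholm property. If instead $\sigma_M(A)(z_0)$ fails to be invertible at some $z_0\in\Gamma_{(n+1)/2-\gamma-\mu}$, I exploit Lemma \ref{2.8}: for $c(y)$ in the kernel of $\sigma_M(A)(z_0)$, functions of the form $\omega(x)x^{-(z_0+i\tau)}c(y)$, $\tau\in\R$, sit precisely on the borderline of $\cH^{s+\mu,\gamma+\mu}_p(\B)$, and a suitable regularization combined with a Fourier-type superposition in $\tau$ yields infinitely many approximate nullvectors for $A$, again contradicting Fredholmness.

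For the index statement, elliptic regularity applied to the parametrix $P$ shows that every $u\in\ker A$ satisfies $u=(I-PA)u$ with $I-PA$ smoothing, hence $u\in\cH^{\infty,\gamma+\mu}_p(\B)$; by duality the cokernel is also smooth. Both spaces are therefore independent of $s$, and by spectral invariance of the cone calculus together with the fixed smooth asymptotic behavior of their elements, they are independent of $p$ as well. The main obstacle, and the technical heart of the argument, is the boundary parametrix: one has to track the Mellin and Green remainders so that compositions land in the correct smoothing class of the Schulze algebra. All the other steps are adaptations of standard pseudodifferential techniques.
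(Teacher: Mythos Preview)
The paper does not prove Theorem \ref{2.17} itself; it merely records that the result goes back to Kondrat'ev and refers to \cite[Corollaries 3.3 and 3.5, Theorem 3.13]{SS0} for the $L_p$-version. Your plan---parametrix construction in Schulze's cone algebra, compactness of Green remainders via Theorem \ref{2.10}(b), and elliptic regularity for the kernel/cokernel---is exactly the strategy carried out in that reference, so you are on the same route.

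One point to tighten: in your necessity argument for condition (ii) you only treat the case where $\sigma_M(A)(z_0)$ has a nontrivial \emph{kernel}. Since $\sigma_M(A)(z_0)$ is an elliptic operator on the closed manifold $\partial\B$ (this follows from (i)), it is Fredholm, and non-invertibility could equally well mean a nontrivial cokernel with trivial kernel; in that case there is no $c(y)$ to feed into your oscillatory construction. The standard fix is to pass to the formal adjoint $A^*$, whose conormal symbol is $\sigma_M(A)(n+1-\bar z)^*$ and therefore has a kernel at the reflected point, and to run the singular-sequence argument there---Fredholmness of $A$ being equivalent to Fredholmness of $A^*$. Alternatively (and this is closer to what \cite{SS0} actually does), one shows that a Fredholm cone operator admits a parametrix in the calculus and then reads off both ellipticity conditions from the symbols of the parametrix, bypassing the Weyl-sequence construction altogether.
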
 

This result goes back to Kondrat'ev \cite{K1967}. For a proof  of this version involving  $L_p$-Sobolev spaces see \cite[Corollaries 3.3 and 3.5, Theorem 3.13]{SS0}. \footnote{The corresponding result in the $b$-calculus with $p=2$ is \cite[Theorem 5.40]{Mel93}. In case $\sigma_\psi^\mu(A)$ satisfies (i) in Definition \ref{2.15}, but $\sigma^\mu_M(A)$  is not invertible on all of $\Gamma_{(n+1)/2-\gamma-\mu}$, Lesch \cite[Proposition 1.3.16]{Lesch}  showed that the minimal extension of $A$ in $\cH^{s,\gamma}_p(\B)$,  whose domain is larger than $\cH^{s+\mu,\gamma+2}_p(\B)$, is a Fredholm operator. See Section \ref{Sect.3.2}, below,  for the minimal extension.}

\begin{example}\label{2.18}
For the Laplace-Beltrami operator $\Delta$ we immediately see from Equations \eqref{2.6.1} and \eqref{2.6.2} that condition (i) in Definition \ref{2.15} is fulfilled. 

Concerning  (ii): The points of non-invertibility of the principal Mellin symbol
\begin{eqnarray}
\label{conormal}
\sigma_M(\Delta)(z) = z^2 -(n-1)z +\Delta_{h(0)}
\end{eqnarray}
are
\begin{eqnarray}
\label{qj}
q_{j}^{\pm}=\frac{n-1}{2}\pm\sqrt{\Big(\frac{n-1}{2}\Big)^{2}-\lambda_{j}},\ j=0,1,2, \ldots,
\end{eqnarray}
where $0=\lambda_0>\lambda_1> \lambda_2>\ldots$ are the different eigenvalues of $\Delta_{h(0)}$.
Hence $\Delta$ is elliptic with respect to any weight line that does not contain any of the $q_j^\pm$, and, given $s\in \R$, $1<p<\infty$, 
$$\Delta: \cH^{s+2,\gamma+2}_p(\B) \to \cH^{s,\gamma}_p(\B)$$ 
is a Fredholm operator if, and only if, $(n+1)/2-\gamma-2 \not= q_j^\pm$ for all $j$. 
  
Denote by $E_j$ the $\lambda_j$-eigenspace of the Laplacian $\Delta_{h(0)}$ on $\partial \B$ and by $\pi_j$  the orthogonal projection in $L^2(\partial \B)$ onto  $E_j$. Note that $\lambda_0 =0$ and $E_0$ consists of locally constant functions on $\partial \B$. We moreover see that
\begin{eqnarray}\label{inverse}
\sigma_M(\Delta)^{-1}(z) = \sum_{j=0}^\infty \frac{\pi_j}{(z-q_j^+)(z-q_j^-)}.
\end{eqnarray}
Hence the poles of $z\mapsto \sigma_M(z)^{-1}$ are all simple unless $n=1$, when $z=q_0^+=q_0^-=0$ is a double pole. 
\end{example} 

\section{Closed Extensions of Cone Differential Operators}

\subsection{The Model Cone Operator} 

To a cone differential operator $A=x^{-\mu} \sum_{j=0}^\mu a_j(x)(-x\partial_x)^j$ as in \eqref{eq.conedeg} we  associate the model cone operator 
\begin{eqnarray}
\label{eq.3.0}
\widehat A =x^{-\mu}  \sum_{j=0}^\mu a_j(0) (-x\partial_x)^j.
\end{eqnarray}
It reflects the behavior of $A$ near the tip of the cone.\footnote{This is essentially the indicial operator in the $b$-calculus, see \cite[(4.102)]{Mel93}.} 

\begin{lemma}\label{3.1}  Let $s,\gamma\in \R$ and $1<p<\infty$. 
Then $\widehat A$ in \eqref{eq.3.0} defines a bounded linear map 
$$\widehat A: \cK^{s+\mu,\gamma+\mu}_p(\R_+\times \partial \B) \to \cK^{s,\gamma}_p(\R_+\times \partial \B).$$
\end{lemma}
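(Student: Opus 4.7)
The plan is to verify, for $u\in\cK^{s+\mu,\gamma+\mu}_p(\R_+\times\partial\B)$, the two defining conditions of Definition~\ref{2.11} for the image $\widehat Au$. The norm estimate then follows either by tracking constants through the argument or, since $\widehat A$ is continuous on distributions (hence has a closed graph), from the closed graph theorem.

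\emph{Near-boundary condition.} Given a cut-off $\omega$ near $x=0$, pick an enlargement $\omega_1\in C_c^\infty([0,1[)$ with $\omega_1\equiv 1$ on $\operatorname{supp}\omega$. Since $\widehat A$ is a differential operator, $\widehat A((1-\omega_1)u)$ vanishes on $\operatorname{supp}\omega$, so $\omega\,\widehat Au=\omega\,\widehat A(\omega_1 u)$. Condition~(i) of Definition~\ref{2.11} applied to $u$ places $\omega_1 u$ (extended by zero) in $\cH^{s+\mu,\gamma+\mu}_p(\B)$. Regarding $\widehat A$ itself as a cone differential operator on $\B$, with $x$-independent (hence smooth in $x$) coefficients $a_j(0)$, Theorem~\ref{2.14} gives $\widehat A(\omega_1 u)\in\cH^{s,\gamma}_p(\B)$, and multiplication by the smooth $\omega$ preserves this space.

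\emph{Away-from-boundary condition.} A chain-rule calculation in the coordinates $(r,z)=\chi(x,y)=(x,x\kappa(y))$ yields the correspondences
$$
x\partial_x\;\longleftrightarrow\;r\partial_r+z\cdot\nabla_z,\qquad
\partial_{y_i}\;\longleftrightarrow\;r\sum_j(\partial_{y_i}\kappa_j)(\kappa^{-1}(z/r))\,\partial_{z_j}.
$$
Hence each $a_j(0)$, a differential operator of order $\mu-j$ on $\partial\B$, transforms into $r^{\mu-j}\tilde b_j$, where $\tilde b_j$ is a differential operator of order $\mu-j$ in $z$ whose coefficients depend smoothly on $z/r$. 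Consequently
$$
\tilde A:=\chi_*\widehat A\chi^*=\sum_{j=0}^{\mu}r^{-j}\tilde b_j\bigl(r\partial_r+z\cdot\nabla_z\bigr)^j
$$
is a differential operator of order $\mu$ on $(0,\infty)\times\R^n$ that is homogeneous of degree $-\mu$ under the Euclidean dilation $(r,z)\mapsto(\lambda r,\lambda z)$. Now choose $\omega_2$ and $\tilde\phi\in C_c^\infty(U)$ with $\omega_2\equiv 1$ on $\operatorname{supp}(1-\omega)$, $1-\omega_2$ a cut-off near $x=0$, and $\tilde\phi\equiv 1$ on $\operatorname{supp}\phi$. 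By locality of $\widehat A$, $(1-\omega)\phi\,\widehat Au=(1-\omega)\phi\,\widehat A(\omega_2\tilde\phi u)$; condition~(ii) of Definition~\ref{2.11} applied to $u$ gives $\chi_*(\omega_2\tilde\phi u)\in H^{s+\mu}_p(\R^{n+1})$. It therefore suffices to show that $\tilde A$ maps $H^{s+\mu}_p(W)\to H^s_p(\R^{n+1})$ boundedly, where $W=\{(r,z):r\ge c_0,\ z/r\in K\}$ for some compact $K\subset\R^n$.

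\emph{Main obstacle.} The coefficients of $\tilde A$ are Euclidean-homogeneous of non-negative degrees in the top-order terms and so grow polynomially at infinity; a flat Sobolev mapping estimate for differential operators with bounded coefficients does not apply directly. I would resolve this by a dyadic rescaling. Decompose $v=\chi_*(\omega_2\tilde\phi u)=\sum_k v_k$ with $v_k$ supported where $r+|z|\sim 2^k$, and rescale to $\tilde v_k(r,z)=v_k(2^kr,2^kz)$, which lives in the unit annulus. There, by the homogeneity of $\tilde A$, the rescaled operator has smooth coefficients that are uniformly bounded in $k$ together with all derivatives; the standard Sobolev estimate $H^{s+\mu}_p\to H^s_p$ applies at unit scale. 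Undoing the rescaling, the $\lambda^\mu$-homogeneity of $\tilde A$ exactly matches the $2^k$-scaling of the $H^s_p$-norm (modulo the loss of $\mu$ orders), so the pieces reassemble into the desired bound. All remaining steps are routine bookkeeping of cut-offs, controlled throughout by the locality of $\widehat A$ as a differential operator.
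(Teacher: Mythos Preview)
Your near-boundary argument is fine and coincides with the paper's. The trouble is entirely in the ``main obstacle'' paragraph: the obstacle you identify does not exist, and the rescaling argument you propose to overcome it is not correct as stated.

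Since $\tilde A=\chi_*\widehat A$ is homogeneous of degree $-\mu$ under $(r,z)\mapsto(\lambda r,\lambda z)$, any term $c_\alpha(r,z)\,\partial^\alpha$ in $\tilde A$ has $c_\alpha$ homogeneous of degree $|\alpha|-\mu\le 0$. On the support region $W=\{r\ge c_0,\ z/r\in K\}$ these coefficients, and all their derivatives, are therefore \emph{bounded}: they are smooth functions of $z/r$ multiplied by non-positive powers of $r$. (Your own formula already shows this: expanding $r^{-j}(r\partial_r+z\cdot\nabla_z)^j$ produces coefficients that are polynomials in $z/r$, not in $z$.) After a routine $C^\infty_b$ extension off $W$, the standard mapping property of differential operators with bounded smooth coefficients gives $\tilde A:H^{s+\mu}_p\to H^s_p$ directly. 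This is precisely the paper's argument. A minor side remark: your formula $\chi_*a_j(0)=r^{\mu-j}\tilde b_j$ is only accurate for the top-order terms of $a_j(0)$; lower-order terms produce smaller powers of $r$. This does not affect the degree-$0$ homogeneity of $\chi_*a_j(0)$, which is the relevant point.

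By contrast, the dyadic rescaling you sketch does not close. The $H^s_p$-norm is not homogeneous under dilations: for $v_k$ supported at scale $2^k$ the norms $\|\partial^\beta v_k\|_{L^p}$ rescale with different powers $2^{k|\beta|}$, so ``the $\lambda^\mu$-homogeneity of $\tilde A$ exactly matches the $2^k$-scaling of the $H^s_p$-norm'' is false for $s\neq 0$. Tracking the inequalities gives a residual factor $2^{ks}$ (for $s>0$) that blows up, so the pieces do not reassemble into a uniform bound without substantial extra work (e.g.\ a Littlewood--Paley argument). Simply drop this step and use the bounded-coefficients observation instead.
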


Idea of the {\em proof}. Near $x=0$, the spaces $\cH^{s,\gamma}_p$ and $\cK^{s,\gamma}_p$ coincide. Thus the assertion for this part follows from Theorem \ref{2.14}. Away from $x=0$, the map $\chi$ introduced in Definition \ref{2.11} leads to a transformation of the cylinder $\R_+\times \partial \B$  to an outgoing (opening) cone with cross-section $\partial \B$, and the derivatives $\partial_{y_j}$, in local coordinates on $\partial \B$, transform to  $x\partial_{y_j}$. Hence  $\chi_*\widehat A$
is a differential operator of order $\mu$ with coefficients  bounded in all derivatives. It therefore yields a bounded map $H_p^{s+\mu}\to H_p^s$ for the Sobolev spaces on the outgoing cone. 
\hfill$\Box$

\begin{example}The model cone operator of the Laplace-Beltrami operator $\Delta$  is 
$$\widehat \Delta = x^{-2} \big((-x\partial_x)-(n-1)(-x\partial_x) +\Delta_{h(0)}\big),$$ 
where $\Delta_{h(0)}$ is the Laplace-Beltrami operator on $\partial \B$ given by the metric $h(0)$. 
\end{example} 

\subsection{Closed Extensions}\label{Sect.3.2}
An elliptic pseudodifferential operator of order $\mu$ on a closed manifold $M$, considered as an unbounded operator in $H^s_p(M)$, $s\in\R$, $1<p<\infty$,  has precisely one closed extension, namely that with domain $H^{s+\mu}_p(M)$. 
For cone differential operators the situation is quite different. In the sequel, we will denote by $\scrD(B)$ the domain of a given operator $B$. 

\begin{remark}\label{R3.3} 
{\bf (Minimal and maximal extension).}
 For  $s,\gamma\in \R$ and $1<p<\infty$ consider the cone differential operator  $A=x^{-\mu}  \sum_{j=0}^\mu a_j(x) (-x\partial_x)^j $ as an unbounded operator in the space $\cH^{s,\gamma}_p(\B)$ with initial domain $C^\infty_c(\Int \B)$. Then $A$ has two canonical closed extensions, namely 
the {\em minimal extension}, i.e., the closure of $A$, and the {\em maximal extension}, whose domain consists of all $u\in \cH^{s,\gamma}_p(\B)$ for which $Au\in \cH^{s,\gamma}_p(\B)$.   
We denote these extensions  by $A_{\min}$ and $A_{\max}$, respectively.  A similar statement holds for $\widehat A$ in $\cK^{s,\gamma}_p(\R_+\times \partial \B)$. Our next goal is to understand these two extensions. 
\end{remark}

\begin{theorem}\label{T3.4}
Let $A$ be a cone differential operator of order $\mu$ that is elliptic with respect to the line  $\Gamma_{(n+1)/2-\gamma-\mu}$ in the sense of Definition \ref{2.15}. Consider $A$ as an unbounded operator in $\cH^{s,\gamma}_p(\B)$ for some choice of $s, \gamma\in \R$ and $1<p<\infty$. Then

\noindent{\rm (a)} $
\scrD(A_{\min}) = \cH^{s+\mu,\gamma+\mu}_p(\B)\quad\text{and }\quad
\scrD(\widehat A_{\min}) = \cK^{s+\mu,\gamma+\mu}_p(\R_+\times \partial \B).
$

\noindent{\rm (b)} If $A$ only satisfies ellipticity condition (i) in Definition \ref{2.15}, the minimal domains are
\begin{eqnarray*}
\scrD(A_{\min} ) &=& \left\{ u\in \bigcap_{\varepsilon>0} \cH^{s+\mu, \gamma+\mu-\varepsilon}_p(\B) :
Au\in \cH^{s,\gamma}_p(\B)\right\}\, \text{ and} \\
\scrD(\widehat A_{\min} ) &=& \left\{ u\in \bigcap_{\varepsilon>0} \cK^{s+\mu, \gamma+\mu-\varepsilon}_p(\R_+\times \partial \B) : \widehat Au\in \cK^{s,\gamma}_p(\R_+\times \partial \B)\right\}.
\end{eqnarray*}

\noindent{\rm (c)} In both cases, the maximal domains are 
\begin{eqnarray*}
\scrD(A_{\max}) = \scrD(A_{\min}) \oplus \scrE\quad \text{and } \quad
\scrD(\widehat A_{\max}) = \scrD(\widehat A_{\min}) \oplus \widehat \scrE,
\end{eqnarray*}
where $\scrE$ and $\widehat \scrE$ are finite-dimensional spaces of smooth functions on $\Int \B$ with special asymptotics near $\partial \B$ that can be determined explicitly. 
There is a canonical isomorphism 
\begin{eqnarray}
\label{theta}
\Theta: \scrD(A_{\max})\to \scrD(\widehat A_{\max}).
\end{eqnarray}
\end{theorem}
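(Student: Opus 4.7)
The plan is to analyze everything through the Mellin transform in the $x$-variable, which converts $-x\partial_x$ into multiplication by $z$ and reduces $\widehat A$ near $x=0$ to multiplication by the conormal symbol $\sigma_M(A)(z)$ followed by division by $x^\mu$. All three parts of the theorem are governed by the poles of $z\mapsto\sigma_M(A)(z)^{-1}$ in the open strip $(n+1)/2-\gamma-\mu<\re z<(n+1)/2-\gamma$. I would first settle (a) and (b), then use the model cone operator as a tool to construct the asymptotic spaces in (c), and finally compare them via $\Theta$.

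For (a), I would combine Theorem \ref{2.17} with density. Full ellipticity with respect to $\Gamma_{(n+1)/2-\gamma-\mu}$ gives the Fredholm property of $A\colon \cH^{s+\mu,\gamma+\mu}_p(\B)\to \cH^{s,\gamma}_p(\B)$, and hence the a priori estimate
$$\|u\|_{\cH^{s+\mu,\gamma+\mu}_p(\B)}\le C\bigl(\|Au\|_{\cH^{s,\gamma}_p(\B)}+\|u\|_{\cH^{s,\gamma}_p(\B)}\bigr).$$
Since $C^\infty_c(\Int\B)$ is dense in $\cH^{s+\mu,\gamma+\mu}_p(\B)$, this forces $\scrD(A_{\min})=\cH^{s+\mu,\gamma+\mu}_p(\B)$; the identical argument applies to $\widehat A$ on the $\cK$-spaces. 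For (b), when only the interior ellipticity (i) is assumed, the same argument works with $\gamma$ replaced by any $\gamma-\varepsilon$ for which $\Gamma_{(n+1)/2-\gamma-\mu+\varepsilon}$ avoids the discrete non-bijectivity set of $\sigma_M$. Intersecting over such $\varepsilon$ yields the stated description.

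The heart of the theorem is (c). For the model cone I would apply the Mellin transform to $u\in \scrD(\widehat A_{\max})$; the identity $\widehat Au=x^{-\mu}\sigma_M(A)(-x\partial_x)u$ lets one recover $u$ as a Mellin contour integral of $\sigma_M(A)^{-1}$ against $\widehat Au$ along $\Gamma_{(n+1)/2-\gamma-\mu}$. Shifting the contour up to $\Gamma_{(n+1)/2-\gamma}$ picks up residues at the poles of $\sigma_M(A)^{-1}$ lying in the strip. By standard meromorphic Fredholm theory applied to the entire operator family $\sigma_M(A)\colon \C\to \scrL(H^\mu(\partial\B),L^2(\partial\B))$, each pole $q$ of order $\ell$ contributes a finite-dimensional space of functions of the form $\omega(x)\sum_{k<\ell} c_k(y)\,x^{-q}\ln^k x$, where the $c_k$ are determined by the Laurent coefficients of $\sigma_M(A)^{-1}$ at $q$ (compare \eqref{inverse}, \eqref{qj} for the Laplacian). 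Collecting these residue spaces defines $\widehat\scrE$; it is a direct complement of $\scrD(\widehat A_{\min})$ because Lemma \ref{2.8} prevents any nonzero such asymptotic function from lying in $\cK^{s+\mu,\gamma+\mu-\varepsilon}_p(\R_+\times\partial\B)$ for all small $\varepsilon$. For the full operator $A$, one transports these same asymptotic functions by a cut-off $\omega$ and checks that the lower-order-in-$x$ correction
$$A-\widehat A=x^{-\mu}\sum_{j=0}^\mu\bigl(a_j(x)-a_j(0)\bigr)(-x\partial_x)^j$$
maps each candidate into $\cH^{s,\gamma}_p(\B)$, which holds because $a_j(x)-a_j(0)=\mathcal O(x)$ supplies the missing power of $x$.

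The canonical isomorphism $\Theta$ is then defined on the asymptotic level: send an element of $\scrE$ to the element of $\widehat\scrE$ with the same Mellin residue data $(q,k,c_k)$. Since both $\scrE$ and $\widehat\scrE$ are canonically parametrized by the principal parts of $\sigma_M(A)^{-1}$ in the strip, $\Theta$ is bijective, and it extends to an isomorphism $\scrD(A_{\max})\to\scrD(\widehat A_{\max})$ after fixing the identification of the minimal parts via a right inverse to the quotient. The main obstacle in my view is the rigorous construction of the asymptotic spaces and the splitting in (c): the Mellin inversion has to be carried out in the $L^p$ setting rather than $L^2$, requiring vector-valued Paley–Wiener estimates, together with careful bookkeeping at higher-order poles such as the double pole at $z=0$ that appears for the Laplacian when $n=1$ (cf.\ Example \ref{2.18}).
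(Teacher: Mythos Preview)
Your strategy for (a) and (b) is the standard one and matches what the cited references (Lesch, Gil--Krainer--Mendoza, Schrohe--Seiler) do; the paper itself does not give a proof here but refers to the literature and then explains the structure of $\scrE$ and $\widehat\scrE$ via the Mellin operators $G_\sigma$ and $\widetilde G_\sigma$ in \eqref{Gsigma1}.

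There is, however, a genuine gap in your treatment of (c) for the full operator $A$. You claim that one can take the asymptotic functions in $\widehat\scrE$, multiply by a cut-off $\omega$, and obtain $\scrE$ because ``$a_j(x)-a_j(0)=\mathcal O(x)$ supplies the missing power of $x$''. This is false in general. If $u=\omega(x)x^{-q}e(y)\in\widehat\scrE_q$ with $q$ in the upper part of the strip, say $q>(n+1)/2-\gamma-\mu+1$, then $(A-\widehat A)u$ is of order $x^{-q-\mu+1}$, and by Lemma~\ref{2.8} this does \emph{not} lie in $\cH^{s,\gamma}_p(\B)$. Consequently $Au\notin\cH^{s,\gamma}_p(\B)$, so $u\notin\scrD(A_{\max})$. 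The correct construction of $\scrE_q$ requires adding lower-order correction terms---precisely the $\widetilde G_\sigma$ contributions in \eqref{Gsigma1}---obtained by solving away these errors iteratively with $\sigma_M(A)^{-1}$ at shifted arguments. Theorem~\ref{3.7} makes this explicit for the Laplacian: $\scrE_{q_j^-}$ consists of functions of the form $\omega(x)\big(x^{-q_j^-}+x^{-q_j^-+1}(b^{(0)}_{q_j^-}+\ln x\,b^{(-1)}_{q_j^-})\big)e(y)$, not just $\omega(x)x^{-q_j^-}e(y)$. Only when $\partial_x h(x)|_{x=0}=0$ (Remark~\ref{3.8}(a)) or when the correction terms already fall into $\cH^{s+\mu,\gamma+\mu}_p(\B)$ (Remark~\ref{3.8}(b)) do $\scrE$ and $\widehat\scrE$ coincide.

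Your description of $\Theta$ is then also slightly off: it does not simply match ``the same Mellin residue data'' on both sides, because elements of $\scrE$ carry these extra tails. The map $\Theta$ is defined by \emph{dropping} the $\widetilde G_\sigma$ corrections, i.e.\ retaining only the leading term $x^{-q}$-part, as the paper states after \eqref{Gsigma1}.
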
 

For $A_{\min}$ and $A_{\max}$ this theorem goes back to Br\"uning and Seeley \cite{BS88} and 
Lesch \cite[Section I.1.3]{Lesch}. It has been extended by Gil, Krainer and Mendoza, see e.g.,  \cite{GM1} or \cite[Theorem 4.7]{GKM2}, where also the isomorphism $\Theta$ was constructed. 
A particularly simple way to compute the spaces $\scrE$ and $\widehat \scrE$ was presented in \cite[Theorem 3.5]{SS2}.

The spaces $\scrE$, $\widehat\scrE$ and the isomorphism $\Theta$ are given as follows: Let $\scrM: C^\infty_c(\R_+\times \partial \B)\to \scrH(\C,C^\infty( \partial \B))$ be the Mellin transform ($\scrH$ denotes holomorphic functions), defined by\footnote{In \cite[(5.1)]{Mel93} the Mellin transform is defined by $\scrM u(\lambda,y) = \int_0^\infty x^{i\lambda-1}u(x,y) dx$. } 
$$\scrM u(z,y) = \int_0^\infty x^{z-1} u(x,y) \,dx.$$ 
According to \cite[Theorem 3.4]{SS2}, the space $\widehat\scrE$ is a direct sum of spaces
$$\widehat \scrE = \bigoplus \widehat \scrE_\sigma,$$
with  $\sigma$ running  over the poles of $\sigma_M(A)^{-1}$ in the interval 
$$J_\gamma= {](n+1)/2-\gamma-\mu, (n+1)/2-\gamma[}\,.$$ 
Moreover,  
$$\widehat \scrE_\sigma =\omega\, \text{\rm range} \, G_\sigma$$
for a  cut-off function  $\omega$ and  $G_\sigma : C^\infty_c(\R_+\times \partial \B)\to C^\infty(\R_+\times \partial \B)$  given by 
$$(G_\sigma u) (x,y) = \frac1{2\pi i} \int_{|z-\sigma|=\varepsilon} t^{-z} \sigma_M(A)^{-1} (z) \scrM u(z,y) \, dz
$$
for sufficiently small $\varepsilon >0$. The functions in $\widehat \scrE_\sigma$ then are  of the form $\omega(x)\, x^{-\sigma} \ln^k\!x \, e(y)$. Here, $k\in \{0,1,\ldots, m-1\}$, where $m$ is the order of the pole in $\sigma$, and $e$ are certain smooth functions on $\partial \B$. 
Similarly, the space $\scrE$ is a direct sum 
$$ \scrE= \bigoplus_{\sigma\in J_\gamma} \scrE_\sigma.$$
Now, however, 
\begin{eqnarray}
\label{Gsigma1}
\scrE_\sigma = \omega\, \text{\rm range}\,(G_\sigma + \widetilde G_\sigma ),
\end{eqnarray}
where the range of $\widetilde G_\sigma$ consists of functions that are $o(x^{-\sigma})$, see 
\cite[Theorem 3.5]{SS2} for the precise form. 
The map $\Theta$  in \eqref{theta} then is given by omitting the contribution of $\widetilde G_\sigma$. 

This will become clearer, when we see the maximal domain of $\Delta$ as an example.

\begin{corollary}\label{3.5} Let $A$ be elliptic of order $\mu$ with respect to the line $\Gamma_{(n+1)/2-\gamma -\mu}$. The domain of any closed extension $\underline A$  of $A$, considered as an unbounded operator in $\cH^{s,\gamma}_p(\B)$, is of the form 
\begin{eqnarray*}
\scrD(\underline A)&=& \cH^{s+\mu,\gamma+\mu}_p(\B) \oplus \underline\scrE,
\end{eqnarray*}
where $\underline\scrE$ is a subspace of $\scrE$. Conversely, any such domain defines a closed extension. 

Similarly,  the domain of any closed extension $\underline{\widehat A}$ of $\widehat A$, considered as an unbounded operator in $\cK^{s,\gamma}_p(\R_+\times \partial \B)$ is of the form 
\begin{eqnarray*}
\scrD(\underline{\widehat A}) &=& \cK^{s+\mu,\gamma+\mu}_p(\R_+\times\partial \B )\oplus \underline{\widehat \scrE}
\end{eqnarray*}
for a subspace $\underline{\widehat \scrE}$ of $\widehat \scrE$, and any such  domain defines a closed extension. 
\end{corollary}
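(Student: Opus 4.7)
The plan is to reduce the assertion to the standard fact that closed extensions of a closable unbounded operator $A_0$ with closure $A_{\min}$ and maximal extension $A_{\max}$ are in bijection with closed subspaces of $\scrD(A_{\max})$, equipped with the graph norm, that contain $\scrD(A_{\min})$. Here $A_0$ is the restriction of $A$ to $C^\infty_c(\Int\B)$, viewed as an unbounded operator in $\cH^{s,\gamma}_p(\B)$; by Remark \ref{R3.3} this has $A_{\min}$ as its closure, and any closed extension $\underline A$ of $A$ satisfies $A_{\min}\subseteq \underline A\subseteq A_{\max}$, so that $\scrD(A_{\min})\subseteq \scrD(\underline A)\subseteq \scrD(A_{\max})$.

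The next step is to exploit the decomposition from Theorem \ref{T3.4}(a),(c). Part (a) identifies $\scrD(A_{\min})=\cH^{s+\mu,\gamma+\mu}_p(\B)$ and part (c) gives $\scrD(A_{\max})=\scrD(A_{\min})\oplus\scrE$ with $\scrE$ finite-dimensional. Hence the quotient $\scrD(A_{\max})/\scrD(A_{\min})\cong \scrE$ is finite-dimensional, and any intermediate linear subspace $\scrD(\underline A)$ admits (by a purely algebraic argument on the quotient) a unique decomposition
\begin{eqnarray*}
\scrD(\underline A)=\cH^{s+\mu,\gamma+\mu}_p(\B)\oplus\underline\scrE,\qquad \underline\scrE\subseteq \scrE.
\end{eqnarray*}
This proves one direction once we know that $\underline A$ is indeed of this form.

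The remaining point, and the only mildly subtle one, is that \emph{every} subspace $\underline\scrE\subseteq \scrE$ actually produces a closed extension; equivalently, that the subspace $\cH^{s+\mu,\gamma+\mu}_p(\B)\oplus \underline\scrE$ is closed in $\scrD(A_{\max})$ with respect to the graph norm $\|\cdot\|_A:=\|\cdot\|_{\cH^{s,\gamma}_p(\B)}+\|A\cdot\|_{\cH^{s,\gamma}_p(\B)}$. Since $A_{\max}$ is closed, $\scrD(A_{\max})$ is a Banach space in this norm, and $\scrD(A_{\min})$ is a closed subspace by construction. As $\underline\scrE$ is finite-dimensional it is automatically closed, and the algebraic direct sum of a closed subspace and a finite-dimensional subspace in a Banach space is always closed. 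Hence $\cH^{s+\mu,\gamma+\mu}_p(\B)\oplus\underline\scrE$ is closed and defines a closed operator whose graph lies between those of $A_{\min}$ and $A_{\max}$, i.e.\ a closed extension of $A$.

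The proof for $\widehat A$ is identical, using Theorem \ref{T3.4}(a),(c) for the model cone operator and $\cK^{s,\gamma}_p(\R_+\times\partial\B)$ in place of $\cH^{s,\gamma}_p(\B)$. The only real ingredient beyond linear algebra is the closedness of finite-dimensional perturbations in a Banach space, so there is no genuine obstacle; the content of the corollary is entirely carried by Theorem \ref{T3.4}.
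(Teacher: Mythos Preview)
Your argument is correct and is precisely the reasoning the paper intends: the corollary is stated without proof, as an immediate consequence of Theorem \ref{T3.4}, and your write-up just makes explicit the standard linear-algebra step (intermediate subspaces of a Banach space $\scrD(A_{\min})\oplus\scrE$ with $\dim\scrE<\infty$ that contain $\scrD(A_{\min})$ are exactly $\scrD(A_{\min})\oplus\underline\scrE$ for $\underline\scrE\subseteq\scrE$, and each is automatically closed in the graph norm). There is nothing to add.
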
  



\begin{example}\label{3.6}
Consider the Laplace-Beltrami operator $\Delta$  associated with the metric $g$ in \eqref{eq.g} and the corresponding model cone operator $\widehat \Delta$ as unbounded operators in $\cH^{s,\gamma}_p(\B)$ and $\cK^{s,\gamma}_p(\R_+\times \partial\B)$, respectively. 
We introduce the interval
\begin{eqnarray}
\label{3.6.1}
I_\gamma = \left] \frac{n+1}2-\gamma-2, \frac{n+1}2-\gamma \right[.
\end{eqnarray}
Given the $q_j^\pm$ and the spaces $E_j$ from Example \ref{2.18}  the function spaces 
$\widehat \scrE_{q_j^\pm}$  are as follows: For $q_j^\pm$, $j\not=0$ or $n>1$ (i.e., whenever $q_j^\pm$ is a simple pole of $\sigma_M(\Delta)^{-1}$), let
 \begin{eqnarray}\label{3.3}
\widehat\scrE_{q_j^\pm} &=& \{ u\in C^\infty(\R_+\times \partial\B): 
u(x,y) = x^{-q_j^\pm}\omega(x) e_j(y), e_j\in E_j \},
\end{eqnarray}
 while, for $j=0$ and $n=1$, i.e., when $q_0^+=q_0^-=0$ is a double pole, 
 \begin{eqnarray}\label{3.4}
\lefteqn{
\widehat \scrE_0 
= \{ u\in C^\infty(\R_+\times \partial\B): }\\
&&u(x,y) = \omega(x) (e_0(y) +e_0'(y) \ln x); e_0,e_0'\in E_0 \}.\nonumber
\end{eqnarray}
Then 
\begin{eqnarray}
\widehat \scrE = \bigoplus_{q_j^\pm\in I_\gamma} \widehat\scrE_{q_j^\pm}.
\end{eqnarray}
If $\B$ has straight conical singularities, i.e., the metric $h$ in \eqref{eq.g} is independent of $x$, 
\begin{eqnarray}
\label{eq.3.6}
 \scrE = \widehat \scrE,
\end{eqnarray}
where we  consider the functions $x^{-q_j^\pm}\omega(x) e_j(y)$, $q_j^\pm\in I_\gamma$,  and possibly also $\omega(x)e_0'(y)\ln x$  as elements of $C^\infty(\Int \B)$, supported in the collar neighborhood of
$\partial \B$. 
When $h$ depends on $x$, the structure of $\scrE$ may become more complicated, as we shall see in  Theorem \ref{3.7}, below.

We will next determine the maximal extension of $\Delta$ in $\cH^{s,\gamma}_p(\B)$ for a special choice of $\gamma$ that will later allow us to choose extensions of $\Delta$ that are particularly suitable for the applications we have in mind. Choose $k$ such that  the $q_j$ in \eqref{qj} satisfy 
$$ q^-_{k+1}\le -2 < q_k^-< \ldots <q_1^-<q_0^-=0\le q_0^+ <q_1^+<\ldots. $$
We then fix $\gamma$ with
\begin{eqnarray}\label{gamma1}
-2<\frac{n+1}2-\gamma-2 < q_k^-\ \text{ and }\ \ \frac{n+1}2-\gamma \not= q_j^+, \ j=0,1,\ldots .
\end{eqnarray}

The reason for this choice is the following: The interval $I_\gamma$ has length $2$ and will therefore contain $q_0^-=0$. Hence the maximal domain will contain $\scrE_{q_0^-}$ and therefore functions that are constant near $\partial \B$. We want these to belong to the domain of the extension we plan to work with. Moreover, $I_\gamma$ will contain as many of the $\scrE_{q_j^-}$, $j\ge 1$,  as possible. The functions in these spaces will show the asymptotics of the solutions near the conic points. We thus also want them to belong to the extension. Finally, the maximal domain can also contain  elements in $\scrE_{q_j^+}$ for $q_j^+>0$ that may blow up as $x\to 0^+$ and therefore should not belong to the domain of the extension.

In order to determine $\scrE_{q_j^-}$ for $j=0,\ldots, k$, introduce the operator-valued  function 
\begin{eqnarray}\label{eq.3.7}
b(z) =
- \sigma_M(\Delta)^{-1}(z-1)\left( (-\partial_x H)|_{x=0} z + (\partial_x\Delta_{h(x)})|_{x=0}\right)
\end{eqnarray}
with $H$ defined in \eqref{eq.H}.  As $0$ is the only possible double pole of $\sigma_M(A)^{-1}$, we can write $b$ near  $z=q_j^-$, $j\ge 1$,   in the form 
\begin{eqnarray}\label{eq.3.8}
b(z) \equiv b^{(-1)}_{q_j^-}(z-q_j^-)^{-1} +  b^{(0)}_{q_j^-}
\end{eqnarray}
modulo functions that are holomorphic near $q_j^-$ and vanish in $q_j^-$. 
This leads to: 
\end{example}

\begin{theorem}\label{3.7}
Consider $\Delta$ as an unbounded operator in $\cH^{s,\gamma}_p(\B)$ for arbitrary $s\in\R$, $1<p<\infty$, and $\gamma$ as in \eqref{gamma1}. Then
\begin{eqnarray*} 
\scrD(\Delta_{\max}) = \cH^{s+2,\gamma+2}_p(\B) \oplus \bigoplus_{q^\pm_j\in I_\gamma}\scrE_{q_j^\pm}.
\end{eqnarray*} 
The spaces $\scrE_{q_j^-} $, $q_j^-\in I_\gamma$,  are determined from \eqref{eq.3.8} and the spaces $E_j$ defined in Example \ref{2.18}: \footnote{The spaces $\scrE_{q_j^+}$ can be determined in the same way, but we will not need them. }

\noindent{\rm(i)} 
If $q_j^- $ is a simple pole of $\sigma_M(\Delta)^{-1} $, then 
\begin{eqnarray*} 
\scrE_{q_j^-} &=&  \big\{ u\in C^\infty(\Int \B)\colon \\&& u(x,y) = \omega(x) \big(x^{-q_j^-} 
+x^{-q_j^-+1}\big(b^{(0)}_{q_j^-} +  (\ln x)\, b_{q_j^-}^{(-1)}\big)\big) e(y)  ; e \!\in E_j \big\}.
\end{eqnarray*}


\noindent{\rm(ii)} If  $j=0$ and  $n=1$, i.e., $0=q_0^-=q_0^+$ is a double pole of $\sigma_M(\Delta)^{-1}$: 
\begin{itemize} 
\item In case $-1$ also is  a pole of $\sigma_M(\Delta)^{-1}$, say for $q_\ell^-=-1$, we can write, 
modulo functions that are holomorphic near $z=0$ and vanish for $z=0$,
\begin{eqnarray*}
\sigma_M(\Delta)^{-1} (z-1) \equiv -\frac12\frac{\pi_\ell}z+S_0
\end{eqnarray*}
with $\pi_\ell$ from \eqref{inverse} and a suitable operator $S_0$. Then
\begin{eqnarray}\label{scrE0}
&&\scrE_0 = \Big\{u\in C^\infty(\Int(\B)):\\
\nonumber
&&u(x,y) =\omega(x) \Big(e_0(y)+ e_0'(y)\ln x +\frac x2\pi_\ell((\partial_xH)|_{x=0}) e_0)(y)\\ 
\nonumber
&&-\frac{ x}2\left( (\ln x) \pi_\ell ((\partial_xH)|_{x=0} e_0')(y) -S_0((\partial_xH)|_{x=0} e_0')(y)\right)\Big) ; \, e_0,e_0'\in E_0\Big\}.
\end{eqnarray}

\item In case  $-1$ is not a pole of $\sigma_M(\Delta)^{-1}$,  
\begin{eqnarray*}
\scrE_0 &=& \big\{u\in C^\infty(\Int(\B)): u(x,y) = \omega(x)\big(e_0(y) +e_0'(y)\ln x \\
&&-x(1+\Delta_{h(0)})^{-1} ((\partial_xH)|_{x=0} e_0')(y)\big); \, e_0,e_0'\in E_0\big\}.
\end{eqnarray*}
\end{itemize}
Here we identify a collar neighborhood of $\partial \B$ with  $\overline \R_+\times \partial \B$.  
\end{theorem} 

\begin{proof}
See the appendix of  \cite{RS5}.
\end{proof} 

\begin{remark}\label{3.8} 
{\rm (a)} By \eqref{eq.3.6},  $\scrE =\widehat {\scrE} $, if $h$ in \eqref{eq.g} is independent of $x$. The above result in connection with \eqref{eq.3.7} shows that it suffices that $\partial_x H(x)|_{x=0}=0$ for this to hold.

(b) Since, for arbitrary $s$, $\gamma$, and $1<p<\infty$, $x^{-q}(\ln^kx) c(y) \in \cH^{s+2,\gamma+2}_p(\B)$ for $q<(n+1)/2-\gamma-2$, $k\in \N_0$ and $c\in C^\infty(\partial \B)$, we may omit in Theorem \ref{3.7} all terms $x^{-q_j^-+1}(\ln^kx)e_j$ where $q_j^-<(n+1)/2-\gamma-1$. In particular, $\scrE=\widehat \scrE$ when $(n+1)/2-\gamma-2>-1$. 

(c) At first glance, it seems that $\scrE_0$ in \eqref{scrE0} might not contain the functions that are constant near $\partial\B$. A closer look, however,  shows that, due to the projection $\pi_\ell$, the term  $\frac x2\pi_\ell((\partial_xH|_{x=0}e_0)$ actually also belongs to $\scrE_{q_\ell^-}$  
\end{remark} 


\section{Maximal Regularity, the Cl\'ement-Li Theorem and Applications}
 
Let $\X_0,\X_1$ be Banach spaces, $\X_1$ densely embedded in $\X_0$. We assume them to be UMD spaces, see \cite[Section 4.2]{HVW} for a definition and many examples. 
All the spaces $\cH^{s,\gamma}_p(\B)$ and $\cK^{s,\gamma}_p(\R_+\times \partial \B)$, for $s,\gamma\in \R$, $1<p<\infty$, and their closed subspaces are UMD spaces.

\begin{definition}\label{MaxReg} Let $1<q<\infty$, $T>0$, and let $-B\in \scrL(\X_1,\X_0)$ be the infinitesimal generator of an analytic semigroup in $\X_0$ with domain $\scrD(B)=\X_1$.  Then $B$ is said to have maximal $L^q$ regularity, provided that the initial value problem in $L^q(0,T;\X_0)$
\begin{eqnarray}\label{MR}
\partial_t u + Bu =f, \quad u(0) = u_0,
\end{eqnarray}
with data $f\in L^q(0,T; \X_0)$ and $u_0$ in the real interpolation space $(\X_0,\X_1)_{1-1/q,q}$
has a unique solution
$$u\in W^{1}_q(0,T; \X_0)\cap L^q(0,T;\X_1)$$
depending continuously on the data.  
\end{definition} 


The importance of maximal regularity is illustrated by the following theorem, due to Cl\'ement and Li,
 \cite[Theorem 2.1]{CL}; see also \cite{P02} for a slightly more general version:

\begin{theorem} \label{ThmCL} In $L^{q}(0,T_{0};\X_{0})$,  $1<q<\infty$, $T_0>0$, consider the quasilinear  problem 
\begin{gather}\label{QL}
\partial_tu(t)+A(u(t))u(t)=f(t,u(t))+g(t), \;\; t\in(0,T_{0}),\quad u(0)=u_{0},	
\end{gather}
for $u_0$ in the real interpolation space $(\X_0,\X_1)_{1-1/q,q}$.
 
Assume that there exists an open neighborhood $U$ of $u_0$ in  $(\X_0,\X_1)_{1-1/q,q}$ such that $A(u_0): \X_{1}\rightarrow \X_{0}$ has maximal $L^{q}$-regularity and \\
{\rm (H1)} $A\in C^{1-}(U, \scrL(\X_1,\X_0))$, \\
{\rm (H2)}  $f\in C^{1-,1-}([0,T_0]\times U, \X_0)$,\\
{\rm (H3)} $g\in L^q(0,T_0; \X_0)$,\\
where `$C^{1-}$' denotes Lipschitz continuous functions.  Then there exist a $T>0$ and a unique solution $u\in W^1_q(0,T;\X_0) \cap L^q(0,T;\X_1)$ to  \eqref{QL} on $(0,T)$.
\end{theorem}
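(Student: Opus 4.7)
The plan is to recast \eqref{QL} as a fixed-point equation for a linear problem with coefficients frozen at $u_0$, and then apply the contraction principle on a small ball in the maximal regularity space. Set $\mathbb E_T := W^1_q(0,T;\X_0)\cap L^q(0,T;\X_1)$, $0<T\le T_0$, equipped with its natural norm. A classical trace/interpolation result gives
$$\mathbb E_T \hookrightarrow C([0,T];(\X_0,\X_1)_{1-1/q,q}),$$
and, on the affine subspace of functions taking the value $u_0$ at $t=0$, the embedding constant can be taken uniformly in $T\in(0,T_0]$. Since $A(u_0)$ has maximal $L^q$-regularity, I would first solve $\partial_t u_* + A(u_0)u_* = g$, $u_*(0)=u_0$, obtaining a reference solution $u_*\in \mathbb E_{T_0}$.

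Rewrite \eqref{QL} as $\partial_t u + A(u_0)u = F(u)$, $u(0)=u_0$, where
$$F(v)(t) := \bigl(A(u_0)-A(v(t))\bigr)v(t) + f(t,v(t)) + g(t),$$
and define the solution operator $\Psi$ by letting $\Psi(v)\in \mathbb E_T$ be the unique solution of $\partial_t u + A(u_0)u = F(v)$, $u(0)=u_0$. Maximal regularity of $A(u_0)$ makes $\Psi$ well defined with an estimate $\|\Psi(w_1)-\Psi(w_2)\|_{\mathbb E_T}\le M\,\|F(w_1)-F(w_2)\|_{L^q(0,T;\X_0)}$ whose constant $M$ is independent of $T\in(0,T_0]$. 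On the closed set
$$\Sigma_{R,T} := \{v\in \mathbb E_T : v(0)=u_0,\ \|v-u_*\|_{\mathbb E_T}\le R\},$$
the $T$-uniform embedding guarantees, for sufficiently small $R$ and $T$, that every $v\in\Sigma_{R,T}$ takes values in the neighborhood $U$ where (H1) and (H2) apply. Using (H1)--(H3) and the same embedding, I would derive
$$\|F(v_1)-F(v_2)\|_{L^q(0,T;\X_0)} \le \bigl(\kappa(R,T) + C\,T^{1/q}\bigr)\,\|v_1-v_2\|_{\mathbb E_T},$$
where $\kappa(R,T)\to 0$ as $T\to 0$ because $v_j(t)$ remains close to $u_0$ in the trace space on short intervals, so that the Lipschitz factors $\|A(u_0)-A(v_j(t))\|_{\scrL(\X_1,\X_0)}$ become uniformly small. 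Combining these estimates and choosing first $R$ and then $T$ small enough makes $\Psi$ a self-map and a strict contraction on $\Sigma_{R,T}$; the Banach fixed-point theorem then produces the unique solution $u\in \mathbb E_T$.

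The main obstacle is precisely the $T$-uniform trace/embedding estimate: one must show that $\sup_{t\in[0,T]}\|v(t)-u_0\|_{(\X_0,\X_1)_{1-1/q,q}}$ is controlled by $\|v-u_*\|_{\mathbb E_T}$ with a constant independent of $T\in(0,T_0]$, and that this quantity tends to $0$ with $T$ on the subspace $\{v(0)=u_0\}$. This is the technical core of the maximal-regularity machinery for quasilinear problems; once it is in place, the Lipschitz hypotheses (H1) and (H2) translate directly into the contraction constants needed to run the Cl\'ement--Li argument, with (H3) simply ensuring that the inhomogeneity lives in the target space of the maximal-regularity inverse.
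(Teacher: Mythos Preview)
The paper does not prove this theorem; it merely states it and cites the original source \cite[Theorem 2.1]{CL} together with Pr\"uss' lecture notes \cite{P02}. Your sketch is precisely the standard Cl\'ement--Li contraction argument from those references: freeze the operator at $u_0$, invoke maximal regularity to get a solution map, and use the trace embedding $\mathbb E_T\hookrightarrow C([0,T];(\X_0,\X_1)_{1-1/q,q})$ with a $T$-uniform constant on $\{v(0)=u_0\}$ to close the contraction for small $T$. So there is nothing to compare against in the paper itself, and your outline is correct and faithful to the cited proof.
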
 

The solution $u$ then belongs to $C([0,T]; (\X_0,\X_1)_{1-1/q,q})$  by \cite[Theorem III.4.10.2]{Amann}.\medskip

Instead of verifying the conditions in Definition \ref{MaxReg} it is often easier to establish stronger properties. 
We briefly recall  the concepts of bounded $H_\infty$-calculus, bounded imaginary powers, $\cR$-sectoriality and their relation to maximal regularity. For many more details see \cite{DHP} or \cite{KW04}.

Let $A:\scrD(A)=\X_1 \to \X_0$ be a closed unbounded linear operator. The sector 
\begin{eqnarray}\label{mingrowth}
\Lambda_\theta= \{z=re^{i\varphi}\in \C: r>0, |\varphi|\ge \theta\}
\end{eqnarray}
is said to be a sector of minimal growth for the resolvent of $A$, if $\lambda-A$ is invertible for $\lambda\in \Lambda_\theta$ and there exists a $C\ge0$ such that
\begin{eqnarray}\label{sectorial}
\|\lambda(\lambda-A)^{-1}\|_{\scrL(\X_0)}\le C, \quad \lambda\in \Lambda_\theta. 
\end{eqnarray}
Under this assumption one can define the operator $f(A)$ for any bounded holomorphic function $f:\C\setminus \Lambda_{\theta}\to \C$ (for short: $f\in H_\infty(\C\setminus \Lambda_\theta)$) by 
\begin{eqnarray*}
f(A) = \frac1{2\pi i}\int_{\partial\Lambda_{\theta}} f(\lambda) (\lambda-A)^{-1}\, d\lambda.
\end{eqnarray*}
The operator $A$ is said to admit a bounded $H_\infty$-calculus of angle $\theta$, if there exists a constant $C$, independent of $f$, such that
\begin{eqnarray*}
\|f(A)\|_{\scrL(\X_0)} \le C \|f\|_\infty, \quad f\in H_\infty(\C\setminus \Lambda_\theta).
\end{eqnarray*}
For a selfadjoint operator $A\ge 0$ on a Hilbert space $\X_0$ it follows from the functional calculus that the above estimate holds for $A+c$, $c>0$, and every bounded Borel function $f$ on the spectrum of $A$. 
In particular, if $\Delta$ is the Laplace-Beltrami operator on a closed Riemannian manifold $M$, then $c-\Delta$, $c>0$, has a bounded $H_\infty$-calculus on $\X_0=L^2(M)$.
For a Banach $\X_0$ the estimate is much more subtle. 

The notion of bounded $H_\infty$-calculus goes back to Alan McIntosh's seminal paper \cite{McI86}. 
A slightly weaker concept is that of bounded imaginary powers. 

Let $A:\scrD(A)=\X_1 \to \X_0$ be a closed unbounded linear operator satisfying \eqref{sectorial} for some angle $0<\theta<\pi$. 

\begin{definition}
$A$ is said to have bounded imaginary powers of angle $\phi$, if the powers $A^{is}$, $s\in \R$, are defined and 
$$\|A^{is}\|_{\scrL(\X_0)}\le e^{\phi |s|}, \quad s\in \R.$$
\end{definition} 

A bounded $H_\infty$-calculus for $A$ of angle $\theta<\pi/2$ implies the  boundedness of the imaginary powers for the same angle, since $z\mapsto z^{is}$  is bounded and holomorphic in 
$\C\setminus \Lambda_\theta$, see \cite[Sect. I.2]{DHP}.

Still weaker is the concept of $\cR$-sectoriality. 

\begin{definition}\label{Rbdd}
A closed linear  operator  $A: \scrD(A)=\X_1\to \X_0$ satisfying \eqref{sectorial} is called  $\cR$-sectorial of angle $\theta$, if for all $N\in \N$,  $\lambda_{1},\ldots,\lambda_{N}\in \ \Lambda_{\theta}$ and  $x_{1},\ldots ,x_{N}\in \X_0$, 
\begin{eqnarray}\label{RS1}
\Big\|\sum_{\rho=1}^{N}\epsilon_{\rho}\lambda_{\rho}(\lambda_{\rho}-A)^{-1}x_{\rho}\Big\|_{L^{2}(0,1;X_0)} \leq C \;\Big\|\sum_{\rho=1}^{N}\epsilon_{\rho}x_{\rho}\Big\|_{L^{2}(0,1;X_0)},
\end{eqnarray}
for some constant $C\geq 1$, called the {\em $\cR$-bound}, and the sequence $\{\epsilon_{\rho}\}_{\rho=1}^{\infty}$ of the Rademacher functions, see \cite[Section 1.9]{KW04}.
\end{definition} 

That the existence of a bounded $H_\infty$-calculus implies $\cR$-sectoriality for the same angle  is shown in \cite[Theorem I.4.5]{DHP}. Moreover, we have the following theorem by Weis, see \cite[Theorem 4.2]{W}:

\begin{theorem}\label{T4.2} 
In a UMD Banach space any operator that is $\cR$-sectorial  -- in particular any operator with a bounded $H_\infty$-calculus or bounded imaginary powers -- of angle less than $\pi/2$ has maximal $L^q$-regularity for any $1<q<\infty$.  
\end{theorem}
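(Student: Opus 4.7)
The strategy is to recast maximal $L^q$-regularity as the boundedness of a specific operator-valued Fourier multiplier and then apply the vector-valued Mikhlin multiplier theorem available in UMD spaces. Denote the operator by $A$. By a standard cut-off-and-extension argument, using that $-A$ generates a bounded analytic semigroup, maximal $L^q$-regularity on $(0,T)$ is equivalent to the boundedness of the map $f\mapsto Au$ on $L^q(\R;\X_0)$, where $u$ solves $\partial_t u+Au=f$ on the whole real line. A formal Fourier transform in $t$ converts this to $\widehat{Au}(\xi)=M(\xi)\hat f(\xi)$ with
\begin{equation*}
M(\xi)=A(i\xi+A)^{-1}=I-i\xi(i\xi+A)^{-1},\quad\xi\in\R\setminus\{0\},
\end{equation*}
so the task reduces to showing that $M$ is a bounded $\scrL(\X_0)$-valued Fourier multiplier on $L^q(\R;\X_0)$.

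The key tool is the operator-valued Mikhlin theorem in UMD spaces: if $M\in C^1(\R\setminus\{0\},\scrL(\X_0))$ and both families $\{M(\xi):\xi\neq 0\}$ and $\{\xi M'(\xi):\xi\neq 0\}$ are $\cR$-bounded, then $M$ defines a bounded Fourier multiplier on $L^q(\R;\X_0)$ for every $1<q<\infty$. This is the deepest ingredient and the main obstacle; it is proved via a Littlewood--Paley decomposition, uses UMD through the boundedness of the vector-valued Hilbert transform, and relies on Kahane's contraction principle to control Rademacher sums.

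It remains to verify the two $\cR$-boundedness conditions from the hypothesis that $A$ is $\cR$-sectorial of angle $\theta<\pi/2$. Since then $i\R\setminus\{0\}\subset\Lambda_\theta$, condition \eqref{RS1} applied with $\lambda_\rho=i\xi_\rho$ shows that $\{i\xi(i\xi+A)^{-1}:\xi\neq 0\}$ is $\cR$-bounded, and hence so is $\{M(\xi):\xi\neq 0\}$. For the derivative, the identity
\begin{equation*}
\xi M'(\xi)=-i\xi A(i\xi+A)^{-2}=-\bigl[i\xi(i\xi+A)^{-1}\bigr]\bigl[A(i\xi+A)^{-1}\bigr]
\end{equation*}
exhibits $\xi M'(\xi)$ as the pointwise product of two $\cR$-bounded families; since such products remain $\cR$-bounded with constant bounded by the product of the individual $\cR$-bounds, the Mikhlin hypothesis is met, and the theorem follows. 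The parenthetical cases reduce to the $\cR$-sectorial one via the already-cited implication \cite[Thm.~I.4.5]{DHP} for the $H_\infty$-calculus, and via the Dore--Venni approach in the UMD setting for bounded imaginary powers.
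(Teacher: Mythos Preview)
Your argument is correct and is precisely the strategy behind Weis' theorem. The paper, being a survey, does not prove this result at all; it simply records the statement and cites \cite[Theorem 4.2]{W}. What you have supplied is a faithful outline of Weis' own proof: reduce maximal $L^q$-regularity to the $L^q(\R;\X_0)$-boundedness of the operator-valued multiplier $M(\xi)=A(i\xi+A)^{-1}$, invoke the operator-valued Mikhlin theorem in UMD spaces (which is the heart of \cite{W}), and verify the $\cR$-boundedness of $\{M(\xi)\}$ and $\{\xi M'(\xi)\}$ via the factorization you wrote down. Note that with the paper's sign convention $\Lambda_\theta=\{re^{i\varphi}:|\varphi|\ge\theta\}$ and the resolvent $(\lambda-A)^{-1}$, one has $i\xi(i\xi+A)^{-1}=\lambda(\lambda-A)^{-1}$ for $\lambda=-i\xi\in\Lambda_\theta$, so the $\cR$-sectoriality hypothesis applies directly as you claim.

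The only remark concerns the parenthetical case of bounded imaginary powers. The Dore--Venni route you mention is a valid, classical way to obtain maximal regularity from BIP in UMD spaces; alternatively one can remain inside the $\cR$-sectorial framework, since in a UMD space BIP of angle $<\pi/2$ already implies $\cR$-sectoriality of angle $<\pi/2$ (see \cite{KW04}). The paper does not indicate which of these it intends.
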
 

\begin{remark} \label{R4.3}
In order to show that $A$ has maximal $L^q$-regularity, it suffices to show this for $A+c$, $c>0$. 
In fact, given $f\in L^q(0,T;\X_0) $ and $u_0 \in (\X_0,\X_1)_{1-1/q,q}$ we then find a unique  $u_c\in W^1_q(0,T; \X_0) \cap L^q(0,T; \X_1)$ solving 
$$\partial_t u_c + (A+c)u_c= e^{-ct} f, \quad u_c(0) = u_0.$$
But then $u=e^{ct} u_c\in W^1_q(0,T; \X_0) \cap L^q(0,T; \X_1)$ is the unique solution to 
$$\partial_t u + Au= f, \quad u(0) = u_0.$$
\end{remark} 

\subsection{The Laplace-Beltrami Operator}

For some closed extensions $\underline \Delta$,  the operator  $-\underline\Delta+c$, $c>0$ large, has a bounded  $H_\infty$-calculus. In order to show this, we introduce the following notation for the asymptotics spaces $\widehat{\scrE}_{q_j^\pm}$ of $\widehat\Delta$ defined in \eqref{3.3} and \eqref{3.4}: 
Given a subspace $\widehat{\underline{\scrE}}_{q_j^\pm}$ of $\widehat\scrE_{q_j^\pm}$ of the form 
\begin{eqnarray*}
\widehat{\underline{\scrE}}_{q_j^\pm} = 
\{u\in C^\infty(\R_+\times \partial \B): u(x,y) = \omega x^{-q_j^\pm} e_j(y) ; e_j\in \underline E_j\subseteq E_j\} ,
\end{eqnarray*}
where $\underline E_j$ is a subspace of the space  $E_j$ introduced in Example \ref{2.18}, we let $\underline E_j^\perp$ be the orthogonal complement of $\underline E_j$ in $E_j$ and define
for  $n\not=1$ or $j\not=0$
\begin{eqnarray*}
\widehat{\underline\scrE}_{q_j^\pm}^\perp = 
\{u\in C^\infty(\R_+\times \partial \B): u(x,y) = \omega x^{-q_j^\pm} e_j(y) : e_j\in \underline E_j^\perp\subseteq E_j\} .
\end{eqnarray*}
 For $n=1$ and $q_0^\pm=0$, when $\widehat\scrE_0$ is given by 
\eqref{3.4}, let  
$$
\widehat{\underline\scrE}_0^\perp =
\begin{cases}
\widehat{\scrE}_0, \quad \widehat{\underline\scrE}_0= \{0\},\\ 
\widehat{\underline\scrE}_0, \quad\widehat{\underline\scrE}_0 = \{\omega(x) e_0: e_0\in E_0\},\\
 \{0\},\quad \text{else}.
\end{cases}
$$
A key result then is the following theorem that goes back to \cite{SS1} and \cite{SS2}; see also \cite{GKM1} for resolvent estimates. 
\begin{theorem}
\label{HDelta}
Let $s\ge0$, $|\gamma|<(n+1)/2$  and
\begin{eqnarray}
\label{4.3}
\frac{n+1}2-\gamma, \frac{n+1}2-\gamma-2\notin \{q_j^\pm: j=0,1,\ldots\}. 
\end{eqnarray}
Moreover let $\underline \Delta$ be a closed extension of $\Delta$ in $\cH^{s,\gamma}_p(\B)$ with domain 
$$\scrD(\underline\Delta) = \cH^{s+2,\gamma+2}_p(\B) \oplus \bigoplus_{q\in I_\gamma} \underline \scrE_q .
$$
with $I_\gamma$ defined in \eqref{3.6.1}. Denote by $\widehat {\underline\scrE}_q$  the associated spaces in the domain of $\widehat \Delta$ under the isomorphism $\Theta$ explained after Theorem \ref{T3.4}.  
Assume that, for $q\in I_\gamma\cap \{q_j^\pm: j\ge 0\}$, 
\begin{enumerate}\renewcommand{\labelenumi}{\rm(\roman{enumi})}
\item $\widehat{\underline{\scrE}}_q^\bot= \widehat{\underline\scrE}_{n-1-q} $ for $q\in I_\gamma\cap I_{-\gamma}$
\item $\widehat{\underline{\scrE}}_q  = \widehat{\scrE}_q$,  for $\gamma\ge0$ and $q\in I_\gamma\setminus I_{-\gamma}$
\item $\widehat{\underline{\scrE}}_q =\{0\}$ for $\gamma \le0$ and $q\in I_\gamma\setminus I_{-\gamma}$.
\end{enumerate} 
Then there exists a $c>0$ such that $-\underline\Delta+c$ has a bounded $H_\infty$-calculus  in $\cH^{s,\gamma}_p(\B)$ of any angle $\theta >0$.  
\end{theorem}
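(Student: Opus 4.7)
The strategy follows \cite{SS1,SS2}: construct a parametrix for $\lambda+c-\underline\Delta$ uniformly for $\lambda$ in a sector $\Lambda_\theta$, $\theta>0$ arbitrary, within the parameter-dependent cone pseudodifferential calculus, and then derive the $H_\infty$-estimates from the resulting resolvent bounds. I would begin by transferring the problem to the model cone via the isomorphism $\Theta$ of Theorem~\ref{T3.4}: the domain $\scrD(\underline\Delta)=\cH^{s+2,\gamma+2}_p(\B)\oplus\bigoplus_q \underline\scrE_q$ is identified with $\cK^{s+2,\gamma+2}_p(\R_+\times\partial\B)\oplus\bigoplus_q \widehat{\underline\scrE}_q$ as a domain for $\widehat\Delta$, reducing the analysis near $\partial\B$ to the scale-invariant setting on $\R_+\times\partial\B$.

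The central step is to show that $-\widehat{\underline\Delta}$ has a sector of minimal growth of any angle $\theta>0$. Exploiting the dilation homogeneity $x\mapsto rx$ of the model cone, it suffices to invert $\lambda-\widehat{\underline\Delta}$ for $|\lambda|=1$ on $\partial\Lambda_\theta$. Passing to the Mellin transform in $x$ reduces the question to a meromorphic family of cross-sectional operators built out of $\sigma_M(\Delta)(z)=z^2-(n-1)z+\Delta_{h(0)}$, with boundary data selected by the $\widehat{\underline\scrE}_q$. Conditions (i)--(iii) are precisely the algebraic conditions that single out a closed extension for which this family is uniformly invertible along the weight line $\Gamma_{(n+1)/2-\gamma}$: condition (i) is a Lagrangian/self-adjointness condition under the duality $q\leftrightarrow n-1-q$, reflecting the involution $z\mapsto n-1-z$ of $\sigma_M(\Delta)$, while (ii) and (iii) prescribe the minimal or maximal closure on the asymptotics outside the self-dual sub-interval $I_\gamma\cap I_{-\gamma}$. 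In the reference case $p=2$, $\gamma=0$ (after conjugation by a suitable weight factor), they make $\widehat{\underline\Delta}$ non-positive self-adjoint, so the resolvent set contains $\C\setminus[0,\infty)$; the corresponding bound on general $\cK^{s,\gamma}_p$-spaces is obtained by interpolation and duality within the cone-Sobolev scale.

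Next, I would assemble a parametrix $P(\lambda)$ for $\lambda+c-\underline\Delta$ by patching: away from $\partial\B$ the standard parameter-dependent pseudodifferential parametrix of $\lambda+c-\Delta$, and near $\partial\B$ the operator $(\lambda-\widehat{\underline\Delta})^{-1}$ transported back via $\Theta$ and cut off by a boundary-collar function. The remainder $R(\lambda)=(\lambda+c-\underline\Delta)P(\lambda)-I$ lies in a Green-type ideal of the cone calculus whose operator norm on $\cH^{s,\gamma}_p(\B)$ decays in $|\lambda|+c$ uniformly on $\Lambda_\theta$. For $c$ sufficiently large a Neumann series then yields the true resolvent $(\lambda+c-\underline\Delta)^{-1}$ together with the sectorial estimate \eqref{sectorial}, and by Remark~\ref{R4.3} we may work with $-\underline\Delta+c$ throughout.

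Finally, for a bounded holomorphic $f$ on $\C\setminus\Lambda_\theta$ the $H_\infty$-bound is obtained from
\[
f(-\underline\Delta+c)=\frac{1}{2\pi i}\int_{\partial\Lambda_\theta} f(\lambda)\bigl(\lambda-(-\underline\Delta+c)\bigr)^{-1}\,d\lambda,
\]
using that the parametrix $P(\lambda)$, and hence the full resolvent up to rapidly decaying remainders, carries symbol estimates uniform in $\lambda$; the resulting Dunford integral converges in $\scrL(\cH^{s,\gamma}_p(\B))$ with norm bounded by $C\|f\|_\infty$. The main obstacle is the resolvent analysis of the model cone: verifying that conditions (i)--(iii) really yield a sector of minimal growth of arbitrary opening and controlling the Mellin inverse along $\Gamma_{(n+1)/2-\gamma}$ uniformly as $|\lambda|\to\infty$. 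Once this is secured, the parametrix construction, the Neumann series, and the passage to the bounded $H_\infty$-calculus are essentially routine within the cone pseudodifferential calculus.
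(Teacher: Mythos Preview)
Your plan is essentially the strategy of \cite{SS2}, which is precisely what the paper invokes: its entire proof reads ``This follows by combining Theorems 6.5 and 5.2 in \cite{SS2}.'' Theorem~5.2 there verifies that conditions (i)--(iii) guarantee a sector of minimal growth of arbitrary opening for the model cone extension $\widehat{\underline\Delta}$, and Theorem~6.5 turns this, via the parameter-dependent cone calculus and a parametrix patching argument, into the bounded $H_\infty$-calculus for $c-\underline\Delta$. So you have correctly identified the architecture the paper relies on.

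One point deserves a caveat. Your heuristic that in the reference case $p=2$, $\gamma=0$ the conditions (i)--(iii) render $\widehat{\underline\Delta}$ non-positive self-adjoint, with the general case then following by interpolation and duality, is not quite how \cite{SS2} proceeds and would not by itself give a sector of \emph{arbitrary} opening for the full range of $s,\gamma,p$. The actual argument (Theorem~5.2 of \cite{SS2}) analyzes the resolvent of $\widehat{\underline\Delta}$ directly on the $\cK^{s,\gamma}_p$-scale by exploiting dilation covariance to reduce to $|\lambda|=1$ and then establishing invertibility together with the required mapping properties; conditions (i)--(iii) enter as the compatibility constraints that make this inversion go through, not merely as a self-adjointness criterion. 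If you intend to fill in the details rather than cite, this is where the real work lies, as you yourself note at the end.
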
 

\begin{proof}
This follows by combining Theorems 6.5 and 5.2 in \cite{SS2}.
\end{proof} 

\begin{example} \label{3.9} Suppose $\Delta$ is elliptic with respect to the line $\Gamma_{(n+1)/2-\gamma-2}$. We know that the maximal domains of $\Delta$ and $\widehat\Delta$ are of the form  \begin{eqnarray*}
\scrD( \Delta_{\max}) = \cH^{s+2,\gamma+2}_p(\B) \oplus {\scrE} \quad\text{and} \quad
\scrD( {\widehat\Delta}_{\max}) = \cK^{s+2,\gamma+2}_p(\R_+\times \partial \B) \oplus {\widehat\scrE}  
\end{eqnarray*}
for finite rank spaces $\scrE$ and $\widehat \scrE$. 
According to Corollary \ref{3.5} we obtain closed extensions $\underline \Delta$ and $\underline {\widehat\Delta}$ of 
$\Delta$ and $\widehat \Delta$, respectively, by letting, for any subspaces $\underline \scrE$ of $\scrE$ and  $\underline{\widehat\scrE}$ of $\widehat \scrE$,
\begin{eqnarray}\label{4.4}
\scrD(\underline \Delta) = \cH^{s+2,\gamma+2}_p(\B) \oplus \underline{\scrE} \text{ and } 
\label{4.5}
\scrD(\underline {\widehat\Delta}) \!&\!\!=& \cK^{s+2,\gamma+2}_p(\R_+\times \partial \B) \oplus \underline{\widehat\scrE}. 
\end{eqnarray}

In Theorem \ref{3.7}, with $\gamma$ fixed according to \eqref{gamma1},  we may choose 
\begin{eqnarray}\label{4.6}
\underline\scrE = \bigoplus_{j=1}^k \scrE_{q^-_j} \oplus \underline{\scrE}_0, \text{with }\;
\underline \scrE_0 = \{u:u(x,y) = \omega(x) e(y); e\in E_0\}
\end{eqnarray}
and $E_0$ defined in Example \ref{2.18}. The image of $\underline{\scrE}$ under the isomorphism $\Theta$ in \eqref{theta} is 
\begin{eqnarray} 
\label{4.7} 
\underline{\widehat\scrE} = \bigoplus_{j=1}^k \widehat\scrE_{q^-_j} \oplus \underline{\widehat{\scrE}}_0
\end{eqnarray}
with $\underline{\widehat\scrE}_0= \{u\in C^\infty(\R_+\times \partial \B): u(x,y) = \omega(x)e(y); e\in E_0\}$.
We then obtain:
\end{example} 
\begin{theorem}\label{T4.6}
Let $s\in \R$, $1<p<\infty$ and assume $\gamma$ satisfies \eqref{gamma1}.
With  $\scrE$ as  in  \eqref{4.6}, 
the extension $\underline\Delta$ satisfies the assumptions of Theorem \ref{HDelta}, and hence there exists a $c>0$ such that $-\Delta+c$ has a bounded $H_\infty$-calculus on $\cH^{s,\gamma}_p(\B)$ for any angle $\theta>0$.
\end{theorem}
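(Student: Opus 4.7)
My plan is to verify that the extension $\underline{\Delta}$ of $\Delta$ defined by the asymptotics space $\underline{\scrE}$ of \eqref{4.6} satisfies all the hypotheses of Theorem \ref{HDelta}, whereupon the bounded $H_\infty$-calculus is immediate. Two items need checking: the exclusion condition \eqref{4.3} on $\gamma$, and the three matching relations (i), (ii), (iii) on the subspaces $\widehat{\underline{\scrE}}_q$ for the poles $q = q_j^\pm \in I_\gamma$.

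The condition \eqref{4.3} is a direct consequence of \eqref{gamma1}. The lower endpoint $(n+1)/2 - \gamma - 2$ lies strictly below $q_k^-$, hence below every $q_j^-$, and it is strictly negative so it lies below every $q_j^+ \geq 0$; the single borderline case $n=1$, $q_0^+ = 0$ is excluded by the strict inequality $\gamma > (n-3)/2 = -1$ that \eqref{gamma1} forces. The upper endpoint $(n+1)/2 - \gamma$ is strictly positive, hence strictly above every $q_j^- \leq 0$, and the auxiliary clause of \eqref{gamma1} rules out equality with any $q_j^+$.

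The heart of the argument is the reflection symmetry $q_j^+ + q_j^- = n-1$ read off from \eqref{qj}: the involution $q \mapsto n-1-q$ swaps $q_j^+ \leftrightarrow q_j^-$ and, by direct arithmetic, carries $I_\gamma$ bijectively onto $I_{-\gamma}$. Writing $I_\gamma = ((n-3)/2 - \gamma, (n+1)/2 - \gamma)$ and comparing with $I_{-\gamma} = ((n-3)/2 + \gamma, (n+1)/2 + \gamma)$, I obtain for $\gamma \geq 0$ that $I_\gamma \setminus I_{-\gamma}$ lies in $(-\infty, (n-3)/2 + \gamma]$; since every $q_j^+ \geq (n-1)/2$, while $(n-1)/2 > (n-3)/2 + \gamma$ when $\gamma < 1$, and no $q_j^+$ can fit inside $I_\gamma$ at all once $\gamma \geq 1$, the set $I_\gamma \setminus I_{-\gamma}$ contains only points of type $q_j^-$. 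Dually, for $\gamma \leq 0$ the set $I_\gamma \setminus I_{-\gamma}$ lies in $[(n+1)/2 + \gamma, \infty)$ and, using $q_j^- \leq 0$ together with $\gamma > (n-3)/2$, contains only points of type $q_j^+$.

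With this dichotomy in hand, (ii) and (iii) become bookkeeping: for $\gamma \geq 0$ the only poles in play for (ii) are the $q_j^-$ with $j = 1, \dots, k$ (or $j=0$ when $n > 1$), on which we took the full space by \eqref{4.6}; the only potentially problematic case $j = 0$, $n = 1$ never occurs, since $0 \in I_\gamma \cap I_{-\gamma}$ for every $\gamma \in (-1,1)$. For $\gamma \leq 0$, (iii) follows immediately from $\widehat{\underline{\scrE}}_{q_j^+} = \{0\}$. For (i), the reflection pairs each $q_j^-$ in $I_\gamma \cap I_{-\gamma}$ with the corresponding $q_j^+$: our choice assigns the full subspace of $E_j$ to $q_j^-$ and the zero subspace to $q_j^+$, so the two orthogonal complements match under the common parameterization by $E_j$. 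I expect the one genuinely delicate point to be the self-reflected double pole $n=1$, $q_0^\pm = 0$: here $\widehat{\underline{\scrE}}_0 = \{\omega(x) e_0(y) : e_0 \in E_0\}$ is neither zero nor the full $\widehat{\scrE}_0$ (it omits the $\ln x$ branch), and (i) holds only because the special definition preceding Theorem \ref{HDelta} stipulates $\widehat{\underline{\scrE}}_0^\perp = \widehat{\underline{\scrE}}_0$ precisely for this intermediate choice. With all hypotheses of Theorem \ref{HDelta} verified, the conclusion follows.
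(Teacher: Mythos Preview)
Your verification of \eqref{4.3} and of conditions (i)--(iii) in Theorem~\ref{HDelta} is correct and follows the same route as the paper: the reflection $q\mapsto n-1-q$ exchanging $q_j^+\leftrightarrow q_j^-$ and $I_\gamma\leftrightarrow I_{-\gamma}$, the observation that the chosen domain takes the full space over each $q_j^-$ and the zero space over each $q_j^+$, and the special handling of the double pole $q_0^\pm=0$ for $n=1$ via the clause $\widehat{\underline\scrE}_0^\perp=\widehat{\underline\scrE}_0$. Your write-up is in fact more explicit than the paper's on several points, e.g.\ on why $I_\gamma\setminus I_{-\gamma}$ meets only $q_j^-$'s for $\gamma\ge0$. (One cosmetic slip: ``hence below every $q_j^-$'' should read ``hence not equal to any $q_j^-$'', since $(n+1)/2-\gamma-2>-2\ge q_{k+1}^-$; your conclusion is unaffected.)

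There is, however, a genuine gap. Theorem~\ref{HDelta} is stated only for $s\ge0$, whereas Theorem~\ref{T4.6} asserts the result for all $s\in\R$. Your plan ``verify the hypotheses of Theorem~\ref{HDelta}, whereupon the bounded $H_\infty$-calculus is immediate'' therefore covers only the case $s\ge0$. The paper handles $s<0$ by passing to the adjoints $\underline\Delta^*$ and $\underline{\widehat\Delta}^*$, whose domains (determined in \cite[Theorem~5.3]{SS1}) live in spaces with nonnegative regularity index, and then rechecking conditions (i)--(iii) for the adjoint extension. You need to supply this duality step to close the argument for negative $s$.
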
 

\begin{proof} First let $s\ge0$.
Using that $n\ge 1$ and $-2<(n+1)/2-\gamma-2<0$, it is easily seen that $|\gamma|< (n+1)/2$.
Moreover, the choice of $\gamma$ implies that \eqref{4.3} holds. 
It remains to check conditions (i), (ii) and (iii) in Theorem \ref{HDelta}. 

To this end we first note that, for $q_0^-\not=q_0^+$, the domain of $\widehat{\underline {\Delta}}$ comprises  the full asymptotics spaces $\widehat{\scrE}_{q_j^-}$ over the $q_j^-$ and the zero spaces over the $q_j^+$, while, in case $q_0^+=q_0^-=0$, the choice implies that $\widehat{\underline \scrE}_0 = \widehat{\underline \scrE}_0^\perp$. Since $q_j^-= n-1-q_j^+$,  $I_\gamma\cap I_{-\gamma}$ contains either both $q_j^+$ and $q_j^-$ or neither of them. Hence (i) is fulfilled. 
Since, for $\gamma>0$, the interval $I_\gamma$ lies to the left of $I_{-\gamma}$ on the real axis, while, for $\gamma<0$, it lies to the right, a similar consideration  shows that (ii) and (iii) hold. 

For negative $s$ consider the adjoints $\underline{\Delta}^*$ and $\underline{\widehat\Delta}^*$. The domain of $\widehat{\underline\Delta}^*$ has  been determined in \cite[Theorem 5.3]{SS1} and one can check the conditions (i), (ii) and (iii) as before. 
\end{proof}

\begin{example}\label{3.10} 
In Theorem \ref{T4.6} we have chosen the base space $\cH^{s,\gamma}_p(\B)$ in such a way that the  maximal extensions of $\Delta$ and $\widehat \Delta $ contain as many asymptotics spaces $\scrE_{q_j^-}$ as possible. On the one hand, this gives us very precise information on the possible asymptotics a solution can develop. On the other hand, it considerably restricts the interpolation space $(\cH^{s,\gamma}_p(\B), \scrD(\underline \Delta))_{1-1/q,q}$ of admissible initial values in the Cl\'ement-Li Theorem; see Section \ref{sect.interpolation}, below, for more details on this space.   
Depending on the situation, the following choices may be of interest: 

Fix $\gamma$ such that $(n+1)/2-\gamma-2, (n+1)/2-\gamma\not=q_j^\pm$  for all $j$ and
\begin{eqnarray}\label{4.8}
\max \{-2, q^-_{\ell+1}\}  <\frac{n+1}2-\gamma-2< q_\ell^- \le 0 
\end{eqnarray}
for some $\ell \ge0$.
Consider the extension $\underline \Delta$
with domain 
\begin{eqnarray}\label{4.9} 
\scrD(\underline\Delta) = \cH^{s+2,\gamma+2}_p(\B) \oplus \bigoplus_{j=1}^\ell \scrE_{q_j^-}\oplus \underline{\scrE}_0
\end{eqnarray}
with $\underline\scrE_0$ as in \eqref{4.6}. 
Here, $\ell$ can be chosen to be zero; then apart from the constants no further asymptotics terms arise. 
In particular, for $q_1^-\le -2$,  this is necessarily the case. 

Under the isomorphism $\Theta$, the spaces  $\scrE_{q_j^-}$, $j\ge1$, are mapped to  
${{\widehat\scrE}}_{q_j^-}$ and  $\underline \scrE_0$ to $\underline {\widehat\scrE}_0$. 
The same considerations as in the proof of Theorem \ref{T4.6} then show the following theorem. \end{example}   

\begin{theorem} \label{T4.8}
The extension $\underline\Delta$ with domain \eqref{4.9} also satisfies the assumptions of Theorem \ref{HDelta}. Hence there exists a $c>0$ such that $-\underline\Delta+c$ has a bounded $H_\infty$-calculus on $\cH^{s,\gamma}_p(\B)$ for any $s\in \R$ and  any angle $\theta>0$. In particular, $-\Delta$ has maximal regularity by  Theorem \ref{T4.2} and Remark \ref{R4.3}. 
\end{theorem}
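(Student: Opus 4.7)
The plan is to verify the three conditions (i)--(iii) of Theorem \ref{HDelta} for $\underline\Delta$ with domain \eqref{4.9}, and then to conclude maximal $L^q$-regularity by invoking Theorem \ref{HDelta}, Theorem \ref{T4.2}, and Remark \ref{R4.3} in succession. The verification will follow exactly the template of the proof of Theorem \ref{T4.6}: only the collection of poles of $\sigma_M(\Delta)^{-1}$ lying inside $I_\gamma$ has changed (as a consequence of using \eqref{4.8} in place of \eqref{gamma1}), while the mechanism by which each pole contributes to (i)--(iii) is unchanged.

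First, I would note that \eqref{4.3} is built into the hypothesis that $(n+1)/2-\gamma-2,\ (n+1)/2-\gamma \notin \{q_j^\pm\}$. Next, I would identify which poles fall in $I_\gamma$: by \eqref{4.8} these are precisely $q_0^-=0, q_1^-, \ldots, q_\ell^-$, together with any $q_j^+$ satisfying $q_j^+ < (n+1)/2-\gamma$. The central symmetry to exploit throughout is $q_j^-+q_j^+ = n-1$, so reflection about $(n-1)/2$ sends $I_\gamma$ to $I_{-\gamma}$ and each $q_j^-$ to the corresponding $q_j^+$.

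For condition (i), let $q \in I_\gamma \cap I_{-\gamma}$, which forces both $q$ and $n-1-q$ to lie in $I_\gamma$. By construction of \eqref{4.9}, the domain contains the full $\widehat\scrE_{q_j^-}$ for $j=1,\ldots,\ell$ and, when $n>1$, the full $\underline\scrE_0 = \widehat\scrE_0$, so the orthogonal complement on the $q_j^-$ side is $\{0\}$, matching $\widehat{\underline\scrE}_{q_j^+} = \{0\}$. When $n=1$, where $q_0^- = q_0^+ = 0$ merges into a double pole, the choice $\underline\scrE_0 = \{\omega(x) e(y): e\in E_0\}$ triggers precisely the branch of the definition preceding Theorem \ref{HDelta} that enforces $\widehat{\underline\scrE}_0^\perp = \widehat{\underline\scrE}_0$. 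For conditions (ii) and (iii), I would transfer the observation from the proof of Theorem \ref{T4.6}: when $\gamma>0$, the interval $I_\gamma$ lies strictly to the left of $I_{-\gamma}$, so the poles in $I_\gamma \setminus I_{-\gamma}$ are the leftmost ones in $I_\gamma$, which are necessarily among the $q_j^-$, and the full asymptotics are included, establishing (ii). When $\gamma<0$ the situation is reversed: any pole in $I_\gamma \setminus I_{-\gamma}$ must be some $q_j^+$, and the domain \eqref{4.9} carries $\{0\}$ over each such exponent, establishing (iii). The remaining case $s<0$ reduces to $s\ge 0$ by passing to the adjoints $\underline\Delta^*$ and $\widehat{\underline\Delta}^*$, whose domains are described in \cite[Theorem 5.3]{SS1}, and to which the same case analysis applies.

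I expect the main subtlety to be the bookkeeping in (i) for $n=1$, where the double pole at $0$ requires the carefully tailored choice of $\underline\scrE_0$ to make $\widehat{\underline\scrE}_0$ self-orthogonal; one has to be certain this choice aligns with the correct branch of the definition preceding Theorem \ref{HDelta}. Once (i)--(iii) are verified, Theorem \ref{HDelta} yields a bounded $H_\infty$-calculus of arbitrary angle $\theta>0$ for $-\underline\Delta + c$, and Theorem \ref{T4.2} together with Remark \ref{R4.3} upgrades this to maximal $L^q$-regularity of $-\underline\Delta$ itself.
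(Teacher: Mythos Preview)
Your proposal is correct and follows essentially the same approach as the paper. The paper itself gives no separate proof of Theorem~\ref{T4.8}; it simply remarks (at the end of Example~\ref{3.10}) that ``the same considerations as in the proof of Theorem~\ref{T4.6} then show the following theorem,'' and you have carried out precisely those considerations, adapted from the case $\ell=k$ of \eqref{gamma1} to the general $\ell$ of \eqref{4.8}, at the same level of detail as the proof of Theorem~\ref{T4.6}.
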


\subsection{Interpolation} \label{sect.interpolation}
Consider the extensions $\underline \Delta$ of $\Delta$ in $\cH^{s,\gamma}_p(\B)$ in Theorems \ref{T4.6} and \ref{T4.8}.  In the Cl\'ement-Li Theorem, the real interpolation spaces $(\X_0,\X_1)_{1-1/q,q}$, $1<q<\infty$,  play an important role. In the case at hand we have to consider
$$(\cH^{s,\gamma}_p(\B), \scrD(\underline\Delta))_{1-1/q,q}=\Big(\cH^{s,\gamma}_p(\B), \cH^{s+2,\gamma+2}_p(\B)\oplus \bigoplus_{j=1}^\ell \scrE_{q_j^-} \oplus \underline \scrE_0\Big)_{1-1/q,q}.$$
For these, we have the following result. 
\begin{lemma}\label{interpolation}Let $s\in \R$, $1<p,q<\infty$ and $\gamma$ be as in \eqref{4.8}. 
For every $\varepsilon>0$ we have continuous and dense embeddings
\begin{eqnarray}\label{embedding2}
\cH^{s+2-2/q+\varepsilon, \gamma+2-2/q+\varepsilon}_p(\B) &+&\bigoplus_{j=1}^\ell\scrE_{q_j^-}+\underline{\scrE}_0 \hookrightarrow 
(\cH^{s,\gamma}_p(\B), \scrD(\underline\Delta))_{1-1/q,q}\nonumber\\
&\hookrightarrow&
\cH^{s+2-2/q-\varepsilon, \gamma+2-2/q-\varepsilon}_p(\B)+ \bigoplus_{j=1}^\ell \scrE_{q_j^-}\oplus \underline\scrE_0.
\end{eqnarray}

{\rm(i)} The sum on the right hand side is direct when $\frac{n+1}2-\gamma-2+\frac2q +\varepsilon \le q_\ell^-$, which can be achieved in view of \eqref{4.8} by taking $q$ large and $\varepsilon$ small. 

{\rm(ii)} For general $q$ we find an index $0\le r\le \ell$ such that 
$$\max\{-2,q_{r+1}^-\} <\frac{n+1}2-\gamma-2+\frac2q+\varepsilon< q_r^-$$ for all sufficiently small $\varepsilon>0$. Then the right hand side is 
$$\cH^{s+2-2/q-\varepsilon, \gamma+2-2/q-\varepsilon}_p(\B)\oplus \bigoplus_{j=1}^r \scrE_{q_j^-}\oplus\underline\scrE_0.$$
\end{lemma}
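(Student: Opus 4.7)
My plan is to reduce the interpolation problem to the standard real interpolation of cone Sobolev spaces, controlled by Theorem \ref{2.10}(c), and then to add a finite-dimensional correction coming from the asymptotics space $\scrF := \bigoplus_{j=1}^\ell \scrE_{q_j^-}\oplus \underline{\scrE}_0$. The directness statements (i) and (ii) will follow from Lemma \ref{2.8} applied to the thresholds read off from \eqref{4.8}.

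For the lower embedding in \eqref{embedding2}, I would argue in two independent pieces. First, $\scrF\hookrightarrow(\cH^{s,\gamma}_p(\B),\scrD(\underline\Delta))_{1-1/q,q}$ because $\scrF\subset\scrD(\underline\Delta)$ and the right-endpoint space always embeds continuously into any real interpolation space with $0<\theta<1$. Second, $\cH^{s+2-2/q+\varepsilon,\gamma+2-2/q+\varepsilon}_p(\B)\hookrightarrow(\cH^{s,\gamma}_p(\B),\cH^{s+2,\gamma+2}_p(\B))_{1-1/q,q}\hookrightarrow (\cH^{s,\gamma}_p(\B),\scrD(\underline\Delta))_{1-1/q,q}$, where the first inclusion follows from Theorem \ref{2.10}(c) combined with the standard sandwich of real by complex interpolation (via reiteration and monotonicity in the interpolation parameter), and the second from the monotonicity of the interpolation functor in its second argument, applied to $\cH^{s+2,\gamma+2}_p(\B)\hookrightarrow\scrD(\underline\Delta)$. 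Summing these two observations gives the left embedding.

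For the upper embedding I would use that, by Theorem \ref{T4.8}, $-\underline\Delta+c$ admits a bounded $H_\infty$-calculus on $\cH^{s,\gamma}_p(\B)$, so that the interpolation space $(\cH^{s,\gamma}_p(\B),\scrD(\underline\Delta))_{1-1/q,q}$ coincides with the trace space of the analytic semigroup generated by $-\underline\Delta-c$. Given $u$ in this trace space, the orbit $e^{-t(-\underline\Delta+c)}u$ lies in $\scrD(\underline\Delta)=\cH^{s+2,\gamma+2}_p(\B)\oplus\scrF$ for every $t>0$, and the continuous projection $\Pi\colon\scrD(\underline\Delta)\to\scrF$ along $\cH^{s+2,\gamma+2}_p(\B)$ produces a curve $f(t)=\Pi e^{-tA}u\in\scrF$ whose limit $f_0$ as $t\to 0^+$ can be extracted from the semigroup estimates. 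The difference $u-f_0$ then has no asymptotic component and lies in $(\cH^{s,\gamma}_p(\B),\cH^{s+2,\gamma+2}_p(\B))_{1-1/q,q}$ by the same trace argument applied to the reduced pair; by Theorem \ref{2.10}(c) it is therefore contained in $\cH^{s+2-2/q-\varepsilon,\gamma+2-2/q-\varepsilon}_p(\B)$, and the upper embedding follows.

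With the two-sided embedding in hand, (i) and (ii) are a direct application of Lemma \ref{2.8}: a representative $\omega(x)x^{-q_j^-}e_j(y)$ of $\scrE_{q_j^-}$ belongs to $\cH^{\sigma,\delta}_p(\B)$ for large $\sigma$ if and only if $q_j^-<(n+1)/2-\delta$. Taking $\delta=\gamma+2-2/q-\varepsilon$, the hypothesis in (i) is exactly $(n+1)/2-\gamma-2+2/q+\varepsilon\le q_\ell^-\le q_j^-$ for every $1\le j\le\ell$, so none of the asymptotic terms is absorbed into the lower Sobolev component and the sum on the right of \eqref{embedding2} is direct. For (ii), the index $r$ separates those $q_j^-$ that lie above the threshold (and hence must be kept as genuine asymptotics) from those lying below (which are swallowed by the Sobolev component), producing the displayed decomposition; the component $\underline{\scrE}_0$ is handled analogously using its explicit form in Theorem \ref{3.7}. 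The main obstacle is ensuring that the finite-dimensional ``trace in $\scrF$'' is well-defined and continuous in $u$: a priori one only has $t\|f(t)\|_\scrF\le cK(t,u)\to 0$, which gives boundedness but not convergence, so extracting $f_0$ rigorously requires exploiting the fine spectral structure of $\underline\Delta$ on the asymptotic subspace rather than only its bounded $H_\infty$-calculus.
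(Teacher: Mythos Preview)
Your argument for the left embedding and for parts (i) and (ii) is essentially the same as the paper's and is correct.

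For the right embedding, however, you take a genuinely different and harder route, and you correctly identify a gap in it that you do not close. Extracting the limit $f_0=\lim_{t\to0^+}\Pi e^{-tA}u$ is not justified by the semigroup estimates you quote; and even granting its existence, it is not clear why $u-f_0$ should lie in the interpolation space of the \emph{reduced} pair $(\cH^{s,\gamma}_p(\B),\cH^{s+2,\gamma+2}_p(\B))$, since $A$ does not in general commute with the projection $\Pi$, so $e^{-tA}(u-f_0)$ need not have vanishing $\scrF$-component.

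The paper avoids all of this by a much shorter argument that you are missing: simply apply $\Delta$ to the interpolation space. Since $\Delta\colon\scrD(\underline\Delta)\to\cH^{s,\gamma}_p(\B)$ and $\Delta\colon\cH^{s,\gamma}_p(\B)\to\cH^{s-2,\gamma-2}_p(\B)$ are bounded, interpolation gives
\[
\Delta\big((\cH^{s,\gamma}_p(\B),\scrD(\underline\Delta))_{1-1/q,q}\big)\hookrightarrow(\cH^{s-2,\gamma-2}_p(\B),\cH^{s,\gamma}_p(\B))_{1-1/q,q}\hookrightarrow\cH^{s-2/q-\varepsilon,\gamma-2/q-\varepsilon}_p(\B),
\]
the last step by Theorem~\ref{2.10}(c). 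Hence every $u$ in the interpolation space lies in the \emph{maximal} domain of $\Delta$ viewed as an unbounded operator in $\cH^{s-2/q-\varepsilon,\gamma-2/q-\varepsilon}_p(\B)$, and Theorem~\ref{T3.4} immediately produces the decomposition into $\cH^{s+2-2/q-\varepsilon,\gamma+2-2/q-\varepsilon}_p(\B)$ plus the asymptotics spaces. No semigroup, no limit extraction, no fine spectral structure is needed.
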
 

A complete {\em proof} can be found in \cite{RS5}. Here is a sketch: The left embedding is a consequence of Theorem \ref{2.10} together with the embedding results for complex and real interpolation and the fact that the asymptotics spaces embed into $\scrD(\underline\Delta) $. For the right embedding we also use Theorem \ref{2.10}. Moreover, we observe that $\Delta(\scrD(\underline\Delta))\subseteq \cH^{s,\gamma}_p(\B))$. Together with Theorem \ref{2.10} this implies that, for every $\varepsilon>0$, 
\begin{eqnarray*}
\Delta((\cH^{s,\gamma}_p(\B), \scrD(\underline\Delta))_{1-1/q,q}) &\hookrightarrow &
(\cH^{s-2,\gamma-2}_p(\B), \cH^{s,\gamma}_p(\B)))_{1-1/q,q}\\
&\hookrightarrow& \cH^{s-2/q-\varepsilon,\gamma-2/q-\varepsilon}_p(\B).
\end{eqnarray*}
In particular, the interpolation space embeds into the maximal domain of $\Delta$ as an unbounded operator in $\cH^{s-2/q-\varepsilon,\gamma-2/q-\varepsilon}_p(\B)$, 
which gives the right hand side. 

Statement (i) is an immediate consequence of the fact that a non-zero function in $\scrE_{q_j^-}$
belongs to $\cH^{s',\gamma'}_{p}(\B)$ if and only if $q_j^-<(n+1)/2-\gamma'$. 
(ii) is clear. \hfill $\Box$

\section{Example: The Porous Medium Equation} 
The porous medium equation (PME) models the flow of a gas in a porous medium. Denoting by $u$ the density of the gas, it takes the form 
\begin{eqnarray}
\label{PME}
\partial_t u -\Delta (u^m) &=& 0\\
\label{IV}
u|_{t=0}&=& u_0.
\end{eqnarray}
Here $t$ denotes time and  $m$ is a positive constant.  It is a generalization of the heat equation which is obtained for $m=1$. Instead of setting the right hand side of \eqref{PME} to zero, one may include a nonlinear forcing term $F=F(t,u)$. Vazquez' monograph  \cite{V07} contains a wealth of information; see \cite{AP81} by Aronson and Peletier for an instructive  introduction. 

\subsection{Short Time Solutions}

We consider the PME on $\B$ with the Laplace-Beltrami operator for the conically degenerate metric $g$. 
The existence of short time solutions has been established in  \cite[Theorem 6.5]{RS1}:
%

\begin{theorem}\label{T5.1}
Choose $\gamma$ such that 
\begin{eqnarray}\label{eq.5.1}
\max\{-2, q_1^-\}<(n+1)/2-\gamma-2 <0 
\end{eqnarray}
and fix $1<p,q<\infty$ so large that 
\begin{eqnarray}\label{5.3}
\frac{n+1}p +\frac2q<1\quad\text{and}\quad \frac{n+1}2-\gamma-2+\frac4q<0.
\end{eqnarray}

For $s>-1+(n+1)/p + 2/q$ consider the extension $\underline \Delta $ of $\Delta$  with the domain 
\begin{eqnarray}\label{Dom}
\scrD(\underline \Delta) =  \cH^{s+2,\gamma+2}_p(\B) \oplus \underline\scrE_0,
\end{eqnarray}
where  
$ \underline \scrE_0$ is as in \eqref{4.6}. 
Given any strictly positive 
$$u_0\in (\cH^{s,\gamma}_p(\B), \scrD(\underline\Delta))_{1-1/q,q},$$
the initial value  problem for the porous medium equation in
$L^q(0,T_0;\cH^{s,\gamma}_p(\B))$ 
\begin{eqnarray}
\partial_t u -\Delta(u^m) = F(t,u), \quad u|_{t=0}=u_0,
\end{eqnarray}
with forcing term $F=F(t,\lambda)\in C^{1-,1-}([0,T_0]\times U_0;  \cH^{s,\gamma}_p(\B))$ for a neighborhood $U$ of $u_0$ in $(\cH^{s,\gamma}_p(\B), \scrD(\underline\Delta))_{1-1/q,q}$,
has a unique solution 
\begin{eqnarray*}
u\in W^1_q(0,T; \cH^{s,\gamma}_p(\B)) \cap L^q(0,T; \scrD(\underline \Delta))
\end{eqnarray*}
for suitable  $T\le T_0$. 
\end{theorem}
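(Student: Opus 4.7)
The plan is to apply the Cl\'ement--Li theorem (Theorem \ref{ThmCL}) with $\X_0=\cH^{s,\gamma}_p(\B)$ and $\X_1=\scrD(\underline\Delta)$, having cast the PME in quasilinear form
$$\partial_t u + A(u)u = \tilde F(t,u), \qquad A(u) = -m\,u^{m-1}\underline\Delta,$$
where any first-order chain-rule contribution $m(m-1)u^{m-2}|\nabla u|^2$ arising from $\Delta(u^m)$ is absorbed into a modified forcing $\tilde F$ together with $F$; because the interpolation space will be continuously embedded in an algebra of $C^1$-type functions, this term is Lipschitz in $u$ and harmless. The verification reduces to three points: (a) the interpolation space $\Y:=(\X_0,\X_1)_{1-1/q,q}$ is a Banach algebra of continuous functions in which strict positivity is an open condition; (b) $A(u_0):\X_1\to\X_0$ has maximal $L^q$-regularity; (c) $u\mapsto A(u)$ is locally Lipschitz from $\Y$ to $\scrL(\X_1,\X_0)$.

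For (a) I would invoke Lemma \ref{interpolation} to embed $\Y$ into $\cH^{s+2-2/q-\varepsilon,\gamma+2-2/q-\varepsilon}_p(\B)\oplus\underline\scrE_0$ for small $\varepsilon>0$. The first condition in \eqref{5.3}, combined with $s>-1+(n+1)/p+2/q$, places the smoothness index strictly above $(n+1)/p$, so Corollary \ref{2.6} gives continuity on $\Int\B$ with control up to the tip; the second condition in \eqref{5.3} guarantees $\gamma+2-2/q-\varepsilon\ge(n+1)/2$, so Corollary \ref{algebra} furnishes the algebra structure. The $\underline\scrE_0$-component contributes constants near $\partial\B$, so strict positivity of $u_0$ implies $u_0\ge\delta>0$ on all of $\B$, and a sufficiently small neighborhood $U$ of $u_0$ in $\Y$ still consists of strictly positive functions; expanding $u^{m-1}$ around its boundary constant shows $u^{m-1}\in\Y$ for $u\in U$.

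For (b), which I expect to be the main obstacle, I would proceed as follows. By Theorem \ref{T4.8}, $-\underline\Delta+c$ admits a bounded $H_\infty$-calculus on $\X_0$ of arbitrarily small angle, hence is $\cR$-sectorial of angle $<\pi/2$. Write
$$A(u_0)+c' = -m\,u_0^{m-1}\bigl(\underline\Delta - c'/(m u_0^{m-1})\bigr),$$
so that, after a bounded multiplicative perturbation by the strictly positive bounded function $mu_0^{m-1}$ and a zero-order perturbation, $A(u_0)+c'$ inherits $\cR$-sectoriality of angle $<\pi/2$ for $c'$ large (Kalton--Weis-type perturbation for $\cR$-bounded resolvents, combined with the fact that multiplication by a function bounded above and below is an isomorphism of $\X_0$). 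The subtlety is that this multiplicative perturbation must preserve $\X_1=\scrD(\underline\Delta)$, i.e.\ the sum of a cone-Sobolev part and the asymptotics space $\underline\scrE_0$ of constants near the tip. Since $u_0^{m-1}$ itself equals a constant plus a remainder in $\cH^{s+2,\gamma+2}_p(\B)$ near $\partial\B$ (using Lemma \ref{multiplier} on the remainder), multiplication by $u_0^{m-1}$ maps $\underline\scrE_0$ into $\underline\scrE_0\oplus\cH^{s+2,\gamma+2}_p(\B)\subset\X_1$, so the domain is preserved. Theorem \ref{T4.2} together with Remark \ref{R4.3} then yields maximal $L^q$-regularity of $A(u_0)$.

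For (c), the map $u\mapsto u^{m-1}$ is locally Lipschitz from $\Y$ to $\Y$ by the algebra property and the mean-value expansion $u^{m-1}-v^{m-1}=(u-v)\int_0^1(m-1)(\tau u+(1-\tau)v)^{m-2}d\tau$, combined with the uniform positive lower bound on $U$. Pointwise multiplication by an element of $\Y$ is bounded from $\X_1$ to $\X_0$ by Lemma \ref{multiplier} applied to the $\cH^{s+\cdot,\gamma+\cdot}_p$-parts and by direct inspection on the $\underline\scrE_0$-parts. This establishes (H1). The hypothesis on the forcing term gives (H2), and (H3) is vacuous. Cl\'ement--Li now delivers the unique solution $u\in W^1_q(0,T;\X_0)\cap L^q(0,T;\X_1)$ on some $(0,T)$ with $T\le T_0$.
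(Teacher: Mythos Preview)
Your overall architecture---rewrite the PME as $\partial_t u - m u^{m-1}\underline\Delta u = \tilde F(t,u)$, apply Cl\'ement--Li with $\X_0=\cH^{s,\gamma}_p(\B)$, $\X_1=\scrD(\underline\Delta)$, use Lemma~\ref{interpolation} to embed the interpolation space into the algebra $\cA=\cH^{s+2-2/q-\varepsilon,\gamma+2-2/q-\varepsilon}_p(\B)\oplus\underline\scrE_0$, and invoke Theorem~\ref{T4.8}---is exactly the paper's. Your treatment of (a) and (c) is correct in spirit, though for (c) and for the gradient term in $\tilde F$ the paper indicates that ``more delicate multiplier arguments'' are required; closedness under holomorphic functional calculus (Lemma~\ref{si}) is what makes $u\mapsto u^{m-1}$ work, not just the algebra property.

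The genuine gap is in (b). The passage from $\cR$-sectoriality of $c-\underline\Delta$ to that of $c'-m u_0^{m-1}\underline\Delta$ is \emph{not} a consequence of any off-the-shelf Kalton--Weis multiplicative perturbation. Your factorization $\lambda-MB=M(\lambda M^{-1}-B)$ replaces the scalar $\lambda$ by the operator $\lambda M^{-1}$, so the resolvent of $MB$ has no direct relation to that of $B$; bounded invertibility of $M$ on $\X_0$ is irrelevant here. Nor does a similarity argument apply: $MB$ is not conjugate to $B$, and in any case your claim that multiplication by $u_0^{m-1}$ preserves $\X_1$ fails, since $u_0$ lies only in the interpolation space and its non-constant part is in $\cH^{s+2-2/q-\varepsilon,\gamma+2-2/q-\varepsilon}_p$, not $\cH^{s+2,\gamma+2}_p$. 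The paper instead invokes a \emph{freezing-of-coefficients} argument (\cite[Theorem~6.1]{RS2}): cover $\B$ by small patches on which $u_0^{m-1}$ is close to a constant, so that locally $A(u_0)$ is a small \emph{additive} perturbation of a constant multiple of $-\underline\Delta$ (for which $\cR$-sectoriality follows from Theorem~\ref{T4.8}), then glue the local resolvent bounds. This localization is where the real work lies and cannot be bypassed by an abstract perturbation lemma.
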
 

The choice of $\gamma$ implies that the second condition in \eqref{5.3} can always be fulfilled. 
In \cite{RS2} a stronger condition on $F$ has been used. It turns out that the present one suffices. \smallskip 

%

{\bf Idea of the proof of Theorem \ref{T5.1}}.
We write 
\begin{eqnarray}\label{Deltaum}
\Delta(u^m) = mu^{m-1} \Delta u + m(m-1)u^{m-2} \langle\nabla u, \nabla u\rangle_g,
\end{eqnarray} 
\begin{eqnarray*}
\nabla u = \sum_{j,k} g^{jk}\frac{\partial u}{\partial z_j }\frac\partial{\partial z_k}, \quad z\in \Int\B,
\end{eqnarray*}
and note that, in coordinates $(x,y)$ near $\partial \B$,
\begin{eqnarray}\label{nabla}
\langle\nabla u, \nabla v\rangle_g= \frac1{x^2}\Big((x\partial_x u)(x\partial_xv) + \sum_{j,k=1}^nh^{jk}
\frac{\partial u}{\partial y_j} \frac{\partial v}{\partial y_k}\Big).
\end{eqnarray}
Here $(g^{jk})$ and $(h^{jk})$ are the inverses  of the metric tensors $g$ and $h$ in \eqref{eq.g}. 

Then the porous medium equation takes the form 
\begin{eqnarray*}
\partial_t u +A(u)u =f(t,u)
\end{eqnarray*}
with  $A(u) = -mu^{m-1} \underline \Delta$ and 
$f(t,u) = F(t,u)+m(m-1) u^{m-2} \langle\nabla u, \nabla u\rangle_g$.
This suggests to apply  Theorem \ref{ThmCL} with 
\begin{eqnarray*}
\X_0 = \cH^{s,\gamma}_p(\B) \quad \text{and}\quad \X_1 = \scrD(\underline\Delta) = 
\cH^{s+2,\gamma+2}_p(\B) \oplus \underline \scrE_0. 
\end{eqnarray*}
We choose the open neighborhood  $U$ of $u_0$ in the Cl\'ement-Li theorem as follows: 
\begin{eqnarray*}
U= \{u\in  (\cH^{s,\gamma}_p(\B), \scrD(\underline\Delta))_{1-1/q,q}: \frac12\min_\B u_0 <u<2\max_\B u_0. 
\end{eqnarray*}

In order to establish the existence of a short time solution it  suffices to show that 
\begin{enumerate}\renewcommand{\labelenumi}{(\roman{enumi})}
\item $A(u_0) $ has maximal regularity,  
\item Condition (H1) and (H2) in Theorem \ref{ThmCL} are fulfilled. 
\end{enumerate} 

(i) With our choice of $\gamma$ and $\scrD(\underline\Delta)$ we are in the situation of Example \ref{3.10} with $\ell=0$.  
Theorem \ref{T4.8} therefore asserts that $c-\underline\Delta$ has a bounded $H_\infty$-calculus for suitable $c>0$.
 
By Lemma \ref{interpolation} 
the interpolation space $(\cH^{s,\gamma}_p(\B), \scrD(\underline \Delta))_{1-1/q,q}$ 
embeds into $\cA :=\cH^{s+2-2/q-\varepsilon, \gamma+2-2/q-\varepsilon}_p(\B)\oplus \underline\scrE_0$ for arbitrary $\varepsilon>0$. Our assumptions on $s$, $p$ and $q$ imply that $s+2-2/q>|s| +(n+1)/p$ and $\gamma+2-2/q>(n+1)/2$. 

According to Lemma \ref{si}, below, 
$\cA$ is spectrally invariant and therefore closed under holomorphic functional calculus.  
Hence, if the strictly positive function $u$ belongs to $\cA$; 
the same is true for the function $mu^{m-1}$.
Moreover,  Lemma \ref{multiplier} then  implies that $mu^{m-1}$ is a bounded multiplier on $\cH^{s,\gamma}_p(\B)$. 
Hence 
 $$mu^{m-1}\Delta: \scrD(\Delta) \to \cH^{s,\gamma}_p(\B)$$ 
is a bounded linear operator. Since $c-\underline \Delta$ has a bounded $H_\infty$-calculus of any angle $\theta>0$, it also is $\cR$-sectorial for any angle $\theta$. A lengthy perturbation argument, see  \cite[Theorem 6.1]{RS2}, based on a freezing of coefficients, then shows that the operator  $c-mu^{m-1}\underline\Delta$ also is $\cR$-sectorial of any angle $\theta>0$. In particular,  $mu^{m-1}\underline\Delta$ has maximal regularity.  

(ii) Checking  (H1) and (H2) requires more delicate multiplier arguments, see \cite{RS2}.\hfill $\Box$

\begin{lemma}\label{si}  It follows from Lemma \ref{multiplier} that, for $s_0>(n+1)/p$ and $\gamma_0\ge (n+1)/2$,  $\cA= \cH^{s_0,\gamma_0}_p(\B)\oplus \underline\scrE_0$ is a unital Banach subalgebra of $C(\B)$ with a stronger topology.
Moreover, it is spectrally invariant: If  $u\in \cA$ is  invertible in $C(\B)$, then $u^{-1} \in \cA$.
In particular,   $\cA$  is closed under holomorphic functional calculus.
\end{lemma}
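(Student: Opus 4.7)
The plan is to verify, in order, the three claims: that $\cA$ is a unital Banach subalgebra of $C(\B)$ with stronger topology, that $\cA$ is spectrally invariant, and that these together give closure under holomorphic functional calculus.

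First, the continuous embedding $\cA\hookrightarrow C(\B)$ follows from Corollary \ref{2.6} together with Lemma \ref{2.7}, which give $v(x,y)\to 0$ as $x\to 0$ for any $v\in\cH^{s_0,\gamma_0}_p(\B)$ when $s_0>(n+1)/p$ and $\gamma_0\ge(n+1)/2$, combined with the obvious fact that elements of $\underline\scrE_0$ are smooth and bounded. The unit decomposes as $1=\omega\cdot 1+(1-\omega)$, where $\omega\cdot 1\in\underline\scrE_0$ (because $1\in E_0$) and $1-\omega\in C_c^\infty(\Int\B)\subseteq\cH^{s_0,\gamma_0}_p(\B)$. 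For the algebra structure I would expand $(v_1+\omega c_1)(v_2+\omega c_2)$ with $v_i\in\cH^{s_0,\gamma_0}_p(\B)$ and $c_i\in E_0$: the product $v_1v_2$ sits in $\cH^{s_0,\gamma_0}_p(\B)$ by Corollary \ref{algebra}; the cross terms $v_i(\omega c_j)$ lie in $\cH^{s_0,\gamma_0}_p(\B)$ because $\omega c_j$ is smooth on $\B$ with all cone derivatives $(x\partial_x)^j D_y^\alpha$ bounded, hence acts as a multiplier in the spirit of Lemma \ref{multiplier}; and $\omega^2c_1c_2=\omega c_1c_2+\omega(\omega-1)c_1c_2$ splits as a sum in $\underline\scrE_0+\cH^{s_0,\gamma_0}_p(\B)$ because $c_1c_2\in E_0$ (product of locally constant functions) and $\omega(\omega-1)$ has compact support in $\Int\B$.

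For spectral invariance, let $u\in\cA$ be invertible in $C(\B)$ and write $u=v+\omega c$ with $v\in\cH^{s_0,\gamma_0}_p(\B)$ and $c\in E_0$. Since $v$ vanishes at $\partial\B$, the boundary value of $u$ equals $c$, which therefore is nowhere zero; being locally constant, $1/c\in E_0$. The natural candidate inverse is $\omega/c+w$, and the identity $u(\omega/c+w)=1$, rewritten in the collar, forces $w=-v/(uc)$, while outside the collar one simply has $w=u^{-1}$. Everything reduces to showing $w\in\cH^{s_0,\gamma_0}_p(\B)$, i.e., that multiplication by $1/u$ preserves $\cH^{s_0,\gamma_0}_p(\B)$ whenever $u\in\cA$ is bounded away from zero. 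This is the main obstacle. I would resolve it by localization: away from $\partial\B$ the function $1/u$ is an ordinary $H^{s_0}_{p,loc}$ multiplier by the classical spectral invariance of $H^{s_0}_p$ on open subsets of a smooth manifold (with $s_0>(n+1)/p$); near $\partial\B$ I would apply the transformation $S_{\gamma_0}$ of Definition \ref{eq.hsg.2} to transfer the problem to the cylinder $\R\times\partial\B$, where the pushforward of $u$ is bounded away from zero and continuous with the relevant Sobolev regularity, so that the Moser composition estimate for the Sobolev algebra $H^{s_0}_p(\R\times\partial\B)$, applied to the smooth function $z\mapsto 1/z$ on an open neighborhood of $u(\B)\subseteq\C\setminus\{0\}$, yields $1/u-1/c\in H^{s_0}_p$ over the cylinder and hence $w\in\cH^{s_0,\gamma_0}_p(\B)$.

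Once spectral invariance is established, closure under holomorphic functional calculus follows by the standard Dunford--Riesz construction: one has $\sigma_\cA(u)=\sigma_{C(\B)}(u)=u(\B)$, and for $f$ holomorphic on a neighborhood of this compact set and a contour $\Gamma$ encircling it in the domain of $f$, the Bochner integral $f(u)=\frac{1}{2\pi i}\int_\Gamma f(z)(z-u)^{-1}\,dz$ converges in $\cA$ because $z\mapsto(z-u)^{-1}$ is continuous from $\Gamma$ into $\cA$, by spectral invariance and the resolvent identity. The decisive step, and the main source of technical difficulty, is the Moser-type composition estimate underlying step two.
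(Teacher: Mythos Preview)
Your proof is correct and considerably more explicit than the paper's own. The paper simply cites \cite[Lemma 2.5(c)]{RS2} for the Banach algebra structure and the spectral invariance, and then, for the holomorphic functional calculus, writes down the Dunford--Riesz integral and invokes a theorem of Waelbroeck \cite{Wa3} to the effect that spectral invariance implies continuity of inversion, so that the integral converges in $\cA$. Your direct arguments for the first two parts are essentially what one would expect to find behind the cited lemma, and for the third part you in fact avoid Waelbroeck altogether: once $\cA$ is established as a unital Banach algebra, continuity of inversion on the group of units is elementary (Neumann series), so $z\mapsto(z-u)^{-1}$ is continuous on the contour and the Bochner integral converges in $\cA$. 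Your phrase ``by spectral invariance and the resolvent identity'' slightly undersells this---the resolvent identity yields continuity only once a norm bound on the resolvent is in hand, and that bound comes from the Banach algebra structure rather than from spectral invariance per se. One minor technical remark on your spectral invariance step: when $\gamma_0>(n+1)/2$ it is cleaner to carry out the Moser composition argument after the transformation $S_{(n+1)/2}$ rather than $S_{\gamma_0}$, using the embedding $\cH^{s_0,\gamma_0}_p(\B)\hookrightarrow\cH^{s_0,(n+1)/2}_p(\B)$ to place the pushforward of $u$ in $c+H^{s_0}_p(\R\times\partial\B)$; the resulting multiplier property of $1/(uc)$ then carries over to $\cH^{s_0,\gamma_0}_p(\B)$ because, exactly as in the proof of Lemma~\ref{multiplier}, the action of a pointwise multiplier is insensitive to the weight $\gamma$.
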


\begin{proof}This is \cite[Lemma 2.5(c)]{RS2}. The closedness under holomorphic functional calculus can be seen as follows. Let $f$ be holomorphic near the spectrum of $a\in \cA$. For a contour $\scrC$ simply surrounding the spectrum of $a$ we can define 
$$f(a) =(2\pi i)^{-1}  \int_{\scrC} f(\lambda) (\lambda-a)^{-1} \, d\lambda.$$
Spectral invariance shows that $(\lambda-a)^{-1} \in \cA$. It also implies the continuity of inversion by a result of Waelbroeck  \cite{Wa3}, so that the integral converges in $\cA$.   Hence $f(a)\in \cA$. 
\end{proof}

\subsection{Global Solutions}  
A natural question is whether the solution found in Theorem \ref{T5.1} exists for all times. 
It was answered affirmatively in \cite[Theorem 1.1]{RS3}, namely: 

\begin{theorem}\label{T5.2}
Under the conditions of Theorem \ref{T5.1}, the initial value for the PME without forcing term
\begin{eqnarray}\label{IPME}
\partial_t u -\Delta(u^m) =0, \quad u|_{t=0}=u_0,
\end{eqnarray}
has, for all $T>0$, a unique solution 
$$u\in W^1_q(0,T:\cH^{s,\gamma}_p(\B)) \cap L^q(0,T; \scrD(\underline\Delta)).$$
\end{theorem}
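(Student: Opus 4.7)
The plan is to extend the local solution from Theorem \ref{T5.1} to a global one by invoking the continuation principle for quasilinear problems with maximal regularity. Let $[0,T_{\max})$ be the maximal interval of existence of the solution $u$ with $T_{\max}\le \infty$. By a standard argument, if $T_{\max}<\infty$ then either $u(t)$ exits the open set $U$ on which $A(u)=-mu^{m-1}\underline\Delta$ has maximal regularity (i.e.\ $u$ fails to stay bounded away from $0$ and $\infty$), or the norm of $u(t)$ in the trace space $(\cH^{s,\gamma}_p(\B),\scrD(\underline\Delta))_{1-1/q,q}$ blows up as $t\uparrow T_{\max}$. Ruling out both alternatives will give $T_{\max}=\infty$.

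For the first alternative, I would prove the pointwise two-sided bound
\begin{eqnarray*}
\min_{\B} u_0 \;\le\; u(t,\cdot)\;\le\; \max_{\B} u_0,\qquad t\in[0,T_{\max}),
\end{eqnarray*}
by a weak maximum principle adapted to the conic setting. Because constants lie in $\scrD(\underline\Delta)$ through the component $\underline\scrE_0$ in \eqref{4.6}, any constant $C\ge\max u_0$ and $c\le\min u_0$ is a supersolution and subsolution respectively of \eqref{IPME}. Testing against $(u-C)_+\,u^{m-1}$ (or its regularization) and using the conic integration-by-parts formula induced by \eqref{nabla} yields the comparison, once one verifies that the boundary terms at $x=0$ vanish thanks to the decay forced by the weight $\gamma+2$ in Corollary \ref{2.6} and Lemma \ref{2.7}. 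This keeps $u(t)$ strictly positive and bounded, so $u(t)\in U$ throughout $[0,T_{\max})$.

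For the second alternative, I would derive a uniform-in-$t$ bound on $\|u(t)\|$ in the trace space. The strategy is to show first a uniform estimate in the spectrally invariant algebra $\cA=\cH^{s_0,\gamma_0}_p(\B)\oplus \underline\scrE_0$ from Lemma \ref{si} for suitable $s_0>(n+1)/p$ and $\gamma_0\ge(n+1)/2$, which is enough to keep the maximal regularity constants of $A(u(t))$ bounded. This uses conic energy estimates: multiplying \eqref{IPME} by $mu^{m-1}$ and integrating with respect to $\mu_g$ controls $\int_0^t\|\nabla (u^{(m+1)/2})\|_g^2\,ds$ and hence a weighted $\cH^{1,\gamma+1}_2$-norm; combining with the $L^\infty$ bound from the comparison principle, the algebra property of $\cA$, and the multiplier property of Lemma \ref{multiplier}, one bootstraps to the required control of $u$ in $\cA$. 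Finally, the maximal $L^q$-regularity estimate for $A(u_0)$, propagated to $A(u(t))$ by the perturbation argument recalled in the proof of Theorem \ref{T5.1}, closes the estimate on the trace norm on any finite interval.

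The main obstacle will be the bootstrap step producing the global $\cA$-bound, because the chain-rule identity \eqref{Deltaum} forces one to control the quadratic term $u^{m-2}\langle\nabla u,\nabla u\rangle_g$ in the weighted Sobolev norm. The conic weight $x^{-2}$ in \eqref{nabla} makes this a borderline term; handling it requires combining the algebra structure of $\cA$ with the interpolation embeddings of Lemma \ref{interpolation} and a careful use of the spectral invariance so that negative powers $u^{m-2}$, $u^{m-1}$ remain in $\cA$ with quantitative bounds depending only on $\min u$ and $\|u\|_\cA$. Once this estimate is in place, the continuation criterion is satisfied and the solution extends to $[0,T]$ for every $T>0$.
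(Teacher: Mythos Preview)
Your first step---the comparison argument yielding the two-sided pointwise bound $M_{\min}\le u(t,\cdot)\le M_{\max}$---is exactly what the paper does (it is stated as Theorem~\ref{T5.3}), and your outline of it via constant sub/supersolutions in $\underline\scrE_0$ is the right idea.

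The second step, however, has a genuine gap. The energy identity you propose, $\int_0^t\|\nabla(u^{(m+1)/2})\|_{L^2(\B,\mu_g)}^2\,ds\le C$, together with the $L^\infty$-bound, gives only $L^2_t\,\cH^{1,\cdot}_2$-type control. There is no mechanism by which this, the algebra property of $\cA$, and Lemma~\ref{multiplier} alone can produce a \emph{uniform in $t$} bound on $\|u(t)\|_{\cA}$ with $\cA=\cH^{s_0,\gamma_0}_p(\B)\oplus\underline\scrE_0$ for $s_0>(n+1)/p$: the algebra and multiplier properties let you estimate products once you already control the factors in $\cA$, but they do not manufacture the missing derivatives. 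The circularity you are trying to break---maximal regularity constants of $A(u(t))$ depend on a norm of $u(t)$ that you are simultaneously trying to bound---is not broken by the basic energy estimate; something that yields a uniform modulus of continuity of $u$ depending only on $M_{\min},M_{\max}$ is needed.

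The paper supplies precisely this missing ingredient as Theorem~\ref{T5.4}: a uniform H\"older estimate $u\in C^\alpha([0,T)\times\B)$, with both $\alpha$ and the H\"older norm depending only on $n$, $M_{\min}$, $M_{\max}$ and the metric $g$, adapted to the conic setting from the Lady\v zenskaya--Solonnikov--Ural'ceva theory (via Amann's argument). This De~Giorgi--Nash--Moser-type result is what lets one bound the coefficients of $A(u(t))$ uniformly and iterate the short-time existence with a step size bounded below, extending the solution to any finite $T$. Your proposal should replace the energy-bootstrap paragraph with an invocation (or adaptation) of such a uniform parabolic H\"older estimate.
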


The proof of  Theorem \ref{T5.2} relies on two additional results, namely a comparison principle 
and a result on uniform H\"older continuity. 

\begin{theorem}[Comparison principle]\label{T5.3}
Let $s$, $\gamma$, $p$, $q$ be  as in Theorem \ref{T5.1}, and let $u$ and $v$ in  $W^1_q(0,T;\cH^{s,\gamma}_p(\B)) \cap L^q(0,T; \scrD(\underline\Delta))$ 
be solutions to \eqref{IPME} for initial values $u_0, v_0\in (\cH^{s,\gamma}_p(\B), \scrD(\underline\Delta))_{1-1/q,q}$ and some $T>0$.   
Then $u\le v$ on $[0,T]\times\mathbb{B}$, if $u_0\le v_0$.

In particular, if  $0<M_{\min}\le u_0\le M_{\max}$ for suitable constants $M_{\min}$ and $M_{\max}$, then 
$M_{\min}\le u\le M_{\max}$ on $[0,T]\times\mathbb{B}$.
\end{theorem}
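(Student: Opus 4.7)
The plan is to establish the standard $L^1$-contraction inequality for the porous medium flow: assuming $u_0 \le v_0$ (a hypothesis that appears to be implicit in the statement), I will show
\begin{equation*}
\frac{d}{dt}\int_\B (u-v)^+\,d\mu_g \le 0,
\end{equation*}
which immediately yields $u\le v$ on $[0,T]\times\B$, since $(u_0-v_0)^+\equiv 0$. The key algebraic observation is that, setting $w=u-v$ and $\phi=u^m-v^m$, the PME reads $\partial_t w=\Delta \phi$, and, on the range $[M_{\min},M_{\max}]\subset(0,\infty)$ guaranteed by the strict positivity of the initial data in the setting of Theorem \ref{T5.1}, the strictly increasing function $s\mapsto s^m$ forces $\phi$ and $w$ to share the same pointwise sign.

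I would implement this by picking a non-decreasing smooth approximation $\psi_\varepsilon$ of the Heaviside function, vanishing on $(-\infty,0]$ and equal to $1$ on $[\varepsilon,\infty)$, testing the equation against $\psi_\varepsilon(\phi)$ in $L^2(\B,\mu_g)$, integrating by parts, and invoking the chain rule $\nabla\psi_\varepsilon(\phi)=\psi_\varepsilon'(\phi)\nabla\phi$ to obtain
\begin{equation*}
\int_\B \psi_\varepsilon(\phi)\,\partial_t w\,d\mu_g \;=\; -\int_\B \psi_\varepsilon'(\phi)\,|\nabla\phi|_g^2\,d\mu_g \;\le\; 0.
\end{equation*}
Sending $\varepsilon\to 0$, dominated convergence turns the left-hand side into $\frac{d}{dt}\int_\B w^+\,d\mu_g$, as $\{\phi>0\}=\{w>0\}$ pointwise, and integrating in $t$ gives the claim.

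The principal obstacle is justifying the integration by parts on the conically degenerate manifold and ensuring that $\phi$ and $\nabla\phi$ have the needed regularity. For this I would exploit that $\scrD(\underline{\Delta})=\cH^{s+2,\gamma+2}_p(\B)\oplus\underline{\scrE}_0$ is a Banach algebra by Corollary \ref{algebra} and spectrally invariant in $C(\B)$ by Lemma \ref{si}; applying the holomorphic functional calculus to $s\mapsto s^m$ on a complex neighbourhood of $[M_{\min},M_{\max}]\subset(0,\infty)$ then puts $u^m$ and $v^m$ into the same space, so $\phi$ admits a decomposition ``constant near $\partial\B$ plus an $\cH^{s+2,\gamma+2}_p$-term'' and in particular lies in $\scrD(\underline{\Delta})$. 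To carry out the IBP rigorously I would truncate to $\B\setminus([0,\delta[\times\partial\B)$ and apply the usual divergence theorem there; the resulting boundary contribution at $x=\delta$ picks up a factor $\delta^n\sqrt{\det h(\delta)}$ from the volume form $x^n\sqrt{\det h(x)}\,dx\,dy$, so the boundedness of $\psi_\varepsilon(\phi)\,\partial_x\phi$ on the collar (which follows from the decomposition above) forces this term to zero as $\delta\to 0$ for $n\ge 1$. The ``in particular'' statement then follows by applying the just-proved comparison to the pairs $(M_{\min},u)$ and $(u,M_{\max})$, noting that the constant functions $M_{\min}$ and $M_{\max}$ are stationary solutions of the PME lying in $\scrD(\underline{\Delta})$, since $\underline{\scrE}_0$ contains the functions that are constant near the tip.
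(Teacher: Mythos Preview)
Your approach is exactly the one the paper intends: the paper does not give a detailed argument but refers to \cite[Theorem 4.2]{RS3}, stating that the proof there adapts the classical $L^1$-contraction argument from \cite[Theorem 6.5]{V07} to the conic setting---precisely the Heaviside-test-function method you describe. You also correctly spot that the hypothesis $u_0\le v_0$ is tacitly assumed in the statement.

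One small imprecision: you claim that $\psi_\varepsilon(\phi)\,\partial_x\phi$ is \emph{bounded} on the collar. This is not quite guaranteed by the decomposition $\phi=\phi_1+c\omega$ with $\phi_1\in\cH^{s+2,\gamma+2}_p(\B)$; Corollary~\ref{2.6} applied to $x\partial_x\phi_1\in\cH^{s+1,\gamma+2}_p(\B)$ only gives $|\partial_x\phi_1(x,y)|\le Cx^{\gamma+1-(n+1)/2}$, and under \eqref{6.3} the exponent $\gamma+1-(n+1)/2$ can be negative (though $>-1$). However, your conclusion survives: the boundary term at $x=\delta$ is controlled by $\delta^n\cdot\delta^{\gamma+1-(n+1)/2}=\delta^{(n+1)/2+\gamma}$, and $(n+1)/2+\gamma>0$ since $\gamma>(n-3)/2\ge -(n+1)/2$ for $n\ge1$. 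So the boundary contribution still vanishes as $\delta\to0$; just replace ``boundedness'' by this weighted estimate.
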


This is \cite[Theorem 4.2]{RS3}. The {\em proof} was adapted from the proof in the classical case, see Theorem 6.5 in \cite{V07}.

\begin{theorem}[H\"older continuity] \label{T5.4}
Let $s$, $\gamma$, $p$, $q$ be  as in Theorem \ref{T5.1}, and  let $u$ in  
$W^1_q(0,T:\cH^{s,\gamma}_p(\B)) \cap L^q(0,T; \scrD(\underline\Delta))$  be a solution of \eqref{IPME} with initial value $u_0$. Then $u$ is H\"older continuous on ${[0,T[} \times\B$, i.e. $u\in C^\alpha({[0,T[}\times\B)$ for some $\alpha>0$. Both $\alpha$ and $\|u\|_{C^\alpha({[0,T[} \times\B)}$ depend only on the dimension $n$, the bounds $M_{\min}$ and $M_{\max}$ for the initial value $u_0$ and the metric $g$, not on $T$. 
\end{theorem}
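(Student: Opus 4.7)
The plan is to exploit the uniform two-sided bounds from the comparison principle to reduce the degenerate porous medium equation to a uniformly parabolic quasilinear problem, and then to combine classical interior De~Giorgi--Nash--Moser estimates with a dyadic rescaling near the tip to obtain H\"older regularity up to $\partial\B$ with constants independent of $T$.

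By Theorem \ref{T5.3}, the solution satisfies $0<M_{\min}\le u\le M_{\max}$ on $[0,T]\times\B$. Writing $\Delta(u^m)=\operatorname{div}(mu^{m-1}\nabla u)$, equation \eqref{IPME} becomes
$$
\partial_t u-\operatorname{div}(a(u)\nabla u)=0,\qquad a(u)=mu^{m-1},
$$
with $a(u)$ taking values in a compact subset $[\lambda,\Lambda]\subset(0,\infty)$ depending only on $m$, $M_{\min}$, $M_{\max}$. On compact subsets of $\Int\B\times(0,T)$, classical interior parabolic regularity (Ladyzhenskaya--Solonnikov--Ural'tseva, or equivalently De~Giorgi--Nash) then yields a local H\"older estimate with exponent $\alpha$ and constant depending only on $n$, $\lambda$, $\Lambda$, and the local geometry of $g$, in particular independent of $T$.

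The main difficulty is extending the estimate across the singular locus $\{x=0\}$. For $y_0\in\partial\B$ and $x_0\in(0,\tfrac12]$, I would perform the parabolic rescaling $\tilde u(x,y,\tau):=u(x_0x,y,x_0^2\tau)$ on $[\tfrac12,2]\times V\times[\tau_0,\tau_0+1]$ with $V$ a coordinate patch around $y_0$. The pulled-back metric is
$$
x_0^{-2}g=dx^2+x^2h(x_0x),
$$
a smooth family of non-degenerate Riemannian metrics on $[\tfrac12,2]\times V$, uniformly equivalent to $dx^2+x^2h(0)$ for $x_0\in[0,\tfrac12]$. Since $\tilde u$ satisfies the same uniformly parabolic equation with respect to this family of smooth metrics and obeys the same $L^\infty$ bounds, the interior H\"older estimate from the previous step applies with constants uniform in $x_0$. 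Unscaling returns a H\"older bound on the dyadic shell $\{x_0\le x\le 2x_0\}\times V\times[t_0,t_0+x_0^2]$ measured in the parabolic distance $d_g$ induced by $g$. Summing over dyadic scales $x_0=2^{-k}$ and using that the $g$-distance from a collar point to the tip is comparable to $x$, a telescoping argument produces a global H\"older estimate on $[0,T)\times\B$ up to and including the tip, with exponent and constant depending only on $n$, $M_{\min}$, $M_{\max}$, and $g$. H\"older continuity up to $t=0$ follows because the assumptions of Theorem \ref{T5.1} force $u_0$ to lie in an interpolation space that embeds into a H\"older space, and the parabolic smoothing preserves this regularity.

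The main obstacle I expect is the uniformity of the De~Giorgi--Nash--Moser constants in the scaling parameter $x_0$: one has to verify that the Sobolev and Poincar\'e inequalities on $[\tfrac12,2]\times V$ hold with constants independent of $x_0$ for the family $x_0^{-2}g$, and that the ellipticity constants of the rescaled equation remain controlled as $x_0\to0$. Both facts rest on the smooth dependence of $h(x)$ on $x$ down to $x=0$ together with the compactness of the rescaled domain; once this uniformity is in hand, the telescoping across dyadic scales becomes a routine geometric argument.
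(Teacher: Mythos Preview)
Your overall strategy---reduce to a uniformly parabolic equation via the two-sided bounds from the comparison principle and then invoke De~Giorgi--Nash--Moser---matches what the paper indicates (it cites this result as an adaptation of Lady\v{z}enskaya--Solonnikov--Ural'ceva \cite[II.8]{LSU}, carried out in \cite[Theorem~4.4]{RS3} following \cite{Am3}). The interior estimate and the rescaling idea are sound.

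The gap is in the step ``Summing over dyadic scales \ldots\ a telescoping argument produces a global H\"older estimate \ldots\ up to and including the tip.'' Uniform \emph{rescaled} interior H\"older estimates on dyadic annuli do \emph{not} by themselves yield H\"older continuity at the vertex. After unscaling, the $d_g$-H\"older seminorm on the $k$-th shell is $C\,x_0^{-\alpha}\,\mathrm{osc}_k(u)$ with $x_0=2^{-k}$; since you only know $\mathrm{osc}_k(u)\le M_{\max}-M_{\min}$, this blows up as $k\to\infty$. A concrete obstruction: the function $v(x,y)=\sin(\log x)$ enjoys uniformly bounded Lipschitz estimates on every rescaled annulus $[\tfrac12,2]$, yet has no limit as $x\to0$. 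Your telescoping sum $\sum_k|u(p_k)-u(p_{k+1})|$ therefore has each term of order one and diverges.

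What is actually needed is an \emph{oscillation decay} for balls centred at the vertex,
\[
\mathrm{osc}_{\{x<r\}\times(t_0-r^2,t_0)}\,u \ \le\ \theta\cdot \mathrm{osc}_{\{x<2r\}\times(t_0-4r^2,t_0)}\,u,\qquad \theta<1,
\]
with $\theta$ depending only on $n$, $M_{\min}$, $M_{\max}$, $g$. This does not follow from interior De~Giorgi--Nash on annuli; it requires a parabolic Harnack inequality valid \emph{across} the singular point, for which one must verify volume doubling and a Poincar\'e inequality on the balls $\{x<r\}$ in the cone metric $dx^2+x^2h(0)$ (both hold, but need to be checked). Your closing paragraph flags uniformity of Sobolev/Poincar\'e constants on the annulus $[\tfrac12,2]\times V$ as the obstacle, but that is the wrong inequality: the relevant Poincar\'e is the one on $\{x<r\}$, not on the shell. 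Once the oscillation decay is in hand, the H\"older estimate in $d_g$ follows, and since $d_g$ is dominated by any smooth product metric on the collar, this yields $C^\alpha$ on $\B$ as stated.
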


This is an adaptation of a result by Lady\v zenskaya, Solonnikov and Ural'ceva in  Section II.8 of \cite{LSU} to the case of manifolds with conical singularities, see \cite[Theorem 4.4]{RS3}. The proof was inspired by Amann's proof of \cite[Lemma 5.1]{Am3}.  

We can now  complete the proof of Theorem \ref{T5.2}: The comparison principle implies that  $M_{\min}$ and $M_{\max}$ in Theorem \ref{T5.3} will bound the solution $u$ as long as it exists. Hence Theorem \ref{T5.4} implies uniform H\"older bounds for $u$ for all times.  This allows to extend the solution, initially defined on a small interval ${[0,T'[}$, iteratively to the given interval  ${[0,T[}$. \hfill $\Box$

\subsection{Asymptotics near the Conical Points}

Another interesting  question is how the shape of the cross-section of the cone influences the solutions to the porous medium equation, in other words, how the gas enters the tip of the cone. To find this out, one can start with an initial value $u_0$ which, near $\partial \B$, is of the form 
$$u_0 = c_0  + \cO(x^\infty)$$
for a constant $c_0>0$ and look how it evolves. This problem was addressed in \cite{RS5}. 

We briefly recall the setting. We use the extension $\underline\Delta$ of $\Delta$ in Example \ref{3.9} and  choose $k\in \N_0$ such that 
$q^-_{k+1} \le -2<q_k^- $ and $\gamma$ with 
\begin{eqnarray}\label{5.7}
-2<\frac{n+1}2-\gamma-2<q_k^-\quad \text{and} \quad \frac{n+1}2-\gamma\not= q_j^+
\end{eqnarray}
for all $j$. Then we fix, for arbitrary $s$ and $1<p<\infty$,  
\begin{eqnarray}\label{5.8} 
\scrD(\underline\Delta) = \cH^{s+2,\gamma+2}_p (\B) \oplus \bigoplus_{j=1}^k\scrE_{q_j^-} \oplus \underline\scrE_0,
\end{eqnarray}
where, as before,  $\underline\scrE_0 =\{u: u(x,y) =\omega(x) e(y) ; e\in E_0\}$. 

In case  $q_1^-\le -2$, the sum over the $\scrE_{q_j^-}$, $j=1,\ldots,k$, is void and  we obtain the same extension as in Theorem \ref{T5.1}. 
In general, however, this domain may be smaller due to the restriction on $\gamma$ in \eqref{5.7}. It contains the maximal number of asymptotics spaces. 
We therefore expect to see more asymptotics terms in the solution. This turns out to be true. 
However, we will have to use a different approach:
We recall that the substitution $v=u^m$  
transforms the initial value problem 
\begin{eqnarray*}
\partial_t u -\Delta(u^m) = F(t,u), \quad u|_{t=0}=u_0,
\end{eqnarray*}
into 
\begin{eqnarray}
\label{PMEAlt}
\partial_tv -mv^{\frac{m-1}m}\Delta v = G(t,v) , \quad v|_{t=0} = v_0,
\end{eqnarray}
with $G(t,v) = m v^{(m-1)/m}F(t,v^{1/m})$ and $v_0 = u_0^m$.
The following is Theorem 1.4 in \cite{RS5}.


\begin{theorem}\label{T5.7}
Choose $\gamma$ as in \eqref{5.7} and fix $1<p,q<\infty$ so large that 
\begin{eqnarray*}
\frac{n+1}p +\frac2q<1\quad\text{and}\quad \frac{n+1}2-\gamma-2+\frac4q<0.
\end{eqnarray*}

For $s>-1+(n+1)/p + 2/q$ consider the extension $\underline \Delta $ of $\Delta$  with domain 
\eqref{5.8}.
Let
$v_0\in (\cH^{s,\gamma}_p(\B), \scrD(\underline\Delta))_{1-1/q,q}$
be strictly positive on $\B$ and let $U$ be an open neighborhood of $v_0$ in  $(\cH^{s,\gamma}_p(\B), \scrD(\underline\Delta))_{1-1/q,q}$. 

Then  the initial value  problem 
in $L^q(0,T_0;\cH^{s,\gamma}_p(\B))$ 
\begin{eqnarray}
\partial_t v -mv^{\frac{m-1}m}\Delta(v) = G(t,v), \quad v|_{t=0}=v_0,
\end{eqnarray}
with  $G\in C^{1-,1-}([0,T_0]\times U; \cH^{s,\gamma}_p(\B) )$ has a unique solution $v$ for some  $T\le T_0$,  and
\begin{eqnarray}\label{5.10} 
v\in W^1_q(0,T; \cH^{s,\gamma}_p(\B)) \cap L^q(0,T; \scrD(\underline \Delta)).
\end{eqnarray}
\end{theorem} 

\begin{proof} Theorem \ref{T4.6} shows that $c-\underline\Delta$ has a bounded $H_\infty$-calculus in $\cH^{s,\gamma}_p(\B)$ for any angle $\theta>0$. From there, one can argue as in the proof of Theorem \ref{T5.1}. 
\end{proof}

An application of Lemma \ref{interpolation} shows that, with $r$ chosen as in Lemma \ref{interpolation}(ii),
\begin{eqnarray}\nonumber
 v&\in& C([0,T]; (\cH^{s,\gamma}_p(\B), \scrD(\underline\Delta))_{1-1/q,q})\\
 &\subseteq& 
C([0,T]; \cH^{s+2-2/q-\varepsilon,\gamma+2-2/q-\varepsilon}_p(\B)\oplus \bigoplus_{j=1}^r \scrE_{q_j^-}\oplus \underline\scrE_0).\label{Asymp_v}
\end{eqnarray}

To understand the  asymptotics of $u=v^{1/m}$  we introduce a new algebra: 

\begin{definition}
\label{scrB}
Let $1<p<\infty$, $ s_0>(n+1)/p$,  $ \gamma_0>(n+1)/2$ and $r\in \{0,\ldots,k\}$ with
$
\max\{-2,q_{r+1}^-\} <(n+1)/2- \gamma_0< q_r^-.
$
Denote by $\scrB$ the subalgebra of $C(\B)$ generated by the functions in $\cH^{s_0, \gamma_0}_p(\B)$, $\scrE_{q_j^-}$, $1\le j\le r$, and $\underline\scrE_0$. 
\end{definition}
 
\begin{theorem}\label{SIB}
$\scrB$ is spectrally invariant in $C(\B)$ and hence closed under holomorphic functional calculus.  
\end{theorem}

This is a refined version of Lemma \ref{si}. 
For a proof see \cite[Theorem 1.8]{RS5}.  
An element   $w\in \scrB$ can be written in the  form $w= u+p$, where $u\in \cH^{s_0,\gamma_0}_p(\B)$ and $p$ is a polynomial in functions in $\underline \scrE_0$, $\scrE_{q_1^-},\ldots, \scrE_{q_r^-}$. We can confine ourselves to products with at most $N$ factors in $\scrE_{q_1^-}, \ldots, \scrE_{q_r^-}$, where $N$ is the largest integer such that $(n+1)/2-\gamma_0<Nq_1^-$\; (recall that $q_1^-<0)$, as products with more of these factors will belong to $\cH^{s_0,\gamma_0}_p(\B) $. \smallskip

We can now answer the question raised above:

\begin{corollary} Consider the initial value problem  
$$\partial_t u -\Delta (u^m) = 0, \quad u(0) = u_0,$$
with  $u_0$ of the form 
$u_0 = c_0+w_0,$
where $c_0>0$ is a constant and $0\le w_0\in \cH^{s+2,\gamma+2}_p(\B)$. The substitution $v=u^m$ furnishes  a solution $v$ with  asymptotics \eqref{Asymp_v}. Hence $u=v^{1/m}$ 
will belong to the space  $C([0,T]; \scrB)$,  where we consider $\scrB$ for $s_0 = s+2-2/q-\varepsilon $, $\gamma_0=\gamma+2-2/q-\varepsilon $ with  small $\varepsilon>0$ and use  that  $v\mapsto v^{1/m}$ is continuous on the group  of invertible elements of $\scrB$. 
\end{corollary}

\subsection{Further Results}
{\bf Weak solutions}. So far, we have considered only the case of a strictly positive initial value $u_0$. In order to treat the porous medium equation also for the case, where  $u_0$
possibly vanishes on some part of $\B$, a concept of weak solutions, modeled on Aronson and Peletier's definition in \cite[Section 3]{AP81}, was introduced in \cite{RS3}:

\begin{definition}\label{defweak}
For $T>0$, let $\mathbb{B}_{T}=\mathbb{B}\times {]0,T]}$. A   weak solution of the initial value problem \eqref{IPME} is a function $v:\overline{\mathbb{B}}_{T}\rightarrow [0,\infty[$ such that
\begin{itemize}
\item[(i)] $\nabla v^{m}$ exists in the sense of distributions in $\mathbb{B}_{T}$ and 
$$
\int_{\mathbb{B}_{T}}(|v|^{2}+\langle\nabla v^m,\nabla v^m\rangle_g)dtd\mu_{g}<\infty,
$$
where  $d\mu_{g}$ denotes the  measure induced by the metric $g$.
\item[(ii)]\ For any $\phi\in C^{1}(\mathbb{B}_{T})$ such that $\phi=0$ on 
$\mathbb{B}\times \{T\}$, 
$$
\int_{\mathbb{B}_{T}}(\langle\nabla\phi,\nabla v^{m}\rangle_{g}-(\partial_{t}\phi)v)dtd\mu_{g}=\int_{\mathbb{B}}\phi(z,0)v(z,0)d\mu_{g}.
$$
\end{itemize}
\end{definition}

This led to the following result: 

\begin{theorem}[Existence of a weak solution]
Let $m\ge 1$, $s=0$ and $\gamma$, $p$, $q$ as in Theorem \ref{T5.1}. Furthermore, let
\[
u_{0}\in(\mathcal{H}_{p}^{0,\gamma}(\mathbb{B}), \mathcal{H}_{p}^{2,\gamma+2}(\mathbb{B})\oplus\underline{\mathscr{E}}_0,)_{1-1/q,q}
\]
be non-negative on $\mathbb{B}$. Then, for any $T>0$, the initial value problem \eqref{IPME} for the porous medium equation possesses a unique weak solution $u$ on $\mathbb{B}_T$.
\end{theorem}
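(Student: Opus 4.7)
The plan is to obtain the weak solution as a monotone/compact limit of classical maximal-regularity solutions with strictly positive initial data, and then to settle uniqueness by a standard duality argument adapted to the conic setting. Set $u_{0,\varepsilon}=u_0+\varepsilon$ for $\varepsilon\in(0,1]$. Since $u_0$ lies in the interpolation space $(\cH^{0,\gamma}_p(\B),\cH^{2,\gamma+2}_p(\B)\oplus\underline{\scrE}_0)_{1-1/q,q}$ and constant functions (supported near $\partial\B$ by a cut-off and extended smoothly elsewhere) belong to $\underline{\scrE}_0$, the shifted datum $u_{0,\varepsilon}$ is still in this interpolation space and is strictly positive with $\varepsilon\le u_{0,\varepsilon}\le\|u_0\|_\infty+1$. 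Theorem \ref{T5.2} therefore yields, for every $T>0$, a unique solution
\begin{eqnarray*}
u_\varepsilon\in W^1_q(0,T;\cH^{0,\gamma}_p(\B))\cap L^q(0,T;\scrD(\underline\Delta))
\end{eqnarray*}
of \eqref{IPME} with $u_\varepsilon|_{t=0}=u_{0,\varepsilon}$.

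Next I would extract uniform estimates. The comparison principle (Theorem \ref{T5.3}) applied to $u_\varepsilon$ with the constant super- and subsolutions $\varepsilon$ and $\|u_0\|_\infty+1$ gives
\begin{eqnarray*}
0<\varepsilon\le u_\varepsilon(t,z)\le\|u_0\|_\infty+1\qquad\text{on }[0,T]\times\B,
\end{eqnarray*}
uniformly in $\varepsilon$. Theorem \ref{T5.4} then produces a $\varepsilon$-independent Hölder bound $\|u_\varepsilon\|_{C^\alpha([0,T]\times\B)}\le C$ for some $\alpha>0$ depending only on $n$, $g$ and $\|u_0\|_\infty$. By Arzelà--Ascoli and a diagonal argument a subsequence $u_{\varepsilon_k}$ converges uniformly on $[0,T]\times\B$ to a limit $u\in C^\alpha([0,T]\times\B)$ with $0\le u\le\|u_0\|_\infty+1$ and $u(\cdot,0)=u_0$.

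To pass to the limit in the PDE one needs an energy estimate. Testing $\partial_t u_\varepsilon-\Delta(u_\varepsilon^m)=0$ against $u_\varepsilon^m$ in $L^2(\B,\mu_g)$ (which is permissible because $u_\varepsilon$ has the regularity of Theorem \ref{T5.2} and $m\ge 1$) and integrating by parts on $\Int\B$ yields
\begin{eqnarray*}
\frac{1}{m+1}\frac{d}{dt}\int_\B u_\varepsilon^{m+1}\,d\mu_g
+\int_\B\langle\nabla u_\varepsilon^m,\nabla u_\varepsilon^m\rangle_g\,d\mu_g=0;
\end{eqnarray*}
the boundary contributions at the tip vanish because $u_\varepsilon^m$ and $\nabla u_\varepsilon^m$ have conic asymptotics controlled by $\scrD(\underline\Delta)$ and the measure $d\mu_g$ carries a factor $x^n$. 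Integrating in time and using the uniform $L^\infty$ bound for $u_\varepsilon$ provides a bound on $\nabla u_{\varepsilon_k}^m$ in $L^2(\B_T;T^*\B)$ that is independent of $k$. Hence a further subsequence of $\nabla u_{\varepsilon_k}^m$ converges weakly in $L^2$; uniform convergence of $u_{\varepsilon_k}$ identifies the weak limit as $\nabla u^m$ in the distributional sense, and verifies condition (i) of Definition \ref{defweak}. Multiplying $\partial_t u_{\varepsilon_k}=\Delta(u_{\varepsilon_k}^m)$ by a test function $\phi\in C^1(\B_T)$ with $\phi(\cdot,T)=0$, integrating by parts, and passing to the limit using uniform and weak convergence gives the integral identity (ii); the boundary terms at $x=0$ vanish as above.

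For uniqueness I would imitate the classical Oleinik--Aronson--Peletier argument in the form presented in \cite{V07}. Given two weak solutions $u,v$ with the same initial value, take the difference of their weak formulations against a test function $\phi$ chosen as an approximate solution of the adjoint (backward) problem $\partial_t\phi+a(t,z)\Delta\phi=-\chi$ with $a=(u^m-v^m)/(u-v)\ge 0$ and $\chi\in C^\infty_c(\B_T)$; regularize $a$ to strictly positive coefficients, use the maximal regularity of $-\underline\Delta$ proved via Theorem \ref{T4.8} to solve the regularized adjoint equation, and pass to the limit in the regularization to conclude $\int\chi(u-v)=0$, hence $u=v$. The main obstacle here is to make this duality argument respect the conic structure: one has to control the behavior at the tip of the approximate adjoint solutions and justify the integrations by parts using the admissible asymptotics in $\scrD(\underline\Delta)$, but this is exactly what the maximal-regularity framework developed in Sections 3 and 4 provides.
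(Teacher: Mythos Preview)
The paper itself does not give a proof of this theorem; it only states the result, which is taken from \cite{RS3} and modeled on Aronson--Peletier. Your overall strategy---approximate $u_0$ by $u_0+\varepsilon$, use the global maximal-regularity solutions from Theorem~\ref{T5.2}, derive uniform a~priori bounds, and pass to the limit---is exactly the right one and is what is carried out in \cite{RS3}.

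There is, however, a genuine gap in your compactness step. You claim that Theorem~\ref{T5.4} gives an $\varepsilon$-independent H\"older bound for $u_\varepsilon$, but the statement of Theorem~\ref{T5.4} says explicitly that both the exponent $\alpha$ and the norm $\|u\|_{C^\alpha}$ depend on $M_{\min}$ \emph{and} $M_{\max}$. In your situation $M_{\min}=\varepsilon$, so as $\varepsilon\to0$ this estimate degenerates and you cannot invoke Arzel\`a--Ascoli uniformly in $\varepsilon$. This is not a cosmetic issue: the whole point of the degenerate problem is precisely that the uniform parabolicity is lost where $u$ vanishes, and Theorem~\ref{T5.4} is a nondegenerate estimate.

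The standard repair (and the one used in \cite{RS3}, following \cite{AP81} and \cite{V07}) is to exploit \emph{monotonicity} rather than equicontinuity. The comparison principle (Theorem~\ref{T5.3}) applied to two approximations gives $u_{\varepsilon_1}\le u_{\varepsilon_2}$ whenever $\varepsilon_1<\varepsilon_2$, so $u_\varepsilon$ decreases pointwise to a limit $u\ge0$ as $\varepsilon\downarrow0$. Dominated convergence then gives $u_\varepsilon\to u$ and $u_\varepsilon^m\to u^m$ in every $L^r(\B_T,d\mu_g)$, $r<\infty$. Your energy identity, which \emph{is} uniform in $\varepsilon$ because its right-hand side is controlled by $\|u_0\|_\infty+1$ alone, then gives a uniform bound on $\nabla u_\varepsilon^m$ in $L^2(\B_T)$; weak compactness and the strong convergence of $u_\varepsilon^m$ identify the weak limit as $\nabla u^m$, and the passage to the limit in the weak formulation (ii) of Definition~\ref{defweak} goes through as you wrote. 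The uniqueness argument via the dual linearized problem is correct in outline.
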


 
\noindent{\bf Boundary value problems}. Based on  work by Coriasco, Schrohe and Seiler in \cite{CSS1} and \cite{CSS2}, the porous medium equation was studied in \cite{RSS} on manifolds with boundary and conical singularities.  Theorem 7.7 in \cite{RSS} gives an analog of Theorem \ref{T5.1} for the initial value problem with Neumann boundary conditions. 
The analysis of boundary problems on manifolds with conical singularities, however, is much more involved, see e.g.,  \cite{SchulzeWiley}. \medskip


\noindent{\bf The Yamabe flow on conic manifolds.}
The problem here is to find a family of metrics $g(t)$, $t\ge0$, satisfying 
\begin{eqnarray}\label{eq.Y}
\partial_t g(t) + R_{g(t)}g(t) = 0, \quad g(0) = g_0,
\end{eqnarray}
where $R_{g(t)}$ is the scalar curvature of $g(t)$. In  \cite{RY} Roidos proved the existence of short time solutions in the conformal class of the initial metric $g_0$ which he took to be the conically degenerate metric $g$ in \eqref{eq.g} for $n\ge2$. He assumed, moreover, that the scalar curvature of the metric $h(0)$ on $\partial\B$ in \eqref{eq.g} is $n(n-1)$.  

Writing $ g(t) = u^{4/(n-1)}(t) g_0$, \eqref{eq.Y} becomes an initial value problem for  $u$:   
\begin{eqnarray}
\label{eq.CYam}
\partial_tu -nu^{-\frac4{n-1}}\Delta u = -\frac{n-1}4 u^{\frac{n-5}{n-1}}R_{g_0}, \quad u(0) =1.  
\end{eqnarray}
This problem is very similar to that derived for the porous medium equation  in the proof of Theorem \ref{T5.1}, and it can be treated by the same method. Roidos finds: 
\begin{theorem}
Let $n\ge2$, $s\ge0$ and fix  $\gamma$ with  $\min\{-1,q_1^-\} <(n+1)/2-\gamma-2 <0$. Then choose $1<p,q<\infty$ such that 
 $$\frac{n+1}p+ \frac2q<1 \quad\text{and} \quad \frac{n+1}2-\gamma-2+\frac2q<0.$$
 Consider the extension $\underline\Delta$  of $\Delta$ in  $\cH^{s,\gamma}_p(\B)$ with domain  \eqref{Dom}. 
Then there exists a unique solution $u$ to \eqref{eq.CYam} in 
$$W^1_q(0,T; \cH^{s,\gamma}_p(\B)) \cap L^q(0,T; \cH^{s+2,\gamma+2}_p(\B) \oplus \underline\scrE_0).$$ 
\end{theorem}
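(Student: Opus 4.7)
The plan is to recast \eqref{eq.CYam} as a quasi-linear abstract Cauchy problem and then apply the Clément-Li theorem, proceeding in close analogy to the proof of Theorem \ref{T5.1}. With
\[
\X_0 = \cH^{s,\gamma}_p(\B), \quad \X_1 = \scrD(\underline\Delta) = \cH^{s+2,\gamma+2}_p(\B) \oplus \underline\scrE_0,
\]
the equation takes the form $\partial_t u + A(u)u = f(t,u)$, where $A(v) = -n v^{-4/(n-1)} \underline\Delta$ and $f(t,v) = -\tfrac{n-1}{4}\, v^{(n-5)/(n-1)} R_{g_0}$. Since the initial value $u_0 \equiv 1$ is strictly positive, I would choose the open neighborhood
\[
U = \bigl\{ v \in (\X_0,\X_1)_{1-1/q,q} : \tfrac12 \le v \le 2 \text{ on } \B\bigr\},
\]
which is well defined because the interpolation space embeds continuously into $C(\B)$ under the present assumptions.

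The maximal regularity of the reference operator $A(u_0) = -n\underline\Delta$ is the content of Theorem \ref{T4.8}. Indeed, the choice of $\gamma$ places us in the setting of Example \ref{3.10} with $\ell = 0$, so there exists $c > 0$ such that $c - \underline\Delta$ admits a bounded $H_\infty$-calculus on $\cH^{s,\gamma}_p(\B)$ of any angle $\theta > 0$. Combining this with Theorem \ref{T4.2} and Remark \ref{R4.3}, the operator $-n\underline\Delta$ has maximal $L^q$-regularity in $\X_0$.

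To verify hypotheses (H1) and (H2), I would exploit Lemma \ref{interpolation} and the two smallness conditions $(n+1)/p + 2/q < 1$ and $(n+1)/2-\gamma-2+2/q < 0$ to embed the interpolation space into
\[
\cA = \cH^{s+2-2/q-\varepsilon,\, \gamma+2-2/q-\varepsilon}_p(\B) \oplus \underline\scrE_0
\]
for small $\varepsilon>0$, whose Sobolev index exceeds $(n+1)/p$ and whose weight exceeds $(n+1)/2$. By Corollary \ref{algebra} and Lemma \ref{si}, $\cA$ is a unital Banach algebra, spectrally invariant in $C(\B)$ and closed under holomorphic functional calculus; hence for $v \in U$ both powers $v^{-4/(n-1)}$ and $v^{(n-5)/(n-1)}$ lie in $\cA$ and depend locally Lipschitz on $v$. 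By Lemma \ref{multiplier} they act as bounded multipliers on $\X_0$, which yields (H1). The scalar curvature $R_{g_0}$ of the conic metric $g = dx^2 + x^2 h(x)$ would ordinarily have a $(R_{h(0)} - n(n-1))/x^2$-type singularity at the tip, but the normalization $R_{h(0)} = n(n-1)$ eliminates precisely this term, so that $R_{g_0} \in \cA \subset \scrL(\X_0)$; multiplication by $v^{(n-5)/(n-1)} R_{g_0}$ therefore delivers a Lipschitz map $f: [0,T_0] \times U \to \X_0$, giving (H2).

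The main obstacle, exactly as in the porous medium equation, is to transfer the maximal regularity of $A(u_0)$ to $A(v)$ for all $v$ in a neighborhood of $u_0$. This requires adapting the freezing-of-coefficients perturbation argument of \cite[Theorem 6.1]{RS2}, showing that $\cR$-sectoriality (of arbitrary angle $\theta > 0$) persists under multiplication of $-n\underline\Delta$ by a strictly positive $m \in \cA$ close to the constant function $n$. Once this is in place, Theorem \ref{ThmCL} applies with $u_0 \equiv 1 \in U$ and produces, for some $T \in (0,T_0]$, a unique solution $u \in W^1_q(0,T; \X_0) \cap L^q(0,T; \X_1)$ of \eqref{eq.CYam}, which is the asserted regularity.
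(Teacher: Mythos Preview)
Your overall strategy matches the paper's: rewrite \eqref{eq.CYam} as a quasilinear problem, invoke Theorem \ref{T4.8} for maximal regularity of $-n\underline\Delta$, and verify (H1)--(H2) via the algebra properties of $\cA$ and the perturbation argument of \cite[Theorem~6.1]{RS2}. That part is fine.

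The gap is in your treatment of $R_{g_0}$. You assert that the normalization $R_{h(0)}=n(n-1)$ ``eliminates precisely this term, so that $R_{g_0}\in\cA$''. It does not. The normalization kills only the leading $x^{-2}$ singularity; what remains is
\[
xR_{g_0}=\frac1x\bigl(R_{h(0)}-n(n-1)\bigr)+\text{terms smooth in }x\text{ for }x\ge0,
\]
so $R_{g_0}$ still blows up like $x^{-1}$ at the tip and is \emph{not} in $\cA$ (which embeds into $C(\B)$). This is exactly the subtlety the paper singles out. What one actually needs is $R_{g_0}\in\cH^{s,\gamma}_p(\B)=\X_0$, and by Lemma~\ref{2.8} this holds precisely when $1<(n+1)/2-\gamma$, i.e.\ $(n+1)/2-\gamma-2>-1$. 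That is the reason for the strengthened lower bound on $(n+1)/2-\gamma-2$ in the hypothesis (the appearance of $-1$ rather than $-2$ as in Theorem~\ref{T5.1}); you never use this condition, and without it the argument fails. Once $R_{g_0}\in\X_0$ is established, (H2) follows because $v\mapsto v^{(n-5)/(n-1)}$ is locally Lipschitz from $U$ into $\cA$ by Lemma~\ref{si}, and elements of $\cA$ act as bounded multipliers on $\X_0$ by Lemma~\ref{multiplier}.
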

This then furnishes a solution to the Yamabe problem \eqref{eq.Y}. 

There is a subtlety in the proof: The Cl\'ement-Li theorem requires the Lipschitz continuity of the function 
$$F(u) = -\frac{n-1}4 u^{\frac{n-5}{n-1}}R_{g_0} $$
as a map from a neighborhood of $u_0=1$ 
to $\cH^{s,\gamma}_p(\B)$. The scalar curvature satisfies 
$$xR_{g_0}= \frac1x (R_{h_0}-n(n-1) ) + \ \text{terms smooth in $x$ for $x\ge0$}.$$ 
In view of the assumption that $R_{h(0)} = n(n-1)$,  $R_{g_0}$ only blows up like $x^{-1}$ as $x\to0$. The  condition on $\gamma$, which is stronger than  in Theorem \ref{T5.1}, implies that $F$ maps into $\cH^{s,\gamma}_p(\B)$ and Lipschitz continuity follows from Lemmas \ref{si} and \ref{multiplier}.
\medskip


\noindent{\bf The fractional porous medium equation.} Replacing $-\underline\Delta$ in Theorem \ref{T5.1} by its power $(-\underline\Delta)^\sigma$, $0<\sigma<1$, yields  the fractional porous medium equation 
\begin{eqnarray}
\label{FPME}
\partial_t  u + (-\underline\Delta)^\sigma (u^m)=0,
\end{eqnarray}
studied by Roidos and Shao in \cite{RSh1} and \cite{RSh2}. 
Following Tanabe \cite[Section 2.3.2]{Tanabe79}  they define $(-\underline\Delta)_s^\sigma$ as an unbounded closed operator in $\cH^{s,\gamma}_p(\B)$ with the domain 
$$\scrD((-\underline\Delta)_s^\sigma) = \cH^{s+2\sigma, \gamma+2\sigma}_p\oplus \underline\scrE_0,$$
where  $\gamma$ is as in \eqref{eq.5.1} with $(n+1)/2-\gamma-2\sigma \not= q_j^\pm$ for all $j$.
This requires some care. In order to be able to apply the construction, they show that \eqref{sectorial} holds for
$-\underline\Delta$, also without the shift by a positive constant, since $\lambda=0$ is a simple pole of the resolvent. To this end they use a comparison with the Friedrichs extension $\underline\Delta_F$. 
\cite[Theorem 1.3]{RSh1} states:  

\begin{theorem}\label{TRS.1}
Fix $\gamma$ with $2\sigma>(n+1)/2-\gamma$, and  $1<p,q<\infty$ so large that 
$$\frac{n+1}p+\frac{2\sigma}q<2\sigma\quad \text{and} \quad \frac{n+1}2-\gamma-2\sigma +\frac{2\sigma}q<0.$$
Then there exists a $T>0$ such that the initial value problem for 
\eqref{FPME}, with  $u(0) = u_0\in  (\cH^{0, \gamma}_p(\B) ,\cH^{2\sigma, \gamma+2\sigma}_p(\B) \oplus\underline\scrE_0)_{1-1/q,q}$  strictly positive, has a unique solution 
\begin{eqnarray}
\label{eq.RS.2}
u\in W^1_q(0,T; \cH^{s,\gamma}_p(\B))\cap L^q(0,T; \cH^{s+2\sigma,\gamma+2\sigma}_p(\B) \oplus \underline\scrE_0) 
\end{eqnarray} 
for $s=0$. If $u_0\in \cup_{\varepsilon>0} \cH^{\nu+2+(n+1)/p+\varepsilon,\max\{\gamma+2,(n+3)/2\}+\varepsilon}_p(\B) \oplus \underline\scrE_0$, for some $\varepsilon>0$, $\nu\ge0$, then  \eqref{eq.RS.2} holds  for $s=\nu$. 
\!Moreover, $u\in \!C^\infty({]0,T[}, \cH^{2\sigma, \gamma+2\sigma}_p(\B))$.
\end{theorem}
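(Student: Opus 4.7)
The plan is to reduce the fractional porous medium equation to a quasilinear evolution problem amenable to the Cl\'ement--Li theorem (Theorem \ref{ThmCL}), paralleling the strategy of Theorem \ref{T5.1} with $(-\underline\Delta)^\sigma$ in place of $-\underline\Delta$. First I would pass to the unknown $v = u^m$, which turns \eqref{FPME} into the quasilinear problem
\[
\partial_t v + A(v)v = 0,\quad v(0)=v_0=u_0^m, \qquad A(v)=m\, v^{(m-1)/m}(-\underline\Delta)^\sigma,
\]
in $\X_0 = \cH^{s,\gamma}_p(\B)$ with domain $\X_1 = \cH^{s+2\sigma,\gamma+2\sigma}_p(\B)\oplus \underline\scrE_0$. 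Strict positivity of $u_0$ is inherited by $v_0$, and the algebra / spectral invariance property of the space $\cA=\cH^{s_0,\gamma_0}_p(\B)\oplus\underline\scrE_0$, $s_0>(n+1)/p$, $\gamma_0\ge (n+1)/2$ (Lemma \ref{si}), together with Lemma \ref{interpolation}, guarantees that $v_0$ and $v_0^{(m-1)/m}$ lie in an appropriate such $\cA$ arising from the interpolation space $(\X_0,\X_1)_{1-1/q,q}$; this is precisely where the smallness conditions on $1/p$ and $1/q$ enter.

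The main analytic task is maximal $L^q$-regularity of $A(v_0)$. Theorem \ref{T4.6} yields a bounded $H_\infty$-calculus of any angle $\theta>0$ for $c-\underline\Delta$, and by the comparison with the Friedrichs extension $\underline\Delta_F$ indicated in the text, the resolvent estimate \eqref{sectorial} persists for $-\underline\Delta$ itself, since $\lambda=0$ is at most a simple pole of the resolvent. Tanabe's functional calculus then produces $(-\underline\Delta)^\sigma$ with the claimed domain $\X_1$, and standard transfer of $\cR$-sectoriality under fractional powers (the angle scales from $\theta$ to $\sigma\theta$) yields $\cR$-sectoriality of $(-\underline\Delta)^\sigma$ of any angle $>0$. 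Since $m v_0^{(m-1)/m}$ is a strictly positive bounded multiplier on $\X_0$ (Lemma \ref{multiplier}), a freezing-of-coefficients perturbation argument as in \cite[Theorem 6.1]{RS2}, adapted to the fractional order, then shows that $A(v_0)$ is $\cR$-sectorial of some angle less than $\pi/2$; maximal regularity follows from Theorem \ref{T4.2}.

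The Lipschitz hypotheses (H1)--(H2) of Theorem \ref{ThmCL} reduce to Lipschitz continuity of $v\mapsto v^{(m-1)/m}$ from a small neighborhood $U$ of $v_0$ (bounded below away from $0$) in $(\X_0,\X_1)_{1-1/q,q}$ into the bounded multipliers on $\X_0$; this follows from the holomorphic functional calculus in $\cA$ (Lemma \ref{si}) combined with Lemmas \ref{multiplier} and \ref{interpolation}. Applying Theorem \ref{ThmCL} yields a unique $v\in W^1_q(0,T;\X_0)\cap L^q(0,T;\X_1)$; reversing the substitution by $u=v^{1/m}$, again via holomorphic functional calculus on the strictly positive $v$, produces the claimed solution in \eqref{eq.RS.2}. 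The higher regularity claim for $s=\nu>0$ follows by rerunning the same construction on the base space $\cH^{\nu,\gamma}_p(\B)$, where the additional Sobolev and weight regularity of $u_0$ furnishes the required algebra and multiplier properties of $v_0^{(m-1)/m}$, while $u\in C^\infty({]0,T[},\cH^{2\sigma,\gamma+2\sigma}_p(\B))$ follows from the analyticity of the semigroup generated by $-A(v_0)$ and a standard bootstrap in which one differentiates the equation in time.

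The principal obstacle I anticipate is making Tanabe's fractional power construction deliver exactly the stated domain $\cH^{s+2\sigma,\gamma+2\sigma}_p(\B)\oplus \underline\scrE_0$ together with $\cR$-sectoriality of sufficiently small angle: handling the simple pole at $\lambda=0$ of the resolvent of $-\underline\Delta$ requires care in the contour defining the fractional power, with the comparison with $\underline\Delta_F$ as the essential technical input.
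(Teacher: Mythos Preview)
The paper does not contain a proof of this theorem; it is quoted from \cite[Theorem 1.3]{RSh1}. The only argumentative content the paper supplies is the paragraph preceding the statement, which sketches how $(-\underline\Delta)_s^\sigma$ is constructed via Tanabe's fractional power calculus, using a comparison with the Friedrichs extension $\underline\Delta_F$ to handle the simple pole of the resolvent at $\lambda=0$. Your sketch reproduces that setup correctly, but beyond it there is nothing in the paper to compare against.

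On the substance of your outline: the substitution $v=u^m$ is a natural way to bypass the lack of a Leibniz rule for $(-\Delta)^\sigma$, and the overall scheme --- bounded $H_\infty$-calculus for $c-\underline\Delta$, transfer to $\cR$-sectoriality of $(-\underline\Delta)^\sigma$, multiplicative perturbation by $m\,v_0^{(m-1)/m}$, then Cl\'ement--Li --- is coherent. Two points deserve more than you give them. First, the perturbation step for $b(\cdot)(-\underline\Delta)^\sigma$ with variable $b$ and nonlocal $(-\underline\Delta)^\sigma$ is not a routine adaptation of the freezing argument in \cite{RS2}; controlling the commutators $[b,(-\underline\Delta)^\sigma]$ in the cone Sobolev scale is where the serious analysis sits. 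Second, reversing the substitution: you need $u=v^{1/m}$, not just $v$, in $W^1_q(0,T;\X_0)\cap L^q(0,T;\X_1)$. Since $\X_0=\cH^{0,\gamma}_p(\B)$ is not an algebra, holomorphic functional calculus in $\cA$ alone does not close this; you have to combine $v\in C([0,T];\cA)$ with Lemma~\ref{multiplier} to get $\partial_t u=(1/m)v^{(1-m)/m}\partial_t v\in L^q(0,T;\X_0)$, and argue separately that $v^{1/m}(t)\in\X_1$ with an $L^q$ bound in $t$.
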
 

In \cite{RSh2} the authors omitted the positivity condition on $u$ and considered the problem
\begin{eqnarray}\label{5.15}
\partial_t u + (-\Delta)^\sigma (|u|^{m-1}u)=0, \quad u(0) = u_0,
\end{eqnarray}
with $m>0$ and $0<\sigma \le 1$. Their result then is, see \cite[Theorems 7.2, 7.3, 8.2]{RSh2}:

\begin{theorem}\label{TRS.2}
{\rm (a)} For $m\not=1$, $u_0\in L^\infty(\B)$ and any $T>0$,  \eqref{5.15}  has a unique strong solution. 

\noindent{\rm (b)} For $m=1$, $1<p<\infty$, $u_0\in L^p(\B)$ and any $T>0$, \eqref{5.15} has a unique solution 
$$u\in   C^1({[0,T[};L^p(\B))\cap C^0({[0,T[}; \scrD((-\underline\Delta_{F,p})^\sigma)).$$
When $u_0\in L^\infty(\B)$, this solution is strong and depends continuously on $u_0$ in the norm of  $C({[0,T[};L^1(\B))$.

\noindent{\rm (c)} Also, one has a   comparison principle, $L^p$ contractivity, and conservation of mass.  
\end{theorem}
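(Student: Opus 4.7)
The plan is to deduce the theorem from the abstract theory of nonlinear $m$-accretive operators on $L^1$, after first establishing that the fractional power $(-\underline{\Delta}_F)^\sigma$ generates a sub-Markovian semigroup on the whole $L^p$-scale. The starting observation is that the Friedrichs extension $\underline{\Delta}_F$ of $\Delta$ is the non-positive self-adjoint operator associated with the Dirichlet form $\mathcal{E}(u,v)=\int_\B \skp{\nabla u,\nabla v}_g\,d\mu_g$ on $L^2(\B,\mu_g)$. By the Beurling--Deny criteria, $e^{t\underline{\Delta}_F}$ is positivity preserving and $L^\infty$-contractive, hence extends to a strongly continuous contraction semigroup on every $L^p(\B)$ with $1\le p<\infty$. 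The Bochner subordination formula then yields a sub-Markovian contraction semigroup $T_\sigma(t)$, analytic in $\re t>0$, whose generator on $L^p(\B)$ coincides with $-(-\underline{\Delta}_{F,p})^\sigma$ in the sense of Tanabe recalled before Theorem \ref{TRS.1}.

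\textbf{Part (b), the case $m=1$.} The problem reduces to $\partial_t u+(-\underline{\Delta}_{F,p})^\sigma u=0$, whose unique solution for $u_0\in L^p(\B)$ is $u(t)=T_\sigma(t)u_0$. The analyticity of $T_\sigma$ and the standard regularity theory for analytic semigroups deliver $u\in C^1([0,T[;L^p(\B))\cap C^0([0,T[;\scrD((-\underline{\Delta}_{F,p})^\sigma))$, and the $L^1$-contractivity of $T_\sigma(t)$ yields continuous dependence in $C([0,T[;L^1(\B))$ whenever $u_0\in L^\infty(\B)$.

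\textbf{Part (a), the case $m\ne 1$.} Setting $\phi(s)=|s|^{m-1}s$, a continuous odd strictly increasing map with $\phi(0)=0$, I introduce
\begin{equation*}
Au=(-\underline{\Delta}_F)^\sigma\phi(u),\qquad \scrD(A)=\{u\in L^1(\B):\phi(u)\in\scrD((-\underline{\Delta}_{F,1})^\sigma)\}.
\end{equation*}
The main step is the theorem of B\'enilan--Crandall on regularizing effects of homogeneous evolutions: whenever $B$ generates a sub-Markovian semigroup on a consistent $L^p$-scale and $\phi$ is as above, the composition $u\mapsto B\phi(u)$ is $m$-accretive on $L^1$ with dense domain. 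Applied to $B=(-\underline{\Delta}_F)^\sigma$, the Crandall--Liggett exponential formula then produces a contraction semigroup $S(t)$ on $L^1(\B)$; for $u_0\in L^\infty(\B)$, the boundedness of $S(t)u_0$, inherited from the comparison principle against constant sub- and super-solutions, together with the regularizing effect, upgrades $t\mapsto S(t)u_0$ to a strong solution of (5.15). Uniqueness is automatic from $m$-accretivity.

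\textbf{Part (c) and the main obstacle.} The comparison principle and $L^p$-contractivity are consequences of the $T$-accretivity of $A$, which reflects the positivity preservation of $T_\sigma$ and the monotonicity of $\phi$; conservation of mass follows by testing the resolvent equation against the constant function $1$, using $(-\underline{\Delta}_F)^\sigma 1=0$ in the duality sense between $L^1$ and $L^\infty$. The delicate point of the whole scheme is the $m$-accretivity of $A$ itself, namely the surjectivity of $I+\lambda A$ for every $\lambda>0$ together with the $L^1$-contraction estimate for its resolvent. This reduces to the sub-Markovian property from the first step combined with a precise description of the domain of $(-\underline{\Delta}_{F,p})^\sigma$ on the conic manifold $\B$, which is exactly what the analysis behind Theorem \ref{TRS.1} and \cite{SS2} supplies.
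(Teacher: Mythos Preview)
The paper does not actually prove this theorem: it is stated with the reference ``see \cite[Theorems 7.2, 7.3, 8.2]{RSh2}'' and the only methodological comment is that ``Theorem \ref{TRS.2} requires different methods. The approach is based on the Markovian properties of $\underline\Delta_F$.'' Your proposal reconstructs precisely this route---Beurling--Deny for the Friedrichs form, Bochner subordination to pass to $(-\underline\Delta_F)^\sigma$, then the B\'enilan--Crandall / Crandall--Liggett machinery for $m$-accretive operators of the form $B\circ\phi$---which is indeed the strategy of \cite{RSh2}. So as far as comparison with the paper goes, you are on target.

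Two small cautions on the write-up itself. First, in part (b) your appeal to analyticity gives $u\in C^1({]0,T[};L^p)\cap C^0({[0,T[};L^p)$ for arbitrary $u_0\in L^p$, not differentiability down to $t=0$; regularity at the initial time needs $u_0$ in the domain, so be careful how you phrase that step. Second, in your final paragraph you invoke ``a precise description of the domain of $(-\underline\Delta_{F,p})^\sigma$'' and point to Theorem \ref{TRS.1} and \cite{SS2}. Those results concern the extension $\underline\Delta$ in weighted cone Sobolev spaces, not the Friedrichs realization on $L^1$; the $m$-accretivity of $A=(-\underline\Delta_F)^\sigma\circ\phi$ does not actually require such a domain description but follows from the sub-Markov property together with the Brezis--Strauss/B\'enilan argument. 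Your scheme is right, but the justification you cite for the ``delicate point'' is not the relevant one.
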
 

In Theorem \ref{TRS.2}(b), $\underline\Delta_{F,p}$ denotes the extension of $\Delta$ in $L^p(\B)$ induced by the Friedrichs extension $\underline \Delta_F$ in $L^2(\B)$. Theorem \ref{TRS.2}  requires different methods. The approach  is based on the Markovian properties of $\underline\Delta_F$.  

\section{Example: The Cahn-Hilliard Equation} 

The Cahn-Hilliard equation has been considered in \cite{RS4} and \cite{RS1}. 
Simplifying the arguments given there, we start by  rewriting the problem in the form 
\begin{eqnarray}\label{F} 
\partial_t u +\Delta^2u = F(u),  \quad F(u) =-\Delta(u-u^3).
\end{eqnarray}

\subsection{Short time solutions}
 
As  in Theorem \ref{T5.1} we fix $s\ge0$, $1<p<\infty$  and $\gamma\in \R$ such that 
\begin{eqnarray}\label{6.3}
\max\{-2,q_1^-\} <({n+1})/2 - \gamma-2<0
\end{eqnarray}
and $(n+1)/2-\gamma\not= q_j^+$ for all $j$. We consider the extension $\underline\Delta$  in $\cH^{s,\gamma}_p(\B)$ with  domain 
\begin{eqnarray}\label{6.4} 
\scrD(\underline\Delta) \!= \cH^{s+2,\gamma+2}_p(\B)\oplus \underline\scrE_0; \; \underline\scrE_0 \!=\!\{u\!: \!u(x,y) \!=\!\omega(x) e(y) ; e\!\in \!E_0\}.
\end{eqnarray}
By Theorem \ref{T4.8}, 
$c-\underline\Delta$ has a bounded $H_\infty$-calculus in $\cH^{s,\gamma}_p(\B)$ for sufficiently large $c>0$ and any positive angle. 
So it also has bounded imaginary powers of any positive angle. 
It follows from \cite[Lemma 3.6]{RS4} that also $\underline \Delta^2$ with its natural domain
\begin{eqnarray}\label{6.5} 
\scrD(\underline\Delta^2)=\{u\in \scrD(\underline \Delta): \Delta u \in \scrD(\underline\Delta)\}
\end{eqnarray}
has bounded imaginary powers of any positive angle after a shift by a positive constant. This is essentially a consequence of the fact that $(A^2)^{is} = A^{2is}$, $s\in \R$.
So  $\underline\Delta^2$ has maximal regularity. By \cite[Theorems 4.3 and 4.5 ]{RS4} and \cite[Theorem 5.9]{RS1} we obtain:

\begin{theorem}\label{T6.1} Let $s\ge0$, $p\ge n+1$, $q>2$,  $\gamma$ as in \eqref{6.3}, assuming additionally that $(n+1)/2-\gamma-4\not=q_j^-$ for all $j$. 
For the extension $\underline\Delta$ of the Laplacian in $\cH^{s,\gamma}_p(\B)$ with domain \eqref{6.4} let $\underline\Delta^2$ be the unbounded operator in $\cH^{s,\gamma}_p(\B)$ acting like $\Delta^2$ on the domain $\scrD(\underline\Delta^2)$ in \eqref{6.5}.
Given any  $u_0\in (\cH^{s,\gamma}_p(\B), \scrD(\underline\Delta^2))_{1-1/q,q}$, the initial value problem for the Cahn-Hilliard equation   
\begin{eqnarray*}
\partial_tu +\Delta^2u +\Delta(u-u^3) =0, \quad u(0) = u_0,
\end{eqnarray*}
has a unique solution 
\begin{eqnarray*}
u\in W^1_q(0,T;\cH^{s,\gamma}_p(\B))\cap L^p(0,T; \scrD(\underline\Delta^2)).
\end{eqnarray*}
\end{theorem}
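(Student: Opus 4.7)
The plan is to cast the problem into the form covered by the Cl\'ement--Li theorem (Theorem \ref{ThmCL}) with the \emph{constant} operator $A(u)\equiv \underline\Delta^{2}$ and nonlinearity $f(u)=-\Delta(u-u^{3})$, so that only the Lipschitz condition (H2) needs nontrivial work. Setting $\X_{0}=\cH^{s,\gamma}_{p}(\B)$ and $\X_{1}=\scrD(\underline\Delta^{2})$, the problem reads
\begin{eqnarray*}
\partial_{t}u+\underline\Delta^{2}u=f(u),\qquad u(0)=u_{0},
\end{eqnarray*}
in $L^{q}(0,T;\X_{0})$, with $u_{0}\in(\X_{0},\X_{1})_{1-1/q,q}$.

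Maximal $L^{q}$-regularity of $\underline\Delta^{2}$ is already in hand: by Theorem \ref{T4.8}, $c-\underline\Delta$ admits a bounded $H_{\infty}$-calculus on $\X_{0}$ of any angle $\theta>0$, hence has bounded imaginary powers of any positive angle. The cited result \cite[Lemma 3.6]{RS4} transfers this to $(c-\underline\Delta)^{2}$, and up to a shift to $\underline\Delta^{2}$ with the natural domain \eqref{6.5}; the assumption $(n+1)/2-\gamma-4\neq q_{j}^{-}$ ensures that this iterated domain is well defined. By Theorem \ref{T4.2} and Remark \ref{R4.3} this yields maximal $L^{q}$-regularity for $\underline\Delta^{2}$ in the UMD space $\X_{0}$, so (H1) and (H3) hold trivially and condition (i) of Cl\'ement--Li is satisfied.

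The main work is to show that $f:u\mapsto -\Delta u+\Delta(u^{3})$ is Lipschitz from a neighborhood of $u_{0}$ in $(\X_{0},\X_{1})_{1-1/q,q}$ to $\X_{0}$. First I would prove the natural analog of Lemma \ref{interpolation} for the fourth-order operator $\underline\Delta^{2}$: for every $\varepsilon>0$,
\begin{eqnarray*}
(\X_{0},\X_{1})_{1-1/q,q}\hookrightarrow \cH^{s+4-4/q-\varepsilon,\gamma+4-4/q-\varepsilon}_{p}(\B)\oplus \underline{\scrE}_{0}
\end{eqnarray*}
(plus further asymptotic components whose structure is irrelevant here). The conditions $p\ge n+1$ and $q>2$ together with \eqref{6.3} imply $s+4-4/q-\varepsilon>(n+1)/p$ and $\gamma+4-4/q-\varepsilon>(n+1)/2$ for sufficiently small $\varepsilon>0$. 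By Corollary \ref{algebra} the right-hand side is a Banach algebra (up to an equivalent norm), so the cubing map $u\mapsto u^{3}$ is locally Lipschitz on it. Applying $\Delta$ costs two orders of smoothness and two units of weight; since $4-4/q>2$, the resulting space embeds continuously into $\cH^{s,\gamma}_{p}(\B)=\X_{0}$ by Theorem \ref{2.10}(a). The linear term $-\Delta u$ is handled the same way (or directly, by boundedness of $\Delta:\scrD(\underline\Delta)\to\X_{0}$ combined with the embedding $(\X_{0},\X_{1})_{1-1/q,q}\hookrightarrow\scrD(\underline\Delta)$). Composing these estimates yields (H2).

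The main obstacle is the Lipschitz bound on $u\mapsto\Delta(u^{3})$: one must simultaneously control the interior Sobolev smoothness, the conormal weight, and the asymptotic component $\underline{\scrE}_{0}$ of the interpolation space, and verify that the cubing map respects the Banach-algebra structure built out of $\cH^{s,\gamma}_{p}$ plus a finite-dimensional asymptotic summand. This is exactly the point at which the stated parameter conditions $p\ge n+1$, $q>2$ and the weight bound \eqref{6.3} are used. Once (H2) is established, Theorem \ref{ThmCL} delivers a unique local solution $u\in W^{1}_{q}(0,T;\X_{0})\cap L^{q}(0,T;\X_{1})$.
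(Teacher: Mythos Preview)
Your overall strategy coincides with the paper's: apply Cl\'ement--Li with the constant operator $\underline\Delta^{2}$, obtain maximal regularity from bounded imaginary powers, and verify (H2) for $F(u)=-\Delta(u-u^{3})$. The maximal regularity part is essentially identical.

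The gap is in your treatment of the interpolation space. You claim that $(\X_{0},\X_{1})_{1-1/q,q}$ embeds into $\cH^{s+4-4/q-\varepsilon,\gamma+4-4/q-\varepsilon}_{p}(\B)\oplus\underline\scrE_{0}$ ``plus further asymptotic components whose structure is irrelevant here,'' and then invoke the Banach algebra property of the right-hand side to control $u^{3}$. But those additional asymptotic components are \emph{not} absorbed by the high-weight space and are \emph{not} irrelevant. The domain $\scrD(\underline\Delta^{2})$ contains asymptotic spaces $\scrE_{\rho}$ for poles $\rho$ of $\sigma_{M}(\Delta^{2})^{-1}$ in $J_{\gamma}'=\,](n+1)/2-\gamma-4,(n+1)/2-\gamma-2[\,$, and by Lemma~\ref{2.8} a function $\omega(x)x^{-\rho}e(y)$ with $\rho$ near the right end of this interval lies in $\cH^{\infty,\gamma'}_{p}$ only for $\gamma'$ close to $\gamma+2$, not for $\gamma'=\gamma+4-4/q-\varepsilon>\gamma+2$. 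So your ``Banach algebra plus $\Delta$'' step is applied to a space that does not actually contain the interpolation space, and the cross terms between the Sobolev part and the genuine asymptotics in $u^{3}$ are unaccounted for.

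The paper's fix is exactly to take the asymptotics seriously: it computes $\sigma_{M}(\Delta^{2})(z)=\sigma_{M}(\Delta)(z+2)\sigma_{M}(\Delta)(z)$, identifies the $\scrE_{\rho}$ appearing in $\scrD(\underline\Delta^{2})$, and then uses Lemma~\ref{2.8} to see that every such $\scrE_{\rho}$ with $\rho\in J_{\gamma}'$ is already contained in $\cH^{s+2,\gamma+2}_{p}(\B)$. Hence the whole interpolation space embeds into $\scrD(\underline\Delta)=\cH^{s+2,\gamma+2}_{p}(\B)\oplus\underline\scrE_{0}$. This space \emph{is} a Banach algebra (here is where $s+2>(n+1)/p$ and $\gamma+2>(n+1)/2$ are used), and $\Delta$ maps it boundedly into $\X_{0}$. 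The Lipschitz bound then follows from the identity $(u_{1}-u_{1}^{3})-(u_{2}-u_{2}^{3})=(1-u_{1}^{2}-u_{1}u_{2}-u_{2}^{2})(u_{1}-u_{2})$ inside this algebra. In short: drop to weight $\gamma+2$ rather than $\gamma+4-4/q$, so that all asymptotics are absorbed and a single application of $\Delta$ lands in $\X_{0}$.
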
 

Idea of the {\em proof}. Let us first see  what $\scrD(\underline\Delta^2)$  is. 
Since $(n+1)/2-\gamma-4\not=q_j^-$ for all $j$, we know from Theorem \ref{T3.4} that  
$$\scrD(\Delta^2_{\min}) = \cH^{s+4,\gamma+4}_p(\B).$$

Next let us determine the maximal domain of $\Delta^2$ (not $\underline\Delta^2$), considered as an unbounded operator in $\cH^{s,\gamma}_p(\B)$.
According to \cite[(2.13)]{SS1} the principal Mellin symbol $\sigma_M(\Delta^2)$ of $\Delta^2$ satisfies 
\begin{eqnarray*}
\sigma_M(\Delta^2)(z) = \sigma_M(\Delta)(z+2)\sigma_M(\Delta)(z). 
\end{eqnarray*}
From \eqref{inverse} we obtain that
\begin{eqnarray*}
\sigma_M(\Delta^2)^{-1}(z)
&=&\sum_{j=0}^\infty\frac{1}{(z-q_j^+)(z-q_j^-)(z+2-q_j^+)(z+2-q_j^-)}\pi_j,
\end{eqnarray*}
which allows us to read off the singularities including their multiplicities. Hence 
\begin{eqnarray*}
\scrD(\Delta^2_{\max}) = \cH^{s+4,\gamma+4}_p(\B) \oplus \bigoplus_{\rho \in J_\gamma} \scrE_\rho,
\end{eqnarray*}
where the direct sum is over all the asymptotics spaces associated with the singularities of $\sigma_M(\Delta^2)^{-1}$ in the interval
\begin{eqnarray*}
J_\gamma ={ \left]\frac{n+1}2-\gamma-4, \frac{n+1}2-\gamma\right[}.
\end{eqnarray*}
Since we defined the domain of $\underline\Delta^2$ by \eqref{6.5} we have to exclude the asymptotics spaces not contained in $\scrD(\underline\Delta)$ and conclude that 
\begin{eqnarray*}
\scrD(\underline\Delta^2) = \cH^{s+4,\gamma+4}_p(\B) \oplus \bigoplus_{J_\gamma'} \scrE_\rho,\oplus \,\underline\scrE_0,
\end{eqnarray*}
where $J_\gamma' = J_\gamma\setminus {[(n+1)/2-\gamma-2,(n+1)/2-\gamma[}$.


Next let us have a look at the interpolation space 
$
(\cH^{s,\gamma}_p(\B), \scrD(\underline\Delta^2))_{1-1/q,q}.
$ 
Since $q>2$ by assumption, (an analog of) Lemma \ref{interpolation} shows that it embeds into
\begin{eqnarray*}
\lefteqn
{\bigcap_{\varepsilon >0}\cH^{s+4-4/q-\varepsilon,\gamma+4-4/q-\varepsilon }_p(\B) \oplus \bigoplus_\rho \scrE_\rho \oplus  \underline\scrE_0}\\
&&\hookrightarrow \cH^{s+2+\delta,\gamma+2+\delta}(\B) \oplus \bigoplus_\rho \scrE_\rho \oplus  \underline\scrE_0
\end{eqnarray*}
for  some $\delta>0$, where now the direct sum runs only over those $\scrE_\rho$ for which 
$$\rho \in \left[\frac{n+1}2-\gamma-4+\frac4q, \frac{n+1}2-\gamma -2\right[.$$
From Lemma \ref{2.8} we see that all these asymptotics spaces are contained in 
$\cH^{s+2,\gamma+2}_p(\B)$. As a result, the interpolation space embeds into $\cH^{s+2,\gamma+2}_p(\B)\oplus \underline \scrE_0$.  
We moreover observe that, by assumption $s+2>(n+1)/p$ and, by \eqref{6.3},  $\gamma+2>(n+1)/2 $.
Hence $\cH^{s+2,\gamma+2}_p(\B)\oplus \underline\scrE_0$ is a Banach algebra up to an equivalent norm by  Corollary \ref{algebra}.

Now for the existence of the solution.   The Cl\'ement-Li Theorem will imply the assertion of the theorem, provided we show that   the map $F$ defined in \eqref{F} is Lipschitz continuous  from a neighborhood $U$ of $u_0$ to $\cH^{s,\gamma}_p(\B)$. 

Since $\Delta:\cH^{s+2,\gamma+2}_p(\B)\oplus \underline\scrE_0 \to \cH^{s,\gamma}_p(\B)$ is bounded, we only have to show the Lipschitz continuity of 
\begin{eqnarray*}
u-u^3: \cH^{s+2,\gamma+2}(\B) \oplus\underline\scrE_0 \to \cH^{s+2,\gamma+2}_p(\B)\oplus \underline{\scrE}_0.
\end{eqnarray*}
So let $u_1,u_2\in \cH^{s+2,\gamma+2}(\B) \oplus  \underline\scrE_0$. Write 
\begin{eqnarray*}
(u_1-u_1^3) - (u_2-u_2^3) = (1-u_1^2-u_1u_2-u_2^2)(u_1-u_2). 
\end{eqnarray*}
Since $\cA := \cH^{s+2,\gamma+2}_p(\B)\oplus \underline\scrE_0 $ is a Banach algebra, we can estimate 
\begin{eqnarray*}
\lefteqn{\|(u_1-u_1^3) - (u_2-u_2^3) \|_{\cA}}\\
 &\le& C(1+\|u_1\|_{\cA}^2+\|u_1\|_\cA \|u_2\|_\cA + \|u_2\|^2_\cA ) \|u_1-u_2\|_\cA. 
\end{eqnarray*}
This concludes the argument. \hfill $\Box$

\subsection{Global Solutions and Attractors}
In \cite{LR1}  Lopes and Roidos showed:

\begin{theorem}\label{LoRo1}
Let $1<p<\infty$, $q\ge2$ for $p=2$ and $q>2$ for $p\not=2$, $s\ge0$, $s+2\ge(n+1)/p$. Choose $\scrD(\underline\Delta^2)$  as above with $\gamma$ as in \eqref{6.3}. Then the solution obtained in Theorem \ref{T6.1} belongs to $ C^\infty({]0,T[},\scrD(\underline{\Delta}_\sigma^2),$ for all $\sigma$, where $\scrD(\underline{\Delta}_\sigma^2)$ is the corresponding domain for $\underline{\Delta}^2$ in $\cH^{\sigma,\gamma}_p(\B)$.  

If $n=1$ and $\gamma<-1/2$ or $n=2$ and $\gamma<-1/4$, then the solution exists for all times. 
\end{theorem}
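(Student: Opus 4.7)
The statement has two independent parts: a parabolic smoothing property for the short-time solution of Theorem \ref{T6.1}, and a global existence assertion in low dimensions under sharper weight restrictions. I would address them in turn.

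For the smoothing, the key input is that $-\underline{\Delta}^2$, shifted by a suitable positive constant, admits a bounded $H_\infty$-calculus of any positive angle on every $\cH^{\sigma,\gamma}_p(\B)$ with the iterated domain $\scrD(\underline{\Delta}^2_\sigma)$, provided the non-resonance conditions on $\gamma$ persist as $\sigma$ increases. Given the solution $u$ from Theorem \ref{T6.1} and any $\tau\in\,]0,T[$, an analog of Lemma \ref{interpolation} for $\underline{\Delta}^2$ (of the form used inside the proof of Theorem \ref{T6.1}) shows that $u(\tau)$ lies in the Banach algebra $\cH^{s+2,\gamma+2}_p(\B)\oplus \underline{\scrE}_0$, which in turn embeds into the Cl\'ement-Li interpolation space at spatial regularity $s+1$. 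Restarting the Cl\'ement-Li construction at time $\tau$ in $\cH^{s+1,\gamma}_p(\B)$ and gluing by uniqueness raises the spatial regularity of $u$ on $[\tau,T]$ by one unit; iterating gives $u\in L^q(\tau',T;\scrD(\underline{\Delta}^2_\sigma))$ for all $\sigma\ge 0$ and all $\tau'\in\,]0,\tau[$. Smoothness in $t$ follows from the analyticity of the semigroup generated by $-\underline{\Delta}^2$ together with the polynomial Nemytskii dependence of $\Delta(u-u^3)$ on $u$ in the Banach algebra: differentiating the equation shows that $\partial_t^k u$ satisfies a related linear parabolic problem to which the same bootstrap applies.

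For global existence in $n\in\{1,2\}$ I would exploit that the Cahn-Hilliard equation is the $H^{-1}$ gradient flow of the Ginzburg-Landau energy
\begin{eqnarray*}
E(u) = \int_{\B}\Big(\tfrac12\langle\nabla u,\nabla u\rangle_g + \tfrac14(u^2-1)^2\Big)\,d\mu_g.
\end{eqnarray*}
Testing the equation against $\mu := -\Delta u + u^3 - u$ yields
\begin{eqnarray*}
\frac{d}{dt}E(u(t)) = -\int_\B \|\nabla\mu\|_g^2\,d\mu_g \le 0,
\end{eqnarray*}
and testing against the constant $1$ gives conservation of mass. Together, these furnish an a priori bound on $u$ in the natural energy space for $g$, uniform on the maximal interval of existence $[0,T_{\max})$. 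In dimensions $\dim\B\in\{2,3\}$ the conic Sobolev embedding (Corollary \ref{2.6} and the standard embedding away from the tip) controls $\|u(t)\|_{L^r(\B,\mu_g)}$ for $r$ large enough to close the quartic nonlinearity. The weight thresholds $\gamma<-1/2$ for $n=1$ and $\gamma<-1/4$ for $n=2$ are precisely what is needed for the energy-controlled space to embed into $\cH^{0,\gamma}_p(\B)$; they are read off from the scaling in Theorem \ref{2.10}(a) together with the identification $L^p(\B,\mu_g)=\cH^{0,(n+1)(1/2-1/p)}_p(\B)$. A uniform bound on $\|u(t)\|$ in the Cl\'ement-Li interpolation space then follows -- via a Moser-type iteration, or by combining the energy bound with a maximal regularity estimate on short time windows -- and the standard continuation principle of the Cl\'ement-Li framework forces $T_{\max}=\infty$.

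The principal obstacle I foresee is rigorously justifying the formal energy identity in the conic setting: the integration by parts $\int_\B \mu\cdot\Delta\mu\,d\mu_g = -\int_\B\|\nabla\mu\|_g^2\,d\mu_g$ must be shown to incur no boundary contribution at the tip. Since $u\in\scrD(\underline{\Delta}^2)$ has leading asymptotics in $\underline{\scrE}_0$ (constant in $x$, for which the radial derivative vanishes), one can approximate $u$ by cut-offs in $x$, integrate by parts on the regular region, and use the decay rates of the non-constant asymptotics from Lemma \ref{2.8} to send tip contributions to zero. The dimension restriction $n\in\{1,2\}$ is essential: in higher dimensions the energy-controlled space no longer embeds into a space dominating the Cl\'ement-Li interpolation-space norm, and the continuation argument does not close.
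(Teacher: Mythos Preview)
Your proposal is correct and follows essentially the same route as the paper's sketch. For smoothing you restart the Cl\'ement--Li construction at an interior time with improved initial regularity and iterate, which is exactly what the paper outlines (citing \cite[Theorem~3.1]{RS3}); for global existence you invoke the decay of the Ginzburg--Landau energy $\Phi(u)=\tfrac12\|\nabla u\|_{L^2}^2+\tfrac14\|u^2-1\|_{L^2}^2$ and a continuation argument, which is again the paper's strategy.

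One organisational point worth noting: the paper records an intermediate reduction you do not make explicit, namely \cite[Corollary~4.2]{LR1}, which says that global existence for \emph{one} admissible choice of $(s,p,q)$ implies it for all others. This lets one prove the a~priori bound in the single most convenient setting---$p=2$ and low $s$, where the energy estimate yields directly $\|u\|_{C([0,T];\cH^{1,1+\gamma}_2(\B))}\le C$ uniformly in $T$---and then transfer the conclusion. Your alternative, bridging the energy bound to the Cl\'ement--Li interpolation norm via Moser iteration or short-window maximal regularity, is a legitimate substitute but is more laborious to carry out for general $(s,p,q)$; the reduction step is what makes the argument in \cite{LR1} clean. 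Your discussion of the tip boundary term in the energy identity is apt and is indeed a point that has to be checked in the conic setting.
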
 

That the solutions become instantaneously smooth in time and space follows essentially from the fact that one can take any $t_1\in {]0,T[}$ and find the solution with $u(t_1)$ as initial value. On the one hand, it coincides with the previously found solution, on the other it has better regularity. Then one  applies an iteration argument, see \cite[Theorem 3.1]{RS3}. 
 
 The long time existence is a consequence of two interesting facts. The first is \cite[Corollary 4.2]{LR1}. It shows that, if one has a global solution for one choice of $s,p,q$ as above, then also for any other. The second is an energy estimate, see \cite[Proposition 4.4]{LR1}: For some $C$, depending only on $\|\nabla u_0\|_{L^2(\B, d\mu_g)} $ and $\|u^2_0-1\|_{L^2(\B,d\mu_g)}$,
\begin{eqnarray*}
\|u\|_{ C([0,T]; \cH_2^{1,1+\gamma}(\B))} \le C,
\end{eqnarray*}
independent of $T$. This estimate is derived by studying the energy functional 
$$\Phi(u) = \frac12\|\nabla u\|^2_{L^2(\B,d\mu_g)} + \frac14\|u^2-1\|^2_{L^2(\B,d\mu_g)} .$$

As shown in \cite[Theorem 1]{LR2}, the global solutions in Theorem \ref{LoRo1} have a striking property. Let $s\ge0$, $\gamma$ as in \eqref{6.3}, and let  $X^s_{1,0}$ be the set of all real-valued $u$ in $\cH^{s+2,\gamma+2}_2(\B)\oplus \underline\scrE_0$ with $\int_\B u\,d\mu_g=0$. 

\begin{theorem}{\rm(a)} The semiflow $\T: {[0,\infty[} \times X^s_{1,0}\to X^s_{1,0}$ defined by $\T(s,u_0)=: \T(s)(u_0) = u(s)$  
has an $s$-independent global attractor $\mathbb A\subset \cap_{\sigma>0} \scrD(\underline{\Delta}^2_\sigma)$. If $B\subset \scrD(\underline{\Delta}^2_s)$ is bounded in $X^s_{1,0}$, then, for any $\sigma>0$,   $\T(t)(B)$ is bounded in $\scrD(\underline{\Delta}^2_\sigma)$ for large $t$ and 
\begin{eqnarray*}
\lim_{t\to \infty}(\sup_{b\in B}\inf_{a\in \mathbb A} \|\T(t)b-a\|_{\scrD(\underline\Delta^2_\sigma)} )=0.
\end{eqnarray*}
{\rm(b)} For $u_0\in X_{1,0}^s$ there exists a $u_\infty\in \cap_\sigma\scrD(\underline\Delta^2_\sigma)$ with 
$\T(t)u_0\to u_\infty $ in $\scrD(\underline\Delta^2_\sigma)$ for all $\sigma$. 
\end{theorem}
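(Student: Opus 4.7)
The proof rests on three pillars: global well-posedness and instantaneous smoothing from Theorem \ref{LoRo1}, the Lyapunov structure given by $\Phi$, and the compact embeddings of cone Sobolev spaces (Theorem \ref{2.10}(b)). First I would check that $\T$ is a continuous semiflow on $X^s_{1,0}$. Mass conservation follows by integrating the equation against $d\mu_g$: the right-hand side is $\Delta$ applied to a function in the maximal domain, hence integrates to zero, so the mean-zero subspace is invariant; continuity in $u_0$ is inherited from the Cl\'ement-Li framework and continuity in $t$ from the regularity of the solution. Using the $H^{-1}$-gradient-flow structure of the equation, a formal computation gives $\tfrac{d}{dt}\Phi(u) = -\|\nabla \mu\|_{L^2}^2$ with $\mu = -\Delta u + u^3 - u$, so $\Phi$ strictly decreases off equilibria. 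Combined with the energy estimate quoted before the theorem, this sends any bounded $B\subset X^s_{1,0}$ to a set uniformly bounded in $\cH^{1,1+\gamma}_2(\B)$ for all $t\ge0$.

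To upgrade this into a bounded absorbing set in $\scrD(\underline{\Delta}^2_\sigma)$ for arbitrary $\sigma>0$, I would apply the variation-of-constants formula associated with the bounded $H_\infty$-calculus of $\underline{\Delta}^2$ (Theorem \ref{T4.8}, as in the proof of Theorem \ref{T6.1}): a uniform bound at regularity $\sigma$, combined with the nonlinear estimate
\begin{equation*}
\|\Delta(u-u^3)\|_{\cH^{s,\gamma}_p(\B)} \le C(1+\|u\|_{\cA}^3),
\end{equation*}
which follows from the Banach algebra property of $\cA = \cH^{s+2,\gamma+2}_p(\B)\oplus \underline{\scrE}_0$ in Corollary \ref{algebra}, bootstraps to a uniform bound at regularity $\sigma+1$. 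Iterating produces an absorbing set in each $\scrD(\underline{\Delta}^2_\sigma)$, while the compact embeddings of Theorem \ref{2.10}(b) give asymptotic compactness in every such space. Standard dynamical systems arguments then produce a global attractor $\mathbb A = \bigcap_{t\ge 0}\overline{\bigcup_{\tau\ge t}\T(\tau)\mathcal B}$ for any bounded absorbing set $\mathcal B$, contained in $\bigcap_{\sigma>0}\scrD(\underline{\Delta}^2_\sigma)$ and attracting in every $\scrD(\underline{\Delta}^2_\sigma)$-norm, which yields its $s$-independence.

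For part (b), relative compactness of an individual trajectory together with the strict Lyapunov property of $\Phi$ forces $\omega(u_0)$ to be a non-empty, connected, compact set of equilibria. To collapse $\omega(u_0)$ to a single point $u_\infty$ I would apply a Lojasiewicz-Simon inequality at each equilibrium: $\Phi$ is real-analytic (since $u\mapsto u^3$ is), and the linearization $\Delta^2 + \Delta(1-3u_*^2)$ at any equilibrium $u_*\in\bigcap_\sigma \scrD(\underline{\Delta}^2_\sigma)$ is a Fredholm perturbation of $\Delta^2$ between cone Sobolev spaces by Theorem \ref{2.17} combined with Lemma \ref{multiplier}. This yields convergence in $\cH^{1,1+\gamma}_2(\B)$, and interpolation against the uniform higher-regularity bounds from the bootstrap promotes it to convergence in every $\scrD(\underline{\Delta}^2_\sigma)$.

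The principal obstacle is the Lojasiewicz-Simon step: while the abstract framework is well-understood for analytic gradient systems on Hilbert spaces, verifying its hypotheses in the conic weighted setting requires a careful analysis of the fourth-order linearization under the constraint $\int_{\B} u\,d\mu_g=0$, in particular establishing closed range and Fredholmness between the constrained mean-zero spaces while keeping track of the asymptotics spaces entering $\scrD(\underline{\Delta}^2_\sigma)$. The bootstrap construction of a higher-regularity absorbing set is also delicate, since the Banach algebra property of $\cA$ only begins to apply once the working regularity exceeds the critical Sobolev threshold on $\B$.
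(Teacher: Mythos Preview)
The paper itself does \emph{not} prove this theorem: it is a survey, and this result is simply stated as a consequence of Lopes--Roidos \cite{LR2} (with the ingredients from \cite{LR1} recalled just before). So there is no ``paper's own proof'' against which to compare in detail. What the paper does provide, in the paragraphs preceding the statement, are exactly the building blocks you invoke: instantaneous smoothing into every $\scrD(\underline\Delta^2_\sigma)$, and the uniform energy bound $\|u\|_{C([0,T];\cH^{1,1+\gamma}_2(\B))}\le C$ coming from the Lyapunov functional $\Phi$. Your outline --- semiflow structure, dissipativity via $\Phi$, smoothing-plus-compact-embedding to get an absorbing set in higher norms, then the $\omega$-limit / Lojasiewicz--Simon route for (b) --- is the standard strategy for Cahn--Hilliard type problems and is indeed the approach carried out in \cite{LR1,LR2}.

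Two small corrections. First, when you cite Theorem \ref{T4.8} for the functional calculus of $\underline\Delta^2$, note that \ref{T4.8} treats $c-\underline\Delta$; the passage to $\underline\Delta^2$ (bounded imaginary powers, hence maximal regularity) is done in the text just before Theorem \ref{T6.1} via \cite[Lemma 3.6]{RS4}. Second, your honest caveat about the Lojasiewicz--Simon step is well placed: in \cite{LR2} this is precisely where the work lies, and it is handled there by verifying the analyticity and Fredholm hypotheses of an abstract Lojasiewicz--Simon theorem in the weighted cone setting, not by ad hoc arguments. Your sketch is therefore correct in spirit, but the actual execution in \cite{LR2} is what turns the outline into a proof.
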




\begin{thebibliography}{99}
\bibitem{Amann} H. Amann. {\em Linear and quasilinear parabolic problems, Vol. I Abstract linear theory}. Monographs in Mathematics {\bf89}, Birkh\"auser Verlag 1995.
%
\bibitem{Am3} H. Amann. {\em Dynamic theory of quasilinear parabolic systems}. Math. Z. {\bf 202}, 219--250 (1989).
%
\bibitem{AP81}
D. Aronson, L. Peletier. {\em Large time behaviour of solutions of the porous medium
equation in bounded domains.}  J. Differential  Equations {\bf 39(3)}, 378--412 (1981).
%

\bibitem{BV} E. Bahuaud, B. Vertman. {\em Long-time existence of the edge Yamabe flow.} J. Math. Soc. Japan {\bf 71}, no. 2, 651--688 (2019).


\bibitem{BBGM24} E. Berchio, M. Bonforte, G. Grillo, M. Muratori. {\em The fractional porous medium equation on noncompact Riemannian manifolds.} Math. Ann. {\bf 389}, no. 4, 3603--3651 (2024). 


\bibitem{BS88} J. Br\"uning, R. Seeley. {\em An index theorem for first order regular singular operators}. Amer. J. Math. {\bf 110}, no. 4, 659--714 (1988).

\bibitem{CL0}
H. Chen and G. Liu. {\em Global existence and nonexistence for semilinear parabolic equations with conical degeneration.}
 J. Pseudo-Differ. Oper. Appl.  {\bf3},   no. 3, 329--349 (2012).

\bibitem{CL1}
H. Chen, N. Liu. {\em  Asymptotic stability and blow-up of solutions for semi-linear edge-degenerate parabolic equations with singular potentials}. Discrete Contin Dyn Syst, 2016, {\bf36}(2), 661--682 (2016).

\bibitem{CL2}
H. Chen, N. Liu. {\em  On the existence with exponential decay and the blow-up of solutions for coupled systems of semi-linear corner-degenerate parabolic equations with singular potentials.}
 Acta Math. Sci. Ser. B (Engl. Ed.)  {\bf41},  no. 1, 257--282   (2021).

\bibitem{CL} P. Cl\'ement, S. Li. {\em Abstract parabolic quasilinear equations and application to a groundwater flow problem.} Adv. Math. Sci. Appl. {\bf 3} (Special Issue), 17--32 (1993/94).


\bibitem{CSS0} S. Coriasco, E. Schrohe, J. Seiler. {\em Differential operators on conic manifolds: Maximal regularity and parabolic equations}. Bull. Soc. Roy. Sci. Li\`ege {\bf 70}, 207--229 (2001).

\bibitem{CSS1} 
S. Coriasco, E. Schrohe and J. Seiler. 
\textit{Realizations of differential operators on conic manifolds with boundary}. 
Ann. Global Anal. Geom. \textbf{31}, no. 3, 223--285 (2007).

\bibitem{CSS2} 
S. Coriasco, E. Schrohe and J. Seiler. 
\textit{Bounded $H_{\infty}$-calculus for differential operators on conic manifolds with boundary}. 
Comm. Partial Differential Equations \textbf{32}, no. 2, 229--255 (2007).

\bibitem{DHP} R. Denk, M. Hieber, J. Pr\"uss. {\em $R$-boundedness, Fourier multipliers and problems of elliptic and parabolic type}. Mem. Amer. Math. Soc. {\bf 166} (2003).

\bibitem{GM1}
J.B.  Gil and G.A. Mendoza. {\em Adjoints of elliptic cone operators}, Amer. J. Math. {\bf 125},
357--408 (2003).

\bibitem{GKM2} J. Gil, T. Krainer, G. Mendoza. {\em Geometry and spectra of closed extensions of elliptic cone operators.} Canad. J. Math. {\bf59}, no. 4, 742--794 (2007).

\bibitem{GKM1} J. Gil, T. Krainer, G. Mendoza. {\em Resolvents of elliptic cone operators.} J. Funct. Anal. {\bf 241}, no. 1, 1--55 (2006).


\bibitem{GHUV} D. Grieser, S. Held, H. Uecker, B. Vertman. {\em Phase transitions and minimal interfaces on manifolds with conical singularities}. Nonlinearity {\bf38}, No. 2, Article ID 025013, 30 p. (2025).

\bibitem{GMP18}G. Grillo, M. Muratori, F. Punzo.
{\em The porous medium equation with measure data on negatively curved Riemannian manifolds.}
 J. Eur. Math. Soc. (JEMS) {\bf20}, No. 11, 2769-2812 (2018). 

\bibitem{HVW}
T. Hyt\"onen, J. van Neerven, L. Weis. {\em Analysis in Banach spaces.} Vol. I. 
Martingales and Littlewood-Paley theory. Ergebnisse der Mathematik und ihrer Grenzgebiete. 3. Folge. 
Springer, Cham, 2016.

\bibitem{IMS} J. Isenberg, R. Mazzeo, N. Sesum. {\em Ricci flow on asymptotically conical surfaces with nontrivial topology.} J. Reine Angew. Math. {\bf676}, 227--248 (2013). 

\bibitem{K1967} 
V. A. Kondrat'ev. {\em Boundary value problems for elliptic equations in domains with conical or angular points.} {Tr. Mosk. Mat. Obs.} {\bf 16} (1967),  209--292.
%
\bibitem{KW04}
P. C. Kunstmann, L. Weis. Maximal $L_p$ -regularity for parabolic equations, Fourier multiplier theorems and $H_\infty$-functional calculus. {\em Functional Analytic Methods for Evolution Equations}, Lecture
Notes in Mathematics {\bf1855}, pp. 65--311, Springer Berlin Heidelberg (2004).
%
\bibitem{LSU} O.A. Lady\v zenskaya, V.A. Solonnikov and N.N Ural'ceva. {\em Linear and quasi-linear equations of parabolic type}. Amer. Math. Soc., Providence, RI, 1968.
%
\bibitem{LauterSeiler}R. Lauter, J. Seiler. {\em Pseudodifferential analysis on manifolds with boundary -- a comparison of $b$-calculus and cone algebra.} In Gil, Juan (ed.) et al., Approaches to singular analysis. Oper. Theory, Adv. Appl. {\bf125}, 131--166. Birkh\"auser Basel (2001).
%
\bibitem{Lesch} M. Lesch. {\em Operators of Fuchs type, conical singularities, and asymptotic methods}. Teubner-Texte Math. {\bf 136}, Teubner-Verlag (1997).
%
\bibitem{LR1} P. T. P. Lopes, N. Roidos. {\em Smoothness and long time existence for solutions of the Cahn-Hilliard equation on manifolds with conical singularities}. Monatshefte f\"ur Mathematik {\bf 197}, 677--716 (2022).
%
\bibitem{LR2} P. T. P. Lopes, N. Roidos. {\em Existence of global attractors and convergence of solutions for the Cahn-Hilliard equation on manifolds with conical singularities}. J. Math. Anal. Appl. {\bf 531}, no. 2, 127851 (2024). 


\bibitem{MP73} V. G. Maz'ya, V. G., B.A. Plamenevsk\u ii. {\em The behavior of the solutions of quasilinear elliptic boundary value problems in the neighborhood of a conical point}. (Russian) Zap. Nauchn. Sem. Leningrad. Otdel. Mat. Inst. Steklov. (LOMI) {\bf38}, 94--97 (1973).

\bibitem{MRS15} R. Mazzeo, Y. Rubinstein, N. Sesum. {\em Ricci flow on surfaces with conic singularities}. Anal. PDE {\bf 8}, no. 4, 839--882 (2015).

\bibitem{McI86} A. McIntosh. {\em Operators which have an $H_\infty$ functional calculus.} Miniconference on operator theory and partial differential equations (North Ryde, 1986), 210--231.
In: Proceedings of the Centre for Mathematical Analysis, Australian National University, vol. 14, Australian Naional University, Canberra 1986.

\bibitem{Mel93} R. Melrose. {\em The {Atiyah}-{Patodi}-{Singer} index theorem},, Research Notes in Mathematics {\bf 4},  A. K. Peters, Ltd., Wellesley, MA 1993.

\bibitem{P02} 
J. Pr\"uss.  {\em Maximal Regularity for Evolution Equations in $L_p$ -Spaces}. Lecture notes for the
Summer School `Positivity and Semigroups', Monopoli, September (2002). 

\bibitem{Ro1} N. Roidos. {\em Complex powers for cone differential operators and the heat equation on manifolds with conical singularities}. Proceedings of the Amer. Math. Soc. {\bf 146}, no. 7, 2995--3007 (2018).

\bibitem{RY} N. Roidos. {\em Conic manifolds under the Yamabe flow}. J. Evol. Equ. {\bf 20}, no. 2, 321--334 (2020).

\bibitem{RoSa} N. Roidos, A. Savas-Halilaj. {\em Curve shortening flow on Riemann surfaces with conical singularities}. Math. Ann. {\bf 390}, 2337--2411 (2024).

\bibitem{RS4} N. Roidos, E. Schrohe. {\em The Cahn-Hilliard equation and the Allen-Cahn equation on manifolds with conical singularities}. Comm. Partial Differential Equations {\bf 38}, no. 5, 925--943 (2013).

\bibitem{RS1} N. Roidos, E. Schrohe. {\em Bounded imaginary powers of cone differential operators on higher order Mellin-Sobolev spaces and applications to the Cahn-Hilliard equation}. J. Differential Equations {\bf 257}, no. 3, 611--637 (2014). 

\bibitem{RS2} N. Roidos, E. Schrohe. {\em Existence and maximal $L^{p}$-regularity of solutions for the porous medium equation on manifolds with conical singularities}. Comm. Partial Differential Equations {\bf41}, no. 9, 1441--1471 (2016).

\bibitem{RS3} N. Roidos, E. Schrohe. {\em Smoothness and long time existence for solutions of the porous medium equation on manifolds with conical singularities}. Comm. Partial Differential Equations {\bf43}, no. 10, 1456--1484 (2018).

\bibitem{RS5} N. Roidos, E. Schrohe. {\em Asymptotics of solutions to the porous medium equation near conical singularities.} {\tt arXiv:2503.02044} [math.AP] (2025). 

\bibitem{RSS} N. Roidos, E. Schrohe, J. Seiler. {\em Bounded $H_\infty$-calculus for boundary value problems on manifolds with conical singularities.} J. Differential Equations {\bf297}, 370--408 (2021)

\bibitem{RSh1} N. Roidos, Y. Shao. {\em The fractional porous medium equation on manifolds with conical singularities I.} J. Evol. Equ. {\bf 22}, no. 1 (2022).

\bibitem{RSh2} N. Roidos, Y. Shao. {\em The fractional porous medium equation on manifolds with conical singularities II.} Math. Nachr. {\bf296}, no. 4, 1616--1650 (2023).
				
\bibitem{S24} E. Schrohe. {\em Introduction to the Analysis on Manifolds with Conical Singularities}. In: M. Chatzakou et al. (eds.), Modern Problems in PDEs and Applications, Research Perspectives Ghent Analysis and PDE Center 4, Springer Verlag 2024.

\bibitem{ScSc95} E.\ Schrohe, B.-W.\ Schulze. {\em Boundary value problems in Boutet de Monvel's calculus for manifolds with conical singularities II}. In M. Demuth, E. Schrohe, B.-W. Schulze (eds.), Boundary Value Problems, Schr\"odinger Operators, Deformation Quantization, Math. Topics, Vol. 8: Advances in Part. Diff. Equ., Akademie-Verlag 1995. 

\bibitem{SS0} E. Schrohe, J. Seiler. {\em Ellipticity and invertibility in the cone algebra on $L_p$-Sobolev spaces}. Integral Equations Operator Theory {\bf41}, no. 1, 93--114  (2001). 

\bibitem{SS1} E. Schrohe, J. Seiler. {\em The resolvent of closed extensions of cone differential operators}. Canad. J. Math. {\bf 57}, no. 4, 771--811 (2005).

\bibitem{SS2} E. Schrohe, J. Seiler. {\em Bounded $H_\infty$-calculus for cone differential operators}. J. Evolution Equations {\bf 18}, 1395--1425 (2018). 

\bibitem{Schu}B.-W. Schulze. {\em Pseudo-Differential Operators on Manifolds with Conical Singularities}. Studies in Mathematics and Its Applications {\bf 24}, North-Holland Publishing Co. (1991).

\bibitem{SchulzeWiley} B.-W. Schulze. {\em Pseudo-differential boundary value problems, conical singularities, and asymptotics}. Mathematical Topics. 4. Berlin: Akademie Verlag. (1994). 


\bibitem{Tanabe79}H. Tanabe. {\em Equations of Evolution}. Monographs and Studies in Mathematics {\bf 6}, Pitman Publishing 1979.

\bibitem{V15} J. L. V\' azquez, {\em Fundamental solution and long time behavior of the porous medium equation in hyperbolic space,}
J. Math. Pures Appl. {\bf104} 454--484 (2015).

\bibitem{V07} J. V\'azquez.  {\em The Porous Medium Equation, Mathematical Theory}. Oxford
Mathematical Monographs. Oxford: Oxford University Press 2007. 

\bibitem{Ver}
B. Vertman. {\em Ricci de Turck flow on singular manifolds}. J. Geom. Anal. {\bf 31}, no. 4, 3351--3404 (2021).
\bibitem{Wa3}
L.~Waelbroeck.
\newblock {\em Topological Vector Spaces and Algebras}, volume 230 of {\em
  Lecture Notes in Math.}
\newblock Springer-Verlag, Berlin, Heidelberg, New York, 1971.


\bibitem{W} L. Weis. {\em Operator-valued Fourier multiplier theorems and maximal $L_{p}$-regularity.} Math. Ann. \textbf{319}, no. 4, 735-758 (2001). 

\end{thebibliography}
\end{document}